\numberwithin{equation}{section}
\newcommand{\la}{\lambda}
\newcommand{\al}{\alpha}
\newcommand{\be}{\beta}
\newcommand{\ga}{\gamma}
\newcommand{\ve}{\varepsilon}
\newcommand{\vp}{\varphi}
\newcommand{\R}{\mathbb{R}}
\newcommand{\N}{\mathbb{N}}
\newcommand{\om}{\omega}
\newcommand{\n}[1]{\Vert #1\Vert }
\newcommand{\bn}[1]{\big \Vert #1 \big \Vert }
\newcommand{\bbn}[1]{\Big\Vert #1 \Big \Vert }
\newcommand{\lr}[1]{\left\{ #1 \right\} }
\newcommand{\lrc}[1]{\left[ #1 \right] }
\newcommand{\lrs}[1]{\left( #1 \right) }
\newcommand{\lra}[1]{\langle #1 \rangle }
\newcommand{\wt}[1]{\widetilde{#1}}
\newcommand{\pa}{\partial}
\newcommand{\cf}{{\mathcal F}}
\begin{document}

\newtheorem{theorem}{Theorem}[section]
\newtheorem{lemma}[theorem]{Lemma}

\theoremstyle{definition}
\newtheorem{definition}[theorem]{Definition}
\newtheorem{example}[theorem]{Example}
\newtheorem{remark}[theorem]{Remark}

\numberwithin{equation}{section}

\newtheorem{proposition}[theorem]{Proposition}
\newtheorem{corollary}[theorem]{Corollary}
\newtheorem{goal}[theorem]{Goal}
\newtheorem{algorithm}{Algorithm}

\renewcommand{\figurename}{Fig.}

\title[Global well-posedenss of  Boltzmann Equation]{Sharp Global Well-posedness and Scattering of the Boltzmann Equation}

\author[X. Chen]{Xuwen Chen}
\address{Department of Mathematics, University of Rochester, Rochester, NY 14627, USA}
\email{xuwenmath@gmail.com}

\author[S. Shen]{Shunlin Shen}
\address{School of Mathematical Sciences, Peking University, Beijing, 100871, China}
\email{slshen@pku.edu.cn}

\author[Z. Zhang]{Zhifei Zhang}
\address{School of Mathematical Sciences, Peking University, Beijing, 100871, China}

\email{zfzhang@math.pku.edu.cn}

\subjclass[2010]{Primary 76P05, 35Q20, 35A01; Secondary 35B65, 82C40.}
\begin{abstract}
We consider the 3D Boltzmann equation for the Maxwellian particle and soft potential with an angular cutoff. We prove sharp global well-posedness with initial data small in the scaling-critical space. The solution also remains in $L^{1}$ if the initial datum is in $L^{1}$, even at such low regularity. The key to existence, uniqueness and regularity criteria is the new bilinear spacetime estimates for the gain term, the proof of which is based on novel techniques from nonlinear dispersive PDEs including the atomic $U$-$V$ spaces, multi-linear frequency analysis, dispersive estimates, etc. To our knowledge, this is the first 3D sharp global result for the Boltzmann equation.
 \end{abstract}
\keywords{Boltzmann equation, Global well-posedness, Atomic $U$-$V$ spaces, Soft potential, Maxwellian particles}
\maketitle
\tableofcontents

\section{Introduction}
The Boltzmann equation is a basic mathematical model in the collisional kinetic theory which describes the statistical evolution of a dilute gas. The Cauchy problem for the Boltzmann equation is of crucial importance for the physical interpretation and practical
application, and is thus one of the fundamental problems in kinetic theory. So far,
a large quantity of mathematical theories have been developed by using various methods for constructing solutions in different settings,
see for example \cite{alexandre2013local,alonso09distributional,ampatzoglou2022global,arsenio2011global,chen2019local,chen2019moments,chen2021small,chen2022well,chen2023well,diperna1989cauchy,
duan2016global,duan2018solution,guo2003classical,guo2003the,guo04boltzamnn,he2017well,he2023cauchy,illner84the,kaniel78the,alexandre2011the,alexandre2011thehard,alexandre2012the,alexandre2011global,chaturvedi2021stability,
duan2021global,gressman2011global,gressman2011sharp,imbert2022global}. Despite the significant progress, it remains an open problem to characterize the optimal regularity of initial data for the well-posedness including the global existence, uniqueness, continuity of the solution map, scattering, conservation laws, and etc. This is not only of mathematical and physical interests for perfection, but also an actual need for many related problems, such as
the derivation of the Boltzmann equation from the classical particle systems or quantum many-body dynamics, its hydrodynamic limit to fluid equations, and many others.
 In the paper, we investigate the sharp global well-posedness of the Boltzmann equation.

The general 3D Boltzmann equation takes the form
\begin{equation}\label{equ:Boltzmann}
\left\{
\begin{aligned}
\left( \partial_t + v \cdot \nabla_x \right) f (t,x,v) =& \int_{\mathbb{S}^{2}} \int_{\mathbb{R}^{3}} \lrc{f(v^{\ast })f(u^{\ast})-f(v)f(u)} B(u-v,\omega)dud\omega,\\
f(0,x,v)=& f_{0}(x,v),
\end{aligned}
\right.
\end{equation}
where $f(t,x,v)$ denotes the distribution function for the particles
at time $t\geq 0$, position $x\in \R^{3}$ and velocity $v\in \R^{3}$.
The variables $u$, $v$ can be regarded as pre-collision velocities for a pair of particles, $\om\in \mathbb{S}^{2}$ is a parameter for the deflection angle in the collision process, and the after-collision velocities $u^{*}$, $v^{*}$ are given by
\begin{align*}
u^{*}=u+\omega\cdot (v-u) \omega,\quad v^{*}=v-\omega\cdot(v-u)\omega.
\end{align*}
We adopt the usual shorthand $Q(f,g)$ to denote the nonlinear collision term of \eqref{equ:Boltzmann}, which is conventionally split into a gain term
and a loss term:
\begin{align}
Q(f,g)=&Q^{+}(f,g)-Q^{-}(f,g),\\
\text{(gain term)}\quad Q^{+}(f,g)=&\int_{\mathbb{S}^{2}} \int_{\mathbb{R}^{3}} f(v^{\ast })g(u^{\ast}) B(u-v,\omega)dud\omega,\\
\text{(loss term)} \quad Q^{-}(f,g)=&f A\lrc{g},\quad A\lrc{g}= \int_{\mathbb{S}^{2}} \int_{\mathbb{R}^{3}} g(u) B(u-v,\omega) du d\omega. \label{equ:loss term,A[g]}
\end{align}
Due to physical considerations of collision,
the Boltzmann collision kernel function $B(u-v,\omega)$ is a non-negative function depending only on the relative velocity $|u-v|$ and the deflection angle $\theta$ through $\cos \theta:=\frac{u-v}{|u-v|}\cdot \omega$. Throughout the paper, we consider
\begin{align}\label{equ:kernel function}
B(u-v,\omega)=|u-v|^{\ga}\textbf{b}(\cos \theta)
\end{align}
 under the Grad's angular cutoff assumption
\begin{align*}
0\leq  \textbf{b}(\cos \theta)\leq C|\cos\theta|.
\end{align*}
 The collision kernel \eqref{equ:kernel function} originates from the physical model of inverse-power law potentials and
 the different ranges $\ga<0$, $\ga=0$, $\ga>0$ correspond to soft potentials, Maxwellian molecules, and hard potentials, respectively. See also \cite{cercignani1988boltzmann,cercignani1994mathematical,villani2002review} for a more detailed physics background.

There have been many advancements of well-posedness theories requiring as less regularity as possible on the initial data.
 However, it is highly nontrivial to find the critical regularity of initial data for well-posedness. On the one hand,
the critical regularity for the Boltzmann equation is sometimes believed at $s=\frac{3}{2}$, the continuity threshold, see for example \cite{alexandre2013local,duan2016global,duan2021global,duan2018solution} for a more discussion. On the other hand,
from the scaling point of view,
 the Boltzmann equation \eqref{equ:Boltzmann} is invariant under the scaling
\begin{align}
f_{\la}(t,x,v)=\la^{\al+(2+\ga)\be}f(\la^{\al-\be}t,\la^{\al}x,\la^{\be}v),
\end{align}
for any $\al$, $\be\in \R$ and $\la>0$. Then in the $L^{2}$ setting, it holds that
\begin{align*}
\n{|\nabla_{x}|^{s}|v|^{r}f_{\la}}_{L_{xv}^{2}}=\la^{^{\al+(2+\ga)\be}}\la^{\al s-\be r}\la^{-\frac{3}{2}\al-\frac{3}{2}\be}
\n{|\nabla_{x}|^{s}|v|^{r}f}_{L_{xv}^{2}},
\end{align*}
which gives the scaling-critical index
\begin{align}\label{equ:scaling,L2}
s=\frac{1}{2},\quad  r=s+\ga.
\end{align}
That is, in term of scaling, a guiding principe,
one expects that the well/ill-posedness threshold in $H^{s}$ Sobolev space is $s_{c}=\frac{1}{2}$ with $r\geq 0$.

In a recent series of paper \cite{chen2019local,chen2019moments,chen2021small}, by adopting dispersive techniques on the study of the quantum many-body hierarchy dynamics, especially space-time collapsing/multi-linear estimates techniques (see for instance \cite{chen2015unconditional,
chen2014derivation,chen2013rigorous,chen2016focusing,chen2016collapsing,chen2016klainerman,chen2016correlation,
chen2019derivation,chen2022quantitative,chen2023derivation,herr2016gross,herr2019unconditional,
kirkpatrick2011derivation,klainerman2008uniqueness,sohinger2015rigorous}),
T. Chen, Denlinger, and Pavlovi$\acute{c}$ provided an alternate dispersive PDE based route for proving well-posedness of the Boltzmann equation and hierarchy.
With the introduction of dispersive techniques, the regularity index for local well-posedness, which is usually at least the continuity threshold $s>\frac{3}{2}$, has been improved to $s>1$ for both 3D Maxwellian molecules and hard potentials with cutoff in \cite{chen2019local}.
Unexpectedly in the scaling point of view, for the 3D constant kernel case, X. Chen and Holmer in \cite{chen2022well} found the well/ill-posedness threshold in $H^{s}$ Sobolev space was exactly at regularity $s=1$, and thus pointed out the actual optimal regularity for the global well-posedness problem. Subsequently, in our work \cite{chen2023well}, we moved forward from the special constant kernel case to investigate the general
kernel with soft potentials, and proved that the well/ill-posedness threshold was also $s=1$.

With the finding of this critical regularity for well-posedness, just like many other physically important equations \cite{danchin2000global,klainerman2003improved,
koch2001well,krieger2017global,smith2005sharp,wang2022rough}, a challenging problem for the Boltzmann
equation is whether or not one could prove the sharp global well-posedness even for small initial data.
Our main result provides an affirmative answer.

\begin{theorem}[Main Theorem]\label{thm:main theorem}
Let $s>1$ and $\ga\in[-\frac{1}{2},0]$.
There exists $\eta>0$, such that for all non-negative initial data $f_{0}$ satisfying the regularity condition that
\begin{align*}
\n{f_{0}}_{L_{v}^{2,s+\ga}H_{x}^{s}}:=\n{\lra{\nabla_{x}}^{s}\lra{v}^{s+\ga}f_{0}}_{L_{x,v}^{2}}<\infty,
\end{align*}
and the scaling-critical smallness condition that
\begin{align}\label{equ:gwp,smallness} \n{\lra{\nabla_{x}}^{\frac{1}{2}}\lra{v}^{\frac{1}{2}+\ga}f_{0}}_{L_{x,v}^{2}}\leq \eta,
\end{align}
the Boltzmann equation \eqref{equ:Boltzmann} is global well-posed in $C([0,\infty);L_{v}^{2,s+\ga}H_{x}^{s})$ and the solution scatters.
Furthermore, if $f_{0}\in L_{x,v}^{1}$, then for $t\in [0,\infty)$ we have
$$\n{f(t)}_{L_{x,v}^{1}}\leq \n{f_{0}}_{L_{x,v}^{1}}.$$
\end{theorem}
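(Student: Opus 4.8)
\emph{Strategy.} The plan is to solve \eqref{equ:Boltzmann} in its mild form
\begin{align*}
f(t)=e^{-tv\cdot\nabla_{x}}f_{0}+\int_{0}^{t}e^{-(t-\tau)v\cdot\nabla_{x}}\,Q(f,f)(\tau)\,\rd\tau
\end{align*}
by a contraction in a scaling-critical atomic $U$--$V$ type space $\mathcal{X}^{\frac12}$, adapted to the free-transport group $e^{-tv\cdot\nabla_{x}}$ and carrying the velocity weight $\lra{v}^{\frac12+\ga}$ (non-negative precisely because $\ga\ge-\frac12$), then to upgrade the solution to the higher-regularity class $\mathcal{X}^{s}$ (weight $\lra{v}^{s+\ga}$, $s$ derivatives in $x$) by persistence of regularity, and finally to extract positivity and the $L^{1}$ bound directly. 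The analytic engine, proved earlier in the paper, is the bilinear spacetime estimate for the collision term in two forms: the scaling-critical bound
\begin{align*}
\bbn{\int_{0}^{t}e^{-(t-\tau)v\cdot\nabla_{x}}Q^{\pm}(f,g)\,\rd\tau}_{\mathcal{X}^{\frac12}}\lesssim \n{f}_{\mathcal{X}^{\frac12}}\n{g}_{\mathcal{X}^{\frac12}},
\end{align*}
and the high--low version with $\mathcal{X}^{\frac12}$ on the left replaced by $\mathcal{X}^{s}$ and the right-hand side by $\n{f}_{\mathcal{X}^{s}}\n{g}_{\mathcal{X}^{\frac12}}+\n{f}_{\mathcal{X}^{\frac12}}\n{g}_{\mathcal{X}^{s}}$; the loss contribution $fA[g]$ is controlled by the same mechanism, $A[g]$ being a weighted linear functional of $g$ against the kernel.

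\emph{Global critical solution, uniqueness, scattering.} Granting the critical bilinear bound, the Duhamel map is, for $\eta$ small, a contraction on a ball of radius $\sim\eta$ in $\mathcal{X}^{\frac12}([0,\infty))$ by \eqref{equ:gwp,smallness}; this produces a unique global solution with $\n{f}_{\mathcal{X}^{\frac12}([0,\infty))}\lesssim\eta$, and the attendant Lipschitz estimate gives continuous dependence on $f_{0}$. Since $\mathcal{X}^{\frac12}\hookrightarrow C_{t}(L_{v}^{2,\frac12+\ga}H_{x}^{\frac12})$ and $e^{tv\cdot\nabla_{x}}f(t)=f_{0}+\int_{0}^{t}e^{\tau v\cdot\nabla_{x}}Q(f,f)(\tau)\,\rd\tau$ converges in that space as $t\to\infty$ (finiteness of the global Duhamel norm), the solution scatters, to $f_{+}=f_{0}+\int_{0}^{\infty}e^{\tau v\cdot\nabla_{x}}Q(f,f)\,\rd\tau$.

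\emph{Persistence of $\mathcal{X}^{s}$ regularity.} Feeding the high--low estimate with the already-secured global bound $\n{f}_{\mathcal{X}^{\frac12}}\lesssim\eta$, one finds on each $[0,T]$ that $\n{f}_{\mathcal{X}^{s}([0,T])}\lesssim\n{f_{0}}_{L_{v}^{2,s+\ga}H_{x}^{s}}+C\eta\,\n{f}_{\mathcal{X}^{s}([0,T])}$; for $\eta$ small the quadratic term is absorbed and $\n{f}_{\mathcal{X}^{s}([0,T])}$ is bounded uniformly in $T$ (a standard continuity/bootstrap argument makes this rigorous). As $\mathcal{X}^{s}\hookrightarrow C_{t}(L_{v}^{2,s+\ga}H_{x}^{s})$, this yields $f\in C([0,\infty);L_{v}^{2,s+\ga}H_{x}^{s})$; it is exactly the regularity criterion ``finite critical norm implies finite $\mathcal{X}^{s}$ norm'', now globalized by the smallness.

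\emph{Positivity, $L^{1}$ bound, and the main obstacle.} Turning the loss term into an integrating factor along characteristics gives
\begin{align*}
f(t,x,v)=&\,e^{-\int_{0}^{t}A[f](\sigma,x-(t-\sigma)v,v)\,\rd\sigma}\,f_{0}(x-tv,v)\\
&+\int_{0}^{t}e^{-\int_{\tau}^{t}A[f](\sigma,x-(t-\sigma)v,v)\,\rd\sigma}\,Q^{+}(f,f)(\tau,x-(t-\tau)v,v)\,\rd\tau,
\end{align*}
and since $f_{0}\ge0$ and $Q^{+}$ preserves positivity, the Picard iterates (equivalently, solutions of kernel-truncated problems converging in $\mathcal{X}^{\frac12}$) remain non-negative, hence $f\ge0$ a.e. Next, $e^{-tv\cdot\nabla_{x}}$ is a measure-preserving shear, so it is an isometry of $L^{1}_{x,v}$; if $f_{0}\in L^{1}_{x,v}$, the $L^{1}$-endpoint case of the bilinear estimate yields $Q^{\pm}(f,f)\in L^{1}_{\mathrm{loc}}([0,\infty);L^{1}_{x,v})$, which justifies the pre/post-collisional change of variables $\int_{\R^{6}}Q^{+}(f,f)=\int_{\R^{6}}fA[f]$ and, together with $\int_{\R^{6}}v\cdot\nabla_{x}f=0$, forces $\tfrac{d}{dt}\int_{\R^{6}}f=\int_{\R^{6}}Q(f,f)=0$; hence $\n{f(t)}_{L^{1}_{x,v}}=\n{f_{0}}_{L^{1}_{x,v}}$, in particular $\le$. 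The real difficulty is upstream, in the bilinear spacetime estimate for $Q^{+}$: because the collisional map entangles $x$ and $v$, one must simultaneously exploit transport dispersion in $x$ and the velocity weight to defeat the singular factor $|u-v|^{\ga}$ for $\ga\in[-\frac12,0]$ --- this is where the atomic $U$--$V$ spaces, dyadic/multilinear frequency decomposition, and dispersive $L^{2}$--$L^{\infty}$ bounds enter; within the present argument the remaining delicate points are keeping the Gronwall constant in the $\mathcal{X}^{s}$ estimate independent of $t$ (ensured by the global critical smallness) and legitimizing the collisional change of variables at this low regularity (made self-improving by the $L^{1}$-endpoint bound).
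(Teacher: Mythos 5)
Your proposal hinges on a scaling-critical bilinear spacetime estimate for the \emph{full} collision operator, i.e.\ you treat the loss term $Q^{-}(f,g)=fA[g]$ on the same footing as $Q^{+}$ inside a contraction in a critical space $\mathcal{X}^{\frac12}$ built on the weight $\lra{v}^{\frac12+\ga}$ and $H_{x}^{\frac12}$. That estimate is not available, and the paper's whole architecture exists to avoid it: the scaling-invariant estimate (Lemma \ref{lemma:bilinear estimate,gain term,gwp}) is proved for the gain term only, and the paper states explicitly that although $Q^{+}$ and $Q^{-}$ scale identically, their structures are different and they cannot share the same critical estimates. Concretely, controlling $Q^{-}(f,g)$ in $L_{t}^{1}L_{v}^{2,\frac12+\ga}H_{x}^{\frac12}$ requires something like $A[g]\in L_{t}^{2}L_{x}^{\infty}L_{v}^{\infty}$, and $A[g](v)=\int|u-v|^{\ga}g(u)\,du$ is only finite pointwise in $v$ when $\lra{v}^{r}g\in L_{v}^{2}$ with $r>\tfrac32+\ga$ (and the $L_{x}^{\infty}$ needs $s>1$, cf.\ Lemma \ref{lemma:A,f,Lx,estimate}); the critical norm with weight $\tfrac12+\ga$ and half a derivative in $x$ gives neither. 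So the very first step of your scheme (global contraction for the full equation in $\mathcal{X}^{\frac12}$), and likewise your high--low estimate with $Q^{\pm}$ and one factor measured only in $\mathcal{X}^{\frac12}$, has no proof and would fail by these means. The paper instead runs the contraction only for the gain-term-only equation (Proposition \ref{lemma:gwp,gain-only,small data}), uses the Kaniel--Shinbrot monotone iteration with $f^{+}$ as the upper barrier to build a global solution of the full equation carrying only integrability bounds (Proposition \ref{lemma:global existence,full}), proves uniqueness in an integrable class via an $L_{t}^{1}L_{v}^{2,s+\ga}L_{x}^{2}$ bilinear estimate (Lemma \ref{lemma:conditional uniqueness,boltzmann}), and then recovers and propagates the $H_{x}^{s}$, $s>1$, regularity through regularity criteria that use the subcritical bounds $M_{r}$, not the critical norm.

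Two further points. Your positivity argument via Picard iterates does not carry over to the full equation (the iterates are not obviously non-negative because of the sign of $Q^{-}$, and the exponential/integrating-factor formulation again needs control of $A[f]$ you do not have at critical regularity); in the paper positivity is an output of the monotone Kaniel--Shinbrot scheme. And your claim of exact $L^{1}$ conservation overshoots what can be justified here: verifying $Q^{\pm}(f,f)\in L^{1}_{x,v}$ and the collisional change of variables requires weighted bounds like $\lra{v}^{\frac32+}f\in L_{x}^{2}L_{v}^{2}$, which a general datum $f_{0}\in L^{1}\cap L_{v}^{2,s+\ga}H_{x}^{s}$ does not provide; the paper therefore truncates the data in $v$, proves conservation for the truncated solutions, and passes to the limit by Fatou, which is exactly why the theorem asserts only $\n{f(t)}_{L^{1}_{x,v}}\leq\n{f_{0}}_{L^{1}_{x,v}}$ rather than equality.
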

\begin{remark}
Theorem \ref{thm:main theorem} is sharp, as we have proven that for $s<1$, the Cauchy problem of the Boltzmann equation is ill-posed in \cite{chen2023well}. The range $\ga\in[-\frac{1}{2},0]$ is the endpoint of our method.
On the one hand, the scaling analysis \eqref{equ:scaling,L2} and the scaling-critical norm \eqref{equ:gwp,smallness} imply that $\frac{1}{2}+\ga\geq 0$. If not, the well-definiteness of the Boltzmann equation is a problem. On the other hand,
 our proof depends on a scaling-invariant estimate which does not work well if the Sobolev index is negative.

\end{remark}
Theorem \ref{thm:main theorem} is actually contained in the following theorem.
\begin{theorem}\label{thm:main theorem,2}
Let $s\in (1,\frac{3}{2})$ and $\ga\in[-\frac{1}{2},0]$. There exists $\eta>0$, such that for all non-negative initial data $f_{0}\in L_{v}^{2,s+\ga}H_{x}^{s}$ satisfying
\begin{align}
\n{\lra{\nabla_{x}}^{\frac{1}{2}}\lra{v}^{\frac{1}{2}+\ga}f_{0}}_{L_{x,v}^{2}}\leq \eta,
\end{align}
we have:
\begin{enumerate}[$(1)$]
\item $($Existence$)$
There exists a non-negative $C([0,\infty);L_{v}^{2,s+\ga}H_{x}^{s})$ solution $f(t)$ satisfying
\begin{align}
&\n{\lra{v}^{s+\ga}f}_{L_{t}^{\infty}(0,\infty;L_{v}^{2}L_{x}^{p})}<\infty ,\label{equ:f,Lx6,theorem}\\
&\n{\lra{v}^{s+\ga}Q^{+}(f,f)}_{L_{t}^{1}(0,\infty;L_{v}^{2}L_{x}^{p})}< \infty,\label{equ:Q+,f,Lx6,theorem}\\
&\n{A\lrc{f}}_{L_{t}^{2}(0,\infty;L_{x}^{\infty}L_{v}^{\infty})}<\infty,\label{equ:A,f,Lxinfty,theorem}\\
&\n{\lra{v}^{s+\ga}Q^{-}(f,f)}_{L_{t}^{1}(0,T;L_{v}^{2}L_{x}^{p})}\leq C(p,T),\label{equ:Q-,f,Lx6,theorem}
\end{align}
for all $p\in [2,\frac{6}{3-2s}]$ and all $T\in(0,\infty)$.
\item $($Uniqueness$)$ The solution $f(t)$ is unique in a larger class of all $C([0,T];L_{v}^{2,s+\ga}L_{x}^{2})$ solutions satisfying the integrability bounds \eqref{equ:f,Lx6,theorem}--\eqref{equ:Q-,f,Lx6,theorem} on $[0,T]$ with $p=6$.
\item $($Scattering$)$ The solution $f(t)$ scatters in $L_{v}^{2,s+\ga}L_{x}^{p}$ for all $p\in [2,\frac{6}{3-2s}]$. That is, there exists a function $f_{+\infty}\in L_{v}^{2,s+\ga}L_{x}^{p}$ such that
    \begin{align*}
    \lim_{t\to+\infty}\n{f(t)-S(t)f_{+\infty}}_{L_{v}^{2,s+\ga}L_{x}^{p}}=0,
    \end{align*}
    where $S(t)=e^{-tv\cdot\nabla_{x}}$.
\item $($Lipschitz continuity of the solution map$)$ The solution map
$$f_{0}\in L_{v}^{2,s+\ga}H_{x}^{s}\mapsto f\in C([0,T];L_{v}^{2,s+\ga}H_{x}^{s})$$
is Lipschitz\footnote{The solution map is actually analytic continuous as it comes from an argument of contraction map in our proof.} continuous.
\item $($Persistence of regularity$)$ Further suppose that $f_{0}\in L_{v}^{2,s+\ga+\be}H_{x}^{s+\al}$ for some $\al\geq 0$, $\be\geq 0$, then we have that
$f(t)\in C([0,\infty);L_{v}^{2,s+\ga+\be}H_{x}^{s+\al})$ and
\begin{align*}
&\n{\lra{\nabla_{x}}^{s+\al}\lra{v}^{s+\ga+\be}Q^{\pm}(f,f)}_{L_{t}^{1}(0,T;L_{x,v}^{2})}<\infty,
\end{align*}
for all $T\in[0,\infty)$.
\item $($Finite mass density$)$ Moreover, if $f_{0}\in L_{x,v}^{1}$, then for $t\in [0,\infty)$ we have
$$\n{f(t)}_{L_{x,v}^{1}}\leq \n{f_{0}}_{L_{x,v}^{1}}.$$
\end{enumerate}
\end{theorem}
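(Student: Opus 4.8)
The plan is to solve \eqref{equ:Boltzmann} in Duhamel form relative to the transport semigroup $S(t)=e^{-tv\cdot\nabla_x}$,
\[
f(t)=S(t)f_{0}+\int_{0}^{t}S(t-\tau)\big(Q^{+}(f,f)-Q^{-}(f,f)\big)(\tau)\,d\tau ,
\]
by a contraction mapping in a scaling-critical resolution space. First I would build the solution space as the intersection of a \emph{critical} piece $X_{c}$, an atomic $U^{2}$--$V^{2}$ type space adapted to $S(t)$ and weighted by $\lra{\nabla_{x}}^{1/2}\lra{v}^{1/2+\ga}$ so that its norm is invariant under the scaling behind \eqref{equ:scaling,L2}, with a \emph{sub-critical} piece $X_{s}$ weighted by $\lra{\nabla_{x}}^{s}\lra{v}^{s+\ga}$ carrying the regularity of the data, the two being chosen so that the auxiliary bounds \eqref{equ:f,Lx6,theorem}--\eqref{equ:Q-,f,Lx6,theorem} follow by embedding. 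The analytic inputs are: (a) the \emph{bilinear spacetime estimate} for the gain term, schematically $\n{Q^{+}(f,g)}_{L^{1}_{t}(\text{weighted }L^{2}_{v}L^{p}_{x})}\lesssim \n{f}_{X_{c}}\n{g}_{X_{c}}$ together with its mixed companion $\n{Q^{+}(f,g)}\lesssim \n{f}_{X_{c}}\n{g}_{X_{s}}+\n{f}_{X_{s}}\n{g}_{X_{c}}$; and (b) the loss-term bound $\n{A[g]}_{L^{2}_{t}L^{\infty}_{x,v}}\lesssim \n{g}_{X_{c}}$ (cf.\ \eqref{equ:A,f,Lxinfty,theorem}), which combined with H\"older in $t$ and the dispersive/velocity-averaging (Strichartz) inequalities for $S(t)$ controls $Q^{-}(f,g)=fA[g]$. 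Granting (a) and (b), the smallness \eqref{equ:gwp,smallness} makes the Duhamel map a contraction on a ball in $X_{c}\cap X_{s}$ \emph{globally} in $t$: the critical norm closes on $[0,\infty)$ and the sub-critical norm is then propagated linearly, which yields the solution in $C([0,\infty);L_{v}^{2,s+\ga}H_{x}^{s})$ and the bounds \eqref{equ:f,Lx6,theorem}--\eqref{equ:Q-,f,Lx6,theorem}. Non-negativity of the constructed $f$ is recovered by showing it also solves the exponential Kaniel--Shinbrot mild formulation: with integrating factor $E(\tau,t,x,v)=\exp\big(-\int_{\tau}^{t}A[f](\sigma,x-(t-\sigma)v,v)\,d\sigma\big)\in(0,1]$,
\[
f(t,x,v)=E(0,t,x,v)\,f_{0}(x-tv,v)+\int_{0}^{t}E(\tau,t,x,v)\,Q^{+}(f,f)\big(\tau,x-(t-\tau)v,v\big)\,d\tau ,
\]
whose right-hand side is manifestly $\geq 0$ when $f_{0}\geq 0$.

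For uniqueness (2), I would take two solutions $f,g$ in the larger class, each obeying \eqref{equ:f,Lx6,theorem}--\eqref{equ:Q-,f,Lx6,theorem} with $p=6$, write the difference equation via bilinearity $Q^{\pm}(f,f)-Q^{\pm}(g,g)=Q^{\pm}(f-g,f)+Q^{\pm}(g,f-g)$, partition $[0,T]$ into finitely many subintervals on which all the spacetime norms of $f,g$ appearing in (a)--(b) are small (possible by absolute continuity of those integrals), and on each subinterval close a contraction for $f-g$ in the $p=6$ version of the resolution norm; iterating over the subintervals forces $f\equiv g$. Scattering (3) is immediate from the global bounds: $\int_{0}^{\infty}S(-\tau)\big(Q^{+}(f,f)-Q^{-}(f,f)\big)(\tau)\,d\tau$ converges in $L_{v}^{2,s+\ga}L_{x}^{p}$ for $p\in[2,\tfrac{6}{3-2s}]$, so setting $f_{+\infty}:=f_{0}+\int_{0}^{\infty}S(-\tau)(Q^{+}-Q^{-})(\tau)\,d\tau$ gives $f(t)-S(t)f_{+\infty}=-\int_{t}^{\infty}S(t-\tau)(Q^{+}-Q^{-})(\tau)\,d\tau\to 0$ as the tail of a convergent Bochner integral. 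Lipschitz continuity of the solution map (4) is the standard by-product of the contraction estimate (differences of fixed points controlled by differences of data), lifted to the $H^{s}_{x}$ topology by the mixed bilinear bound; analytic dependence (the footnote) follows since the fixed point depends analytically on a parameter inserted into the contraction.

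Persistence of regularity (5) I would obtain by applying $\lra{\nabla_{x}}^{\al}\lra{v}^{\be}$ to the Duhamel formula and using a weighted, higher-regularity version of the bilinear estimate that distributes the extra $x$-derivatives and $v$-weights onto the two factors — a Leibniz-type splitting after dyadic frequency localization, assigning the high output frequency or large velocity to one input while the other remains at critical regularity; the smallness of the critical norm then keeps $\n{\lra{\nabla_{x}}^{s+\al}\lra{v}^{s+\ga+\be}f}$ finite on all of $[0,\infty)$ along with the claimed bound on $Q^{\pm}$. Finite mass (6) follows from the exponential mild formulation above (hence $f\geq 0$) together with the classical collisional identity $\int_{\R^{3}\times\R^{3}}Q^{+}(f,f)\,dx\,dv=\int_{\R^{3}\times\R^{3}}Q^{-}(f,f)\,dx\,dv$ coming from the unit-Jacobian involution $(u,v,\om)\mapsto(u^{*},v^{*},\om)$, which — once justified at this regularity via the a priori bounds and a density reduction to smooth $f_{0}\in L^{1}_{x,v}\cap L_{v}^{2,s+\ga}H_{x}^{s}$ — gives $\tfrac{d}{dt}\n{f(t)}_{L^{1}_{x,v}}\leq 0$, i.e.\ $\n{f(t)}_{L^{1}_{x,v}}\leq \n{f_{0}}_{L^{1}_{x,v}}$.

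The main obstacle is input (a): proving the scaling-critical bilinear spacetime estimate for $Q^{+}$ against the transport propagator, which is genuinely borderline because the soft-potential factor $|u-v|^{\ga}$ with $\ga\in[-\tfrac12,0]$ sits exactly at the edge of integrability and the target is an endpoint $L^{1}_{t}$ norm. This forces a phase-space/Fourier analysis in which the collision geometry $v^{*}=v-\om\cdot(v-u)\om$, the angular cutoff $\mathbf b(\cos\theta)$, and the transport phase $e^{-itv\cdot\xi}$ are resolved simultaneously: one dyadically decomposes in the output frequency, in the transferred momentum, and in $|u-v|$, extracts the gain of integrability from kinetic dispersive/velocity-averaging estimates for $S(t)$, and sums the pieces — the atomic $U^{p}$--$V^{p}$ spaces being exactly what makes the endpoint summation and the logarithmically divergent borderline cases close. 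Everything else above is the technical but conceptually routine fixed-point and bootstrap scaffolding erected on top of this estimate and its weighted variant.
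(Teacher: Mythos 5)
There is a genuine gap at the first and decisive step, the global existence. Your plan is to run a single global-in-time contraction for the \emph{full} equation in a critical-plus-subcritical space, and the input you need for the loss term is your item (b), an estimate of the form $\n{A[g]}_{L_{t}^{2}L_{x}^{\infty}L_{v}^{\infty}}\lesssim \n{g}_{X_{c}}$. No such bound is available at critical regularity, and this is exactly the obstruction the paper's architecture is designed to avoid: although $Q^{+}$ and $Q^{-}$ scale identically, the loss term $Q^{-}(f,g)=fA[g]$ has none of the Bobylev $\xi^{\pm}$-mixing structure that underlies the convolution-type estimate of Lemma \ref{lemma:Q+,bilinear estimate} and the scaling-invariant $U_{L}^{2}$ bilinear estimate of Lemma \ref{lemma:bilinear estimate,gain term,gwp}, so it cannot share the gain term's critical estimates. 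The only control of $A[f]$ in $L_{t}^{2}L_{x}^{\infty}L_{v}^{\infty}$ proved in the paper (Lemma \ref{lemma:A,f,Lx,estimate}) requires the subcritical regularity $s>1$ and is stated in terms of $\n{\lra{\nabla_{x}}^{s}\lra{v}^{s+\ga}\mathcal{N}[f]}_{L_{t}^{1}L_{x,v}^{2}}$, i.e.\ it presupposes an $L_{t}^{1}$ subcritical bound on the full nonlinearity including $Q^{-}$ itself; moreover every estimate of $Q^{-}$ goes through H\"older in time and produces a factor $|T|^{\frac12}$, which is precisely why \eqref{equ:Q-,f,Lx6,theorem} is only local with a $T$-dependent constant while \eqref{equ:Q+,f,Lx6,theorem} is global. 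Consequently a fixed-point map containing $Q^{-}$ cannot be closed globally in time, and cannot be closed at critical regularity at all; the ``globally in $t$'' contraction you assert does not exist with the tools at hand.

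The paper instead contracts only the gain-term-only equation globally in the $U_{L}^{2}$-based critical space (Proposition \ref{lemma:gwp,gain-only,small data}), then constructs the solution of the full equation by the monotone Kaniel--Shinbrot iteration with the gain-only solution $f^{+}$ as upper barrier, which simultaneously yields non-negativity and the integrability bounds \eqref{equ:f,Lx6,theorem}--\eqref{equ:Q-,f,Lx6,theorem} by domination; regularity is then recovered a posteriori via the uniqueness lemma in an integrable class (Lemma \ref{lemma:conditional uniqueness,boltzmann}), the strong local theory, and the regularity criteria of Lemma \ref{lemma:regularity criteria}. Your treatment of uniqueness, scattering, persistence, and finite mass is broadly compatible in spirit with that scaffolding (the paper proves finite mass by truncating the data in $v$ and passing to the limit with Fatou, close to your density reduction), but without repairing the existence step --- either by adopting the gain-only-plus-Kaniel--Shinbrot route or by supplying a genuinely new critical, global-in-time estimate for the loss term --- the proposal does not go through.
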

Using the global well-poseness and persistence of regularity, we immediately have the following corollary for smooth initial data with respect to the spatial variable.
\begin{corollary}
Smooth datum subject to the smallness $\eqref{equ:gwp,smallness}$ generates a global smooth solution which scatters. That is, if $f_{0}\in \bigcap_{\al>0}L_{v}^{2,r}H_{x}^{\al}$ with $r>1+\ga$, then
$$f(t)\in \bigcap_{\al>0}C([0,\infty);L_{v}^{2,r}H_{x}^{\al}).$$
\end{corollary}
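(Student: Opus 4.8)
The plan is to read the corollary off Theorem \ref{thm:main theorem,2}, the only actual work being a choice of admissible exponents. First I would fix, once and for all, an index $s\in(1,\tfrac32)$ with $s+\gamma<r$; such an $s$ exists precisely because the hypothesis $r>1+\gamma$ makes the interval $\bigl(1,\min\{\tfrac32,\,r-\gamma\}\bigr)$ nonempty. Since $f_{0}\in\bigcap_{\alpha>0}L_{v}^{2,r}H_{x}^{\alpha}$ and $s+\gamma\le r$, monotonicity of the norm in the velocity weight gives $f_{0}\in L_{v}^{2,s+\gamma}H_{x}^{s}$, and by assumption $f_{0}$ obeys the smallness \eqref{equ:gwp,smallness} with the threshold $\eta=\eta(s,\gamma)$ furnished by Theorem \ref{thm:main theorem,2}. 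Part $(1)$ of that theorem then produces a global nonnegative solution $f\in C([0,\infty);L_{v}^{2,s+\gamma}H_{x}^{s})$, and part $(3)$ shows it scatters.

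Next I would bootstrap the regularity via part $(5)$. Fix an arbitrary $\alpha_{0}>0$ and set $\alpha:=\max\{\alpha_{0}-s,\,0\}\ge0$ and $\beta:=r-s-\gamma>0$, so that $s+\alpha\ge\alpha_{0}$ and $s+\gamma+\beta=r$. As $f_{0}\in\bigcap_{\alpha'>0}L_{v}^{2,r}H_{x}^{\alpha'}$, in particular $f_{0}\in L_{v}^{2,s+\gamma+\beta}H_{x}^{s+\alpha}$; hence part $(5)$ yields $f\in C([0,\infty);L_{v}^{2,r}H_{x}^{s+\alpha})$ together with the local-in-time bound for $\langle\nabla_{x}\rangle^{s+\alpha}\langle v\rangle^{r}Q^{\pm}(f,f)$ on every $[0,T]$. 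Since $s+\alpha\ge\alpha_{0}$ and the $v$-weight is unchanged, $C([0,\infty);L_{v}^{2,r}H_{x}^{s+\alpha})\hookrightarrow C([0,\infty);L_{v}^{2,r}H_{x}^{\alpha_{0}})$, so $f\in C([0,\infty);L_{v}^{2,r}H_{x}^{\alpha_{0}})$; letting $\alpha_{0}>0$ run gives $f\in\bigcap_{\alpha>0}C([0,\infty);L_{v}^{2,r}H_{x}^{\alpha})$, that is, a global solution smooth in $x$ (Sobolev embedding). Running the same Duhamel/fixed-point scheme that underlies parts $(3)$ and $(5)$ with the weights and derivatives just gained upgrades the scattering to $L_{v}^{2,r}H_{x}^{\alpha}$ for every $\alpha>0$.

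There is no genuine obstacle here: the corollary is an immediate consequence of the existence, persistence-of-regularity and scattering assertions already established, and the only point requiring (minor) care is the exponent bookkeeping — using $r>1+\gamma$ to secure an admissible $s\in(1,\tfrac32)$ and then solving $s+\gamma+\beta=r$ and $s+\alpha\ge\alpha_{0}$ in $\beta,\alpha\ge0$. All the analytic substance — the bilinear spacetime estimate for $Q^{+}$, the atomic $U$-$V$ space machinery, and the contraction argument — has already been spent in proving Theorem \ref{thm:main theorem,2}.
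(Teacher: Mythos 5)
Your proposal is correct and is essentially the paper's argument: the paper derives the corollary immediately from Theorem \ref{thm:main theorem,2} (global well-posedness plus persistence of regularity), and your only added content is the straightforward exponent bookkeeping — choosing $s\in(1,\min\{\tfrac32,r-\ga\})$, which is nonempty since $r>1+\ga$, and then $\be=r-s-\ga\geq 0$, $\al=\max\{\al_{0}-s,0\}$ — which is exactly what the paper leaves implicit. The final remark about upgrading scattering to $L_{v}^{2,r}H_{x}^{\al}$ goes beyond what the corollary asserts and is not needed.
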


Continuing a great deal of efforts such as \cite{alexandre2013local,alonso09distributional,arsenio2011global,chen2019local,chen2019moments,chen2021small,chen2022well,chen2023well,diperna1989cauchy,
duan2016global,duan2018solution,guo2003classical,guo2003the,guo04boltzamnn,he2017well,he2023cauchy,illner84the,kaniel78the} devoted to the well-posedness theory of the Boltzmann equation with an angular cutoff\footnote{The non-cutoff case is of equally importance and many nice developments have been achieved, see for example \cite{alexandre2011the,alexandre2011thehard,alexandre2012the,alexandre2011global,chaturvedi2021stability,
duan2021global,gressman2011global,gressman2011sharp,imbert2022global}.}, Theorems \ref{thm:main theorem}--\ref{thm:main theorem,2} establish a sharp global well-posedness with small initial data in the
scaling-critical space for both Maxwellian molecules and soft potential cases.\footnote{The hard potential case is also interesting and the global well/ill-posedness results remain open. However, it needs a different working space even for the local well-posedness, see \cite{chen2019local}. Hence, it requires new ideas to deal with these problems which we put for further work.} To the best of our knowledge, this is the first 3D sharp global result.

\subsection{Outline of Proof}
In \cite{chen2023well}, we have proved a sharp local well-posedness in the $L_{v}^{2,s+\ga}H_{x}^{s}$ space for $s>1$ and hence provided a blow-up criterion that
\begin{align}
\lim_{t\nearrow T(f_{0})}\n{f_{loc}(t)}_{L_{v}^{2,s+\ga}H_{x}^{s}}=\infty,
\end{align}
where $T(f_{0})$ is the lifespan.
To obtain a global result,
it suffices to establish a priori regularity bound $L_{t}^{
\infty}(0,T(f_{0});L_{v}^{2,s+\ga}H_{x}^{s})$ on the local strong solution $f_{loc}(t)$. The strategy we take is divided into the following three steps.
\begin{enumerate}[\textbf{Step} 1.]
\item
Construct a global solution $f(t)$ to the Boltzmann equation. In the step, the regularity bound is not required but some good decay and integrability properties are needed for a subsequent analysis.
\item
Establish a uniqueness theory to prove that the global solution $f(t)$ coincides with the strong local solution $f_{loc}(t)$. Therefore, the global solution $f(t)$ recovers the regularity at least for a short time.
\item
 Provide a regularity criterion to prove the persistence of regularity for the global solution. Once the regularity bound of $f(t)$ is set up, by the blow-up criterion and uniqueness theorem, we conclude that $T(f_{0})=\infty$ and hence obtain the global well-posedness.

\end{enumerate}

Due to the complexity of the collision kernel, it is quite hard to solve the Boltzmann equation at critical regularity. Though the gain term and the loss term scale
the same way, they have totally different structures and hence cannot share the same critical estimates.
To beat it, we make use of a classical technique, the Kaniel--Shinbrot iteration \cite{kaniel78the}, the main point of which is to solve the gain-term-only Boltzman equation.
This method has been successful in many work such as \cite{chen2021small,he2023cauchy,illner84the}. Especially in \cite{chen2021small}, with
a novel application of this iteration scheme, the global well-posedness of 2D Boltzmann equation with a constant collision
kernel is solved for small $L_{x,v}^{2}$-critical initial data and the result is actually sharp with the ill-posedness results in \cite{chen2022well,chen2023well} and some other tools in this paper. For the physical 3D problem, to obtain the global well-posedness of the gain-term-only Boltzmann equation, we prove a completely new 3D scaling-invariant bilinear estimate for the gain term. As it requires derivatives to be scaling-invariant, the related harmonic analysis nitty-gritty technicalities come in and the proof highly relies on the latest dispersive techniques.\footnote{As shown in \cite[p.138]{cercignani1994mathematical}, it has been proved in \cite{andreasson2004blowup,illner87blowup} that the $L^1$ norm for the 3D gain-term-only Boltzmann equation might blow up and hence hinders the application of the Kaniel-Shinbrot iteration for general data. That is not the case here. Our low regularity solution for \eqref{equ:Boltzmann} stays in $L^1$ provided that the initial datum is in $L^1$ as a consequence of well-posedness in a Strichartz-type space, though the solution to the gain-term-only equation may not stay in $L^{1}$. This also addresses the question raised in \cite[Remark 1.5]{chen2021small}.}

In Section \ref{section:atomic U-V spaces}, we introduce an updated dispersive technique, the atomic $U$--$V$ spaces, which was first developed by Koch and Tataru \cite{koch05dispersive,koch07priori} and played a key role in solving critical problems. Apart from this, one important observation is that the energy conservation provides
a lower bound estimate for after-collision velocities, which enables the application of the
Littlewood-Paley theory and multi-linear frequency analysis techniques.
In Section \ref{section:Scaling-invariant Bilinear Estimate for Gain Term}, we give a subtle frequency analysis on the gain term and set up the scaling-invariant bilinear estimate
 with the help of a convolution type estimate and Strichartz estimates. Finally in Section \ref{section:Small Data Global Well-posedness for Gain-term-only Boltzmann}, we complete the proof of the global well-posedness of the gain-term-only Boltzmann equation with small initial data in the scaling-critical space.
To our knowledge, this seems to be the 1st application of the atomic $U$-$V$ spaces techniques on the study of the well-posedness of the Boltzmann equation, and we believe that the approach would be helpful in many related problems in different settings, as these spaces have made the estimates more unified and the format cleaner.

In Section \ref{section:Global Existence of the Boltzmann Equation}, we prove the global existence and scattering of the Boltzmann equation by using
the Kaniel--Shinbrot iterative method. The idea is to put the solution of the gain-term-only Boltzmann equation as an upper bound of the beginning condition in the iteration scheme. However, a key point of this method, the uniqueness of the limiting equation, cannot be easily obtained. As the limiting point only enjoys integrability bounds instead of regularity bounds, the uniqueness must hold in some integrable class. In Section \ref{section:uniqueness}, we establish the uniqueness theorem. As we work in the integrable class, an $L_{t}^{1}L_{v}^{2,s+\ga}L_{x}^{2}$ bilinear estimate which carries no regularity, plays an important role.

In Section \ref{section:Strong Solution and Blow-up Criteria}, we state the strong local well-posedness in the setting of atomic $U$-$V$ spaces and give the blow-up criterion. In Section \ref{section:Persistence of Regularity}, to propagate the regularity for the Boltzmann equation, we provide the regularity criteria based on the integrability bounds, one novel application of which is to solve \cite[Conjecture 1.1]{chen2021small}.\footnote{See Remark \ref{remark:conjecture} for details.}
The key is a new $L_{t}^{1}L_{v}^{2,s+\ga}H_{x}^{s}$ bilinear estimate for the gain term, the proof of which also highly relies on the frequency analysis techniques like in Section \ref{section:Scaling-invariant Bilinear Estimate for Gain Term}.

Putting together all the results in Sections \ref{section:Global Existence of the Boltzmann Equation}--\ref{section:Persistence of Regularity}, we finish the proof of Theorems \ref{thm:main theorem}--\ref{thm:main theorem,2}.

\section{The Gain-term-only Boltzmann Equation} \label{section:The Gain-term-only Boltzmann Equation}
In the section, we will take dispersive techniques to deal with the gain-term-only Boltzmann equation:
\begin{equation}\label{equ:Boltzmann,gain}
\left\{
\begin{aligned}
\left( \partial_t + v \cdot \nabla_x \right) f (t,x,v) =& Q^{+}(f,f),\\
f(0,x,v)=& f_{0}(x,v).
\end{aligned}
\right.
\end{equation}
To draw a connection between the analysis of \eqref{equ:Boltzmann,gain} and the theory of nonlinear
dispersive PDEs, we
take the inverse $v$-variable Fourier transform on both side of \eqref{equ:Boltzmann,gain} to get
\begin{align}
i\pa_{t}\wt{f}+\nabla_{\xi}\cdot \nabla_{x}\wt{f}=i\mathcal{F}_{v\mapsto \xi}^{-1}\lrc{Q^{+}(f,f)},
\end{align}
where $\wt{f}(t,x,\xi)=\mathcal{F}^{-1}_{v\mapsto \xi}(f)$.
The linear part of \eqref{equ:Boltzmann,gain} changes into the symmetric hyperbolic Schr\"{o}dinger equation and hence gives Strichartz estimates (see the Appendix
\ref{section:Strichartz Estimates}) that
\begin{align}
\n{e^{it\nabla_{\xi}\cdot \nabla_{x}}\wt{f}_{0}}_{L_{t}^{q}L_{x\xi}^{p}}\lesssim \n{\wt{f}_{0}}_{L_{x\xi}^{2}},\quad \frac{2}{q}+\frac{6}{p}=3,\quad q\geq 2.
\end{align}
For the nonlinear part of \eqref{equ:Boltzmann,gain},
by the well-known Bobylev identity for a general case, see for example \cite{alexandre2000entropy,desvillettes2003use}, it holds that (up to an unimportant constant)
\begin{align}
\mathcal{F}_{v\mapsto \xi}^{-1}\lrc{Q^{+}(f,g)}(\xi)=&\int_{\R^{3}\times \mathbb{S}^{2}}\frac{\wt{f}(\xi^{+}+\eta)\wt{g}(\xi^{-}-\eta)}{|\eta|^{3+\ga}}
\textbf{b}(\frac{\xi}{|\xi|}\cdot \omega)d\eta d\omega,
\end{align}
where $\xi^{+}=\frac{1}{2}\lrs{\xi+|\xi|\omega}$ and $\xi^{-}=\frac{1}{2}\lrs{\xi-|\xi|\omega}$.
For convenience, we take the notation that $\wt{Q}^{+}(\wt{f},\wt{g})=\mathcal{F}_{v\mapsto \xi}^{-1}\lrc{Q^{+}(f,g)}$.

In Section \ref{section:atomic U-V spaces}, we introduce the atomic $U$-$V$ spaces techniques into the analysis of the Boltzmann equation. In Section \ref{section:Scaling-invariant Bilinear Estimate for Gain Term}, we establish a scaling-invariant bilinear estimate for the gain term, which is the key to the global well-posedness of the gain-term-only Boltzmann equation \eqref{equ:Boltzmann,gain}. Finally in Section \ref{section:Small Data Global Well-posedness for Gain-term-only Boltzmann}, we conclude the well-posedness of \eqref{equ:Boltzmann,gain} with small initial data in the scaling-critical space.

\subsection{Atomic $U$-$V$ Spaces}\label{section:atomic U-V spaces}
We give a brief introduction to the atomic $U$ spaces introduced by Koch and Tataru \cite{koch05dispersive,koch07priori} and
the $V$ spaces of bounded $p$-variation of Wiener \cite{wiener24the}. Their properties have been further elaborated in \cite{hadac09well,koch14dispersive}.

Let $\mathcal{Z}$ be the set of finite partitions $-\infty<t_{0}<t_{1}<...<t_{K}\leq +\infty$ of the real line.
If $t_{K}=+\infty$, we use the convention that $u(t_{K}):=0$ for all functions $u:\R\mapsto H$, where $H$ is a Hilbert space.
\begin{definition}
Let $1\leq p<\infty$. For $\lr{t_{k}}_{k=0}^{K}\in \mathcal{Z}$ and $\lr{\phi_{k}}_{k=0}^{K-1}\subset H$ with
$\sum_{k=0}^{K-1}\n{\phi_{k}}_{H}^{p}=1$ we call the step function
\begin{align}
a=\sum_{k=1}^{K}\chi_{[t_{k-1},t_{k})}\phi_{k-1}
\end{align}
a $U^{p}$-atom and define the atomic space $U^{p}(\R;H)$ of all functions $u:\R\mapsto H$ such that
\begin{align}
u=\sum_{j=1}^{\infty}\la_{j}a_{j},\quad \text{for $U^{p}$-atoms $a_{j}$, $\lr{\la_{j}}\in l^{1}$,}
\end{align}
with norm
\begin{align}
\n{u}_{U^{p}(\R;H)}:=\inf\lr{
\sum_{j=1}^{\infty}|\la_{j}|:u=\sum_{j=1}^{\infty}\la_{j}a_{j},\ \la_{j}\in \mathbb{C},\ a_{j}\ \text{$U^{p}$-atom}}.
\end{align}
\end{definition}
\begin{definition}
 Define $V^{p}(\mathbb{R};H)$ as the space of all functions $u:\mathbb{R}\mapsto H$ such that
\begin{align}
\n{u}_{V^{p}(\mathbb{R};H)}=\sup_{\lr{t_{k}}_{k=0}^{K}\in \mathcal{Z}}\lrs{\sum_{k=1}^{K}\n{u(t_{k+1})-u(t_{k})}_{H}^{p}}^{\frac{1}{p}}
\end{align}
is finite.
\end{definition}

 We will work exclusively with the variants $U_{L}^{p}(\mathbb{R};H)$, $V_{L}^{p}(\mathbb{R};H)$ defined as the
norms $U_{L}^{p}(\mathbb{R};H)$, $V_{L}^{p}(\mathbb{R};H)$, respectively, after pulling-back by a linear flow $U(t)$.
\begin{definition}\label{def:UL,VL}
Let $U_{L}^{p}(\mathbb{R};H)$, $V_{L}^{p}(\mathbb{R};H)$ be the space of all functions $u:\R\mapsto H$ such that $t\mapsto U(-t)u(t)$ is in $U^{p}(\mathbb{R};H)$, $V^{p}(\mathbb{R};H)$ respectively, with norms
\begin{align}
\n{u}_{U_{L}^{p}(\mathbb{R};H)}=\n{U(-t)u}_{U_{L}^{p}(\mathbb{R};H)},\quad
\n{u}_{V_{L}^{p}(\mathbb{R};H)}=\n{U(-t)u}_{V_{L}^{p}(\mathbb{R};H)}.
\end{align}
\end{definition}

In our setting, we take $H=L_{x,\xi}^{2}$, $U(t)=e^{it\nabla_{\xi}\cdot \nabla_{x}}$.
We provide the Strichartz estimates which will be used to establish the scaling-invariant bilinear estimate for the gain term.

\begin{proposition}[Strichartz estimate]\label{lemma:strichartz estimate,U-V}
Let $(q,p)$ be a pair satisfying
$$\frac{2}{q}+\frac{6}{p}=3,\quad q\geq 2.$$
Then we have the Strichartz estimates
\begin{align}\label{equ:strichartz estimate,U}
\n{\wt{f}}_{L_{t}^{q}L_{x,\xi}^{p}}\lesssim \n{\wt{f}}_{U_{L}^{q}L_{x,\xi}^{2}},
\end{align}
and
\begin{align}\label{equ:strichartz estimate,duhamel,V}
\bbn{\int_{0}^{t}U(t-\tau)\mathcal{N}[\wt{f}](\tau)d\tau}_{V_{L}^{q'}(0,\infty;L_{x,\xi}^{2})}\lesssim \n{\mathcal{N}[\wt{f}]}_{L_{t}^{q'}(0,\infty;L_{x,\xi}^{p'})},
\end{align}
where $q'=\frac{q}{q-1}$ and $p=\frac{p}{p-1}$.
Especially, we have
\begin{align}\label{equ:strichartz estimate,duhamel,U}
\bbn{\int_{0}^{t}U(t-\tau)\mathcal{N}[\wt{f}](\tau)d\tau}_{U_{L}^{2}(0,\infty;L_{x,\xi}^{2})}\lesssim \n{\mathcal{N}[\wt{f}]}_{L_{t}^{1}(0,\infty;L_{x,\xi}^{2})}.
\end{align}
\end{proposition}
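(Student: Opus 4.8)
The plan is to treat all three estimates as consequences of (i) the linear Strichartz estimate for the propagator $U(t)=e^{it\nabla_{\xi}\cdot\nabla_{x}}$ on $H=L^{2}_{x,\xi}$, namely $\n{U(t)g}_{L^{q}_{t}L^{p}_{x,\xi}}\lesssim\n{g}_{L^{2}_{x,\xi}}$ for $\tfrac{2}{q}+\tfrac{6}{p}=3$, $q\geq2$, which is established in Appendix~\ref{section:Strichartz Estimates} from the $|t|^{-3}$ dispersive decay of the (explicitly computable) kernel of $U(t)$ together with a $TT^{*}$/Keel--Tao argument; and (ii) the abstract functional-analytic properties of the atomic $U$--$V$ spaces recorded in \cite{hadac09well,koch05dispersive,koch07priori}. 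No new oscillatory-integral input is needed at this stage.

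For \eqref{equ:strichartz estimate,U} I would run the usual transference principle. It suffices to bound $U(\cdot)a$ in $L^{q}_{t}L^{p}_{x,\xi}$ for a single $U^{q}$-atom $a=\sum_{k=1}^{K}\chi_{[t_{k-1},t_{k})}\phi_{k-1}$ with $\sum_{k}\n{\phi_{k-1}}_{L^{2}_{x,\xi}}^{q}=1$, since then for a general $\wt{f}$ with $U(-\cdot)\wt{f}=\sum_{j}\la_{j}a_{j}$ one sums $\n{\wt{f}}_{L^{q}_{t}L^{p}_{x,\xi}}\leq\sum_{j}|\la_{j}|\,\n{U(\cdot)a_{j}}_{L^{q}_{t}L^{p}_{x,\xi}}$ and takes the infimum over atomic decompositions. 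On each interval $[t_{k-1},t_{k})$ the function $U(t)a(t)$ equals the free evolution $U(t)\phi_{k-1}$, so disjointness of the intervals and the linear Strichartz estimate give
\begin{align*}
\n{U(\cdot)a}_{L^{q}_{t}L^{p}_{x,\xi}}^{q}=\sum_{k=1}^{K}\int_{t_{k-1}}^{t_{k}}\n{U(t)\phi_{k-1}}_{L^{p}_{x,\xi}}^{q}\,dt\leq\sum_{k=1}^{K}\n{U(\cdot)\phi_{k-1}}_{L^{q}_{t}L^{p}_{x,\xi}}^{q}\lesssim\sum_{k=1}^{K}\n{\phi_{k-1}}_{L^{2}_{x,\xi}}^{q}=1.
\end{align*}

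For the inhomogeneous estimates, write $u(t):=\int_{0}^{t}U(t-\tau)\mathcal N[\wt{f}](\tau)\,d\tau$ for the Duhamel term and set $F:=\mathcal N[\wt{f}]$. Estimate \eqref{equ:strichartz estimate,duhamel,U} is the direct endpoint: in the pulled-back picture $U(-t)u(t)=\int_{0}^{\infty}\mathbf 1_{\{s<t\}}\,U(-s)F(s)\,ds$, and for each fixed $s$ the function $t\mapsto\mathbf 1_{\{s<t\}}g=\chi_{[s,\infty)}(t)g$ is precisely $\n{g}_{L^{2}_{x,\xi}}$ times a $U^{2}$-atom; hence $\n{u}_{U_{L}^{2}L^{2}_{x,\xi}}\leq\int_{0}^{\infty}\n{F(s)}_{L^{2}_{x,\xi}}\,ds=\n{F}_{L^{1}_{t}L^{2}_{x,\xi}}$. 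Estimate \eqref{equ:strichartz estimate,duhamel,V} I would obtain by duality: since $U(t)$ is unitary on $L^{2}_{x,\xi}$ the pairing $(U_{L}^{q})^{*}\cong V_{L}^{q'}$ holds with the Hadac--Herr--Koch bilinear form $B$, so $\n{u}_{V_{L}^{q'}}=\sup\{|B(v,u)|:\n{v}_{U_{L}^{q}}\leq1\}$; the integration-by-parts identity for the Duhamel term gives $B(v,u)=\int_{0}^{\infty}\langle v(t),F(t)\rangle_{L^{2}_{x,\xi}}\,dt$ (first for step functions $v$, which are dense in $U_{L}^{q}$, then by continuity), and Hölder in $t$ and in $(x,\xi)$ together with \eqref{equ:strichartz estimate,U} bound this by $\n{v}_{L^{q}_{t}L^{p}_{x,\xi}}\n{F}_{L^{q'}_{t}L^{p'}_{x,\xi}}\lesssim\n{F}_{L^{q'}_{t}L^{p'}_{x,\xi}}$, which is the claim with $F=\mathcal N[\wt{f}]$. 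The inclusion $U_{L}^{2}\hookrightarrow V_{L}^{2}$ makes \eqref{equ:strichartz estimate,duhamel,U} consistent with the $q=2$ case of \eqref{equ:strichartz estimate,duhamel,V}.

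The main obstacle here is not analytic but bookkeeping: the only delicate points are the correct use of the abstract $U$--$V$ duality — the integration-by-parts formula for $u=\int_{0}^{t}U(t-\tau)F\,d\tau$, the handling of the endpoint $t=\infty$ under the convention $u(\infty)=0$ so that $u$ genuinely lies in the right-continuous subspace and has finite $V_{L}^{q'}$-norm, and the density of step functions in $U_{L}^{q}$ — all of which are standard and available from \cite{hadac09well,koch05dispersive,koch07priori}. Everything with real content, namely the dispersive decay and the linear Strichartz estimate for $e^{it\nabla_{\xi}\cdot\nabla_{x}}$, is deferred to the appendix.
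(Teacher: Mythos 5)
Your proposal is correct and is essentially the paper's own route: the paper simply invokes the linear Strichartz estimate \eqref{equ:strichartz estimate,linear} from the appendix together with the standard $U$--$V$ machinery (citing \cite[Chapter 4.10]{koch14dispersive}), and what you write out — transference to $U^{q}$-atoms for \eqref{equ:strichartz estimate,U}, the atom $\chi_{[s,\infty)}g$ construction for the $L^{1}_{t}L^{2}_{x,\xi}$ endpoint \eqref{equ:strichartz estimate,duhamel,U}, and the $(U^{q}_{L})^{*}\cong V^{q'}_{L}$ duality with H\"older for \eqref{equ:strichartz estimate,duhamel,V} — is exactly the standard argument being cited. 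No gaps; the only points needing care (duality normalization, density of step functions, the convention at $t=\infty$) are the ones you already flag and are covered by the cited references.
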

\begin{proof}
The Strichartz estimates are well-known in the dispersive literature.
The estimates \eqref{equ:strichartz estimate,U}--\eqref{equ:strichartz estimate,duhamel,U} follow from the linear Strichartz estimate \eqref{equ:strichartz estimate,linear} in the Appendix \ref{section:Strichartz Estimates} and the definition of the $U$--$V$ spaces. See \cite[Chapter 4.10]{koch14dispersive} for more details.
\end{proof}

\subsection{Scaling-invariant Bilinear Estimate for the Gain Term}\label{section:Scaling-invariant Bilinear Estimate for Gain Term}
Before getting into the analysis of the scaling invariant-bilinear estimate, we provide a convolution type inequality as follows.
\begin{lemma}\label{lemma:Q+,bilinear estimate}
Let $\frac{1}{p}+\frac{1}{q}=\frac{1}{2}$.
\begin{align}\label{equ:Q+,bilinear estimate}
\bbn{\int_{\mathbb{S}^{2}}\int_{\R^{3}}\frac{\wt{f}(\xi^{+}+\eta)\wt{g}(\xi^{-}-\eta)}{|\eta|^{3+\ga}}
\textbf{b}(\frac{\xi}{|\xi|}\cdot \omega)d\eta d\omega}_{L_{\xi}^{2}}\lesssim
\n{\wt{f}}_{L^{\frac{6p}{6-p\ga}}} \n{\wt{g}}_{L^{\frac{6q}{6-q\ga}}}.
\end{align}
In particular, we have
\begin{align}
\n{\wt{Q}^{+}(\wt{f},\wt{g})}_{L_{\xi}^{2}}
\lesssim& \n{\wt{f}}_{L_{\xi}^{2}}
\n{\wt{g}}_{L_{\xi}^{\frac{3}{-\ga}}},\label{equ:Q+,bilinear estimate,PM,f,g}\\
\n{\wt{Q}^{+}(\wt{f},\wt{g})}_{L_{\xi}^{2}}
\lesssim&  \n{\wt{f}}_{L_{\xi}^{\frac{3}{-\ga}}}
\n{\wt{g}}_{L_{\xi}^{2}},\label{equ:Q+,bilinear estimate,PM,g,f}\\
\n{\wt{Q}^{+}(\wt{f},\wt{g})}_{L_{\xi}^{2}}
\lesssim& \n{\wt{f}}_{L_{\xi}^{3}}
\n{\wt{g}}_{L_{\xi}^{\frac{6}{1-2\ga}}},\label{equ:Q+,bilinear estimate,L3,PM,f,g}\\
\n{\wt{Q}^{+}(\wt{f},\wt{g})}_{L_{\xi}^{2}}
\lesssim&  \n{\wt{f}}_{L_{\xi}^{\frac{6}{1-2\ga}}}
\n{\wt{g}}_{L_{\xi}^{3}}.\label{equ:Q+,bilinear estimate,L3,PM,g,f}
\end{align}
\end{lemma}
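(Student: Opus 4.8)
The plan is to reduce the bilinear estimate \eqref{equ:Q+,bilinear estimate} to a weighted convolution inequality and then deploy the Hardy--Littlewood--Sobolev (HLS) inequality. First I would perform the angular integration: since $\textbf{b}(\frac{\xi}{|\xi|}\cdot\omega)\leq C|\frac{\xi}{|\xi|}\cdot\omega|\leq C$, the factor $\textbf{b}$ is harmless and can be bounded pointwise, and the $d\omega$ integral over $\mathbb{S}^{2}$ only contributes a finite constant once the remaining $\xi$-dependence is controlled. The substantive object is therefore
\begin{align*}
I(\xi)=\int_{\mathbb{S}^{2}}\int_{\R^{3}}\frac{|\wt{f}(\xi^{+}+\eta)|\,|\wt{g}(\xi^{-}-\eta)|}{|\eta|^{3+\ga}}\,d\eta\,d\omega,
\end{align*}
with $\xi^{\pm}=\frac12(\xi\pm|\xi|\omega)$, and the goal is $\n{I}_{L_{\xi}^{2}}\lesssim \n{\wt{f}}_{L^{a}}\n{\wt{g}}_{L^{b}}$ where $\frac1a=\frac16-\frac{\ga}{6p}\cdot\frac{?}{}$; more precisely, matching exponents, one needs $\frac{1}{a}=\frac{1}{p}+\frac{\ga}{6}$ rewritten as $a=\frac{6p}{6-p\ga}$ and similarly $b=\frac{6q}{6-q\ga}$ (note $\ga\le 0$ so these exponents lie in the admissible range). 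Changing variables in the inner integral from $\eta$ to $w=\xi^{+}+\eta$, we get $\wt{g}$ evaluated at $\xi^{-}-\eta=\xi^{-}-w+\xi^{+}=\xi-w$, so the inner integral becomes a genuine convolution-type expression $\int |\wt{f}(w)|\,|\wt{g}(\xi-w)|\,|w-\xi^{+}|^{-3-\ga}\,dw$.

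The main step is then to control, for each fixed $\omega$, the weighted bilinear form $\int |\wt{f}(w)||\wt{g}(\xi-w)||w-\xi^{+}(\xi,\omega)|^{-(3+\ga)}\,dw$ in $L_{\xi}^{2}$. Here I would either (i) view this as a fractional-integral-against-a-product estimate and apply the HLS/Young-type inequality in the $\xi$ variable after freezing $\omega$, or (ii) split $|w-\xi^{+}|^{-(3+\ga)}$ by Littlewood--Paley/dyadic decomposition in the size of $|w-\xi^{+}|$ and sum a geometric series (this uses $\ga<0$ so that $-(3+\ga)>-3$, keeping the kernel locally integrable, while $\ga\ge-\frac12$ keeps the tail summable against the $L^2$ target). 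Approach (i) is cleanest: Minkowski's integral inequality in $L_\xi^2$ lets me pull the $d\omega$ integral outside, and then HLS with the kernel $|\cdot|^{-(3+\ga)}$ (of order $-\ga\in[0,\frac12]$ in $\R^3$) together with Hölder gives exactly $\n{\wt f}_{L^{a}}\n{\wt g}_{L^{b}}$ with the stated exponents, the condition $\frac1p+\frac1q=\frac12$ being precisely the Hölder/scaling balance that makes the $L^2_\xi$ norm come out. The role of $\xi^{+}$ depending on $\xi$ (rather than being a translation) must be handled carefully: after the change of variables one should translate $w\mapsto w+\xi^{+}$ so that the kernel becomes a clean power $|w|^{-(3+\ga)}$, at the cost of shifting the arguments of $\wt f$ and $\wt g$ by $\xi^{+}$, $\xi^{-}$ respectively, which are measure-preserving affine maps in $\xi$ for fixed $\omega$ and hence do not change $L^{a}$, $L^{b}$ norms.

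Finally, the four specializations \eqref{equ:Q+,bilinear estimate,PM,f,g}--\eqref{equ:Q+,bilinear estimate,L3,PM,g,f} follow by choosing $(p,q)$ appropriately in \eqref{equ:Q+,bilinear estimate}: for \eqref{equ:Q+,bilinear estimate,PM,f,g} take the exponent pair making $\frac{6p}{6-p\ga}=2$, i.e. $p=\frac{6}{3+\ga}$... actually one solves $\frac{6p}{6-p\ga}=2 \Rightarrow 6p=12-2p\ga\Rightarrow p=\frac{12}{6+2\ga}=\frac{6}{3+\ga}$, whence $q$ is determined by $\frac1p+\frac1q=\frac12$ and $\frac{6q}{6-q\ga}=\frac{3}{-\ga}$ should be checked to match; similarly \eqref{equ:Q+,bilinear estimate,L3,PM,f,g}--\eqref{equ:Q+,bilinear estimate,L3,PM,g,f} come from forcing one factor to be $L^{3}$. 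The symmetric versions follow by swapping the roles of $\wt f$ and $\wt g$, which corresponds to $\xi^{+}\leftrightarrow\xi^{-}$ and $\omega\mapsto-\omega$, a symmetry of $\mathbb{S}^{2}$. The hard part will be justifying step (i)/(ii) uniformly in $\omega$ and tracking that the $\omega$-integration genuinely converges after the $L^2_\xi$ bound — in other words, confirming that the weighted HLS constant does not degenerate as $\omega$ ranges over $\mathbb{S}^2$ (it does not, since $\xi^{+}(\xi,\omega)$ enters only through a translation once we change variables); everything else is bookkeeping with Hölder and HLS exponents.
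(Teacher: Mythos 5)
Your proposal has a genuine gap at its central step. For fixed $\omega$, the quantity $\int_{\R^{3}}|\wt{f}(\xi^{+}+\eta)||\wt{g}(\xi^{-}-\eta)||\eta|^{-3-\ga}\,d\eta$ is \emph{not} a convolution in $\xi$: both arguments depend on $\xi$ through $\xi^{\pm}=\frac12(\xi\pm|\xi|\omega)$, and these maps are neither affine nor measure preserving in $\xi$ --- their Jacobians are $\frac18\bigl(1\pm\frac{\xi}{|\xi|}\cdot\omega\bigr)$, which degenerate as $\frac{\xi}{|\xi|}\cdot\omega\to\mp1$. Your translation $w\mapsto w+\xi^{+}$ simply returns you to the original expression, so the claim that one can "apply HLS in the $\xi$ variable after freezing $\omega$" because the shifts "do not change $L^{a}$, $L^{b}$ norms" has no content: HLS/Young apply to genuine (translation-invariant) convolutions, whereas here the whole difficulty is the composition with the $\xi$-dependent, degenerate maps $\xi\mapsto\xi^{\pm}$. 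Likewise, bounding $\textbf{b}$ by a constant and asserting that the $d\omega$ integration "only contributes a finite constant" presupposes a bound $L^{a}\times L^{b}\to L_{\xi}^{2}$ for the fixed-$\omega$ operator that is uniform over $\mathbb{S}^{2}$; that is exactly what remains unproven, and it is not to be expected for free, since any naive change of variables costs inverse powers of $1\pm\cos\theta$ and the Grad cutoff $\textbf{b}(\cos\theta)\lesssim|\cos\theta|$ does not vanish at $\cos\theta=\pm1$. The known $L^{p}\times L^{q}\to L^{2}$ bounds for the gain operator genuinely use the angular average, not a pointwise-in-$\omega$ estimate.

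For comparison, the paper's proof decouples $\wt f$ and $\wt g$ \emph{before} touching the $\xi^{\pm}$ geometry: Cauchy--Schwarz in $\eta$ with the weight $|\eta|^{-3-\ga}$ split evenly gives the pointwise bound by $\Phi(\xi^{+})\Psi(\xi^{-})$ with $\Phi=\bigl(|\wt f|^{2}*|\cdot|^{-3-\ga}\bigr)^{1/2}$, $\Psi=\bigl(|\wt g|^{2}*|\cdot|^{-3-\ga}\bigr)^{1/2}$; then the constant-kernel (Maxwellian) bilinear estimate of Alonso--Carneiro--Gamba, $\bigl\|\int_{\mathbb{S}^{2}}\Phi(\xi^{+})\Psi(\xi^{-})\textbf{b}\,d\omega\bigr\|_{L_{\xi}^{2}}\lesssim\n{\Phi}_{L^{p}}\n{\Psi}_{L^{q}}$ for $\frac1p+\frac1q=\frac12$, is invoked --- this imported result is where the $\xi^{\pm}$ structure and the angular integration are actually handled --- and HLS enters only afterwards, to give $\n{\Phi}_{L^{p}}=\bigl\||\wt f|^{2}*|\cdot|^{-3-\ga}\bigr\|_{L^{p/2}}^{1/2}\lesssim\n{\wt f}_{L^{6p/(6-p\ga)}}$. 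Your exponent bookkeeping for the four specializations does match the paper's choices, but without either importing that angular-averaged estimate or supplying a substitute argument for the composition with $\xi\mapsto\xi^{\pm}$, the main inequality \eqref{equ:Q+,bilinear estimate} is not established by your argument.
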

\begin{proof}
For the case of Maxwellian molecules, it has been established in \cite[Theorem 1]{alonso2010estimates} that
\begin{align}\label{equ:Q+,bilinear estimate,constant kernel}
\bbn{\int_{\mathbb{S}^{2}} \wt{f}(\xi^{+})\wt{g}(\xi^{-})
\textbf{b}(\frac{\xi}{|\xi|}\cdot \omega) d\omega}_{L_{\xi}^{2}}\lesssim \n{\wt{f}}_{L_{\xi}^{p}}\n{\wt{g}}_{L_{\xi}^{q}},\quad \frac{1}{p}+\frac{1}{q}=\frac{1}{2}.
\end{align}
By Cauchy-Schwarz inequality and \eqref{equ:Q+,bilinear estimate,constant kernel}, we then have
\begin{align}\label{equ:Q+,bilinear estimate,proof}
&\bbn{\int_{\mathbb{S}^{2}}\int_{\mathbb{R}^{3}}\frac{\wt{f}(\xi^{+}+\eta)\wt{g}(\xi^{-}-\eta)}{|\eta|^{3+\ga}}
\textbf{b}(\frac{\xi}{|\xi|}\cdot \omega)d\eta d\omega}_{L_{\xi}^{2}}\\
\leq& \bbn{\int_{\mathbb{S}^{2}}\lrc{\int_{\mathbb{R}^{3}}\frac{|\wt{f}(\xi^{+}+\eta)|^{2}}{|\eta|^{3+\ga}}
d\eta}^{\frac{1}{2}}
\lrc{ \int_{\mathbb{R}^{3}}\frac{|\wt{g}(\xi^{-}-\eta)|^{2}}{|\eta|^{3+\ga}}d\eta}^{\frac{1}{2}}\textbf{b}(\frac{\xi}{|\xi|}\cdot \omega) d\omega}_{L_{\xi}^{2}}\notag\\
\leq& \bbn{\lrc{\int_{\mathbb{R}^{3}}\frac{|\wt{f}(\xi+\eta)|^{2}}{|\eta|^{3+\ga}}
d\eta}^{\frac{1}{2}}}_{L_{\xi}^{p}}
\bbn{\lrc{\int_{\mathbb{R}^{3}}\frac{|\wt{g}(\xi-\eta)|^{2}}{|\eta|^{3+\ga}}d\eta}^{\frac{1}{2}}}_{L_{\xi}^{q}} \notag\\
=& \bbn{\int_{\mathbb{R}^{3}}\frac{|\wt{f}(\xi+\eta)|^{2}}{|\eta|^{3+\ga}}
d\eta}^{\frac{1}{2}}_{L_{\xi}^{\frac{p}{2}}}
\bbn{\int_{\mathbb{R}^{3}}\frac{|\wt{g}(\xi-\eta)|^{2}}{|\eta|^{3+\ga}}d\eta}^{\frac{1}{2}}_{L_{\xi}^{\frac{q}{2}}}\notag\\
\lesssim& \n{\wt{f}}_{L^{\frac{6p}{6-p\ga}}} \n{\wt{g}}_{L^{\frac{6q}{6-q\ga}}},\notag
\end{align}
where in the last inequality we have used the Hardy-Littlewood-Sobolev inequality that
\begin{align*}
\n{u*|\cdot|^{-3-\ga}}_{L^{\frac{p}{2}}}\lesssim \n{u}_{L^{\frac{3p}{6-p\ga}}}.
\end{align*}
Therefore, we complete the proof of \eqref{equ:Q+,bilinear estimate}.
Then we obtain estimates \eqref{equ:Q+,bilinear estimate,PM,f,g}--\eqref{equ:Q+,bilinear estimate,PM,g,f} by taking
$$(p,q)=(\frac{6}{3+\ga},-\frac{6}{\ga}),\quad (p,q)=(-\frac{6}{\ga},\frac{6}{3+\ga}),$$
and obtain estimates \eqref{equ:Q+,bilinear estimate,L3,PM,f,g}--\eqref{equ:Q+,bilinear estimate,L3,PM,g,f} by taking
$$(p,q)=(3,6),\quad (p,q)=(6,3).$$

\end{proof}

To prove the scaling-invariant bilinear estimate for the gain term, we need a detailed frequency analysis from
Littlewood-Paley theory.\footnote{See \cite{chen2019derivation,chen2022unconditional,CSZ22} for some examples sharing similar critical flavor but carrying completely different structures.}
Let $\chi(x)$ be a cutoff function and satisfy $\chi(x)=1$ for $|x|\leq 1$ and $\chi(x)=0$ for $|x|\geq 2$. Let $N$ be a dyadic number and define the Littlewood-Paley projector
\begin{align}
\widehat{P_{N}u}(y)=\vp_{N}(y)\widehat{u}(y), \quad \vp_{N}(y)=\chi(\frac{y}{2N})-\chi(\frac{y}{N}).
\end{align}
We denote by $P_{N}^{x}$ and $P_{M}^{\xi}$ the projector of the $x$-variable and $\xi$-variable respectively.

\begin{lemma}[Scaling-invariant bilinear estimate]\label{lemma:bilinear estimate,gain term,gwp}
Let $s_{c}=\frac{1}{2}$, $\ga\in[-\frac{1}{2},0]$, $\al\geq 0$, $\be\geq 0$, $\ve\in[0,1]$. Then we have
\begin{align}\label{equ:bilinear estimate,gain term,gwp}
&\n{\lra{\ve\nabla_{x}}^{\al}\lra{\ve \nabla_{\xi}}^{\be}\wt{Q}^{+}(\wt{f},\wt{g})}_{L_{t}^{1}H_{x}^{s_{c}}H_{\xi}^{s_{c}+\ga}}\\
\leq& C\n{\lra{\ve\nabla_{x}}^{\al}\lra{\ve \nabla_{\xi}}^{\be}\wt{f}}_{U_{L}^{2}H_{x}^{s_{c}}H_{\xi}^{s_{c}+\ga}}\n{\lra{\ve\nabla_{x}}^{\al}
\lra{\ve\nabla_{\xi}}^{\be}\wt{g}}_{U_{L}^{2}H_{x}^{s_{c}}H_{\xi}^{s_{c}+\ga}},\notag
\end{align}
where the constant $C$ is independent of $\ve$. Moreover, if $\al\in[\frac{1}{2},1]$ and $\be\in [\frac{1}{2},1]$, the constant $C$ can also be independent of $\al$ and $\be$.
\end{lemma}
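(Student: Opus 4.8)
The plan is to reduce to the unweighted scaling-critical estimate and then prove that one by a Littlewood--Paley/Bobylev frequency analysis, using Lemma~\ref{lemma:Q+,bilinear estimate} in the $\xi$-variable and the Strichartz estimates of Proposition~\ref{lemma:strichartz estimate,U-V} in the $(x,t)$-variables, summing the dyadic pieces and transferring the bounds to the $U^2_L$ norms. First I would dispose of the weights $\lra{\ve\nabla_x}^{\al}$ and $\lra{\ve\nabla_\xi}^{\be}$: after the Littlewood--Paley decomposition below they act as the \emph{constants} $\lra{\ve N}^{\al}$, $\lra{\ve M}^{\be}$ on each dyadic block, where $N,M$ are the output $x$- and $\xi$-frequencies. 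Since the Bobylev identity forces $N\lesssim N_1+N_1'$ and $M\lesssim M_1+M_1'$, with $N_1,N_1',M_1,M_1'$ the input frequencies (the arguments $\xi^{+}+\eta$ and $\xi^{-}-\eta$ of $\wt{f},\wt{g}$ summing to $\xi$, so the $\eta$-shift cancels), the elementary bound $\lra{\ve(a+b)}^{\theta}\lesssim_{\theta}\lra{\ve a}^{\theta}+\lra{\ve b}^{\theta}$ — with constant $\le 2$ when $\theta\in[\tfrac12,1]$, in all cases uniform in $\ve\in[0,1]$ — lets me move the weights onto the two input factors. Thus it suffices to prove the case $\al=\be=0$,
\[
\n{\wt{Q}^{+}(\wt{f},\wt{g})}_{L^{1}_{t}H^{s_{c}}_{x}H^{s_{c}+\ga}_{\xi}}\lesssim \n{\wt{f}}_{U^2_{L}H^{s_{c}}_{x}H^{s_{c}+\ga}_{\xi}}\n{\wt{g}}_{U^2_{L}H^{s_{c}}_{x}H^{s_{c}+\ga}_{\xi}},\qquad s_{c}=\tfrac12 ,
\]
with the dyadic-block bounds below carrying enough decay to sum.

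Next I would apply $P^{x}_{N}P^{\xi}_{M}$ to $\wt{Q}^{+}(\wt{f},\wt{g})$ and decompose $\wt{f}=\sum_{N_1,M_1}P^{x}_{N_1}P^{\xi}_{M_1}\wt{f}$, likewise $\wt{g}$. In $x$ the term $\wt{Q}^{+}$ is a pointwise product, so the usual paraproduct trichotomy applies; in $\xi$ the identity $\xi=\xi^{+}+\xi^{-}$ gives $M\lesssim\max(M_1,M_1')$. The two further structural facts — coming, as noted, from energy conservation — are $|\xi^{+}|^2+|\xi^{-}|^2=|\xi|^2$, hence $|\xi^{\pm}|\le|\xi|$ and $\max(|\xi^{+}|,|\xi^{-}|)\gtrsim|\xi|$; the first is crucial because when the output frequency $M$ is much smaller than the inputs, $|\xi^{\pm}|\le|\xi|\sim M$ forces $|\eta|\sim M_{\max}$, so the collision kernel $|\eta|^{-(3+\ga)}$ contributes the large gain $M_{\max}^{-(3+\ga)}$ there. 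Placing the output derivatives $\lra{\nabla_x}^{s_{c}}\lra{\nabla_\xi}^{s_{c}+\ga}$ on the highest-frequency input(s), I reduce to a single dyadic quadruple $(N_1,M_1;N_1',M_1')$, the delicate case being the resonant one $N\sim N_1\sim N_1'$, $M\sim M_1\sim M_1'$.

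For a fixed quadruple I would split $L^{1}_{t}=L^{2}_{t}\cdot L^{2}_{t}$ by H\"older (both Strichartz exponents must be $\ge 2$ with reciprocals summing to $1$, so this choice is forced). For each $L^{2}_{t}L^{2}_{x\xi}$ factor I would: apply Lemma~\ref{lemma:Q+,bilinear estimate} in $\xi$, $\n{\wt{Q}^{+}(\cdot,\cdot)(t,x,\cdot)}_{L^{2}_{\xi}}\lesssim\n{\cdot}_{L^{a}_{\xi}}\n{\cdot}_{L^{b}_{\xi}}$ with $\tfrac1a+\tfrac1b=\tfrac12-\tfrac\ga3$ (or, when $M\ll M_{\max}$, its kernel-localized refinement carrying the $M_{\max}^{-(3+\ga)}$ gain); then H\"older in $x$; then Bernstein in $x$ and $\xi$ to land both inputs in the $q=2$ Strichartz space $L^{3}_{x\xi}$; then $\n{\cdot}_{L^{2}_{t}L^{3}_{x\xi}}\lesssim\n{\cdot}_{U^2_{L}L^{2}_{x\xi}}$ by Proposition~\ref{lemma:strichartz estimate,U-V} (using $U^2_L\hookrightarrow U^{q}_L$ as needed). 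Tracking the dyadic powers, the $\xi$-Bernstein cost is exactly $M_{\max}^{s_{c}+\ga}$ and the $x$-Bernstein cost exactly $N_{\max}^{s_{c}}$, which is precisely the output derivative budget, so the resonant block closes with an $O(1)$ constant by this crude argument, and the non-resonant blocks close with genuine off-diagonal gains (from the kernel, from the low-high paraproduct mismatch, and from a bilinear Strichartz gain when the colliding packets are transverse). One then sums over quadruples and over $(N,M)$, using that $P^{x}_{N}P^{\xi}_{M}$ commute with $U(t)$ and are bounded on $U^2_{L}$, together with the square-function properties of these spaces under Littlewood--Paley truncation.

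The hard part will be closing the sum in the resonant regime: the crude per-block bound there is only $O(1)$, which is not summable over the $\ell^{1}$-family of dyadic output scales (the square function supplies only $\ell^{2}$), so one must extract a genuine gain. This is where the dispersive input is essential — a bilinear refinement of the Strichartz estimate for the hyperbolic Schr\"odinger flow $e^{it\nabla_{\xi}\cdot\nabla_{x}}$, exploiting that the packets $\xi^{+}+\eta$ and $\xi^{-}-\eta$ are transverse for generic $\om\in\mathbb{S}^{2}$ in the Bobylev average (and the kernel decay forced by $|\xi^{\pm}|\le|\xi|$ when $M$ is small). The rigidity of the exponents — the $q=2$ Strichartz pair forces the $\xi$-integrability of each input to be exactly $L^{3}_{\xi}$, which matches Lemma~\ref{lemma:Q+,bilinear estimate} only when $\ga\ge-\tfrac12$, where the balanced exponent $a=\tfrac{12}{3-2\ga}$ solving $\tfrac2a=\tfrac12-\tfrac\ga3$ satisfies $a\ge3$ so that $L^{3}_{\xi}$ can be upgraded to $L^{a}_{\xi}$ by Bernstein (for $\ga<-\tfrac12$ one has $a<3$ and this step fails) — is exactly what pins the endpoints $\ga\ge-\tfrac12$ and $s_{c}=\tfrac12$ of the method.
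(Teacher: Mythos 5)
There is a genuine gap at exactly the point you flag as ``the hard part.'' Your block-by-block scheme gives only an $O(1)$ bound per resonant quadruple and you then defer the summation to an unproven ``bilinear refinement of the Strichartz estimate exploiting transversality'' of the packets $\xi^{+}+\eta$, $\xi^{-}-\eta$. No such bilinear improvement is established in your argument, and none is needed: the paper closes the sum by running the whole estimate in dual form, i.e.\ proving \eqref{equ:bilinear estimate,gain term,gwp,duality} by pairing against $h\in L_{t}^{\infty}H_{x}^{-s_{c}}H_{\xi}^{-s_{c}-\ga}$, and then performing Cauchy--Schwarz in the dyadic parameters with the off-diagonal weights $(M/M_{1})^{s_{c}+\ga+\be}$ and $(N/N_{1})^{s_{c}}$ produced by Bernstein. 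After Cauchy--Schwarz the diagonal blocks are not summed in $\ell^{1}$ at all: they appear as $\ell^{2}_{M_{1}}\ell^{2}_{N_{1}}$ square functions of the Littlewood--Paley pieces of one input (controlled via the Minkowski inequality for atomic $U^{2}$ spaces, \cite[Lemma 4.25]{koch14dispersive}) paired against the plain $L^{2}$ norm of $h$, while the output sums over $M\lesssim M_{1}$, $N\lesssim N_{1}$ converge geometrically because $s_{c}+\ga+\be>0$ and $s_{c}>0$. So the ``$\ell^{1}$ versus $\ell^{2}$'' obstruction you run into is an artifact of estimating the $L^{1}_{t}H^{s_{c}}_{x}H^{s_{c}+\ga}_{\xi}$ norm block by block rather than by duality; your proposal as written does not close, and the missing step is the actual content of the lemma. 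Your claimed kernel gain $M_{\max}^{-(3+\ga)}$ from $|\eta|\sim M_{\max}$ is likewise an unproven refinement of Lemma \ref{lemma:Q+,bilinear estimate} that the paper never needs.

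A second, related problem is your opening reduction to $\al=\be=0$. Transferring the weights $\lra{\ve N}^{\al}\lra{\ve M}^{\be}$ to the high-frequency inputs is harmless per block, but discarding $\be$ throws away exactly the decay that makes the dyadic Cauchy--Schwarz work when $s_{c}+\ga=0$: for $\ga=-\tfrac12$ the unweighted exponent $s_{c}+\ga+\be$ degenerates to $0$ and there is no off-diagonal gain in the $\xi$-frequencies. This is why the paper splits into the cases $s_{c}+\ga+\be=0$ and $s_{c}+\ga+\be>0$, handling the borderline case by a different argument (fractional Leibniz rule in $x$, the estimates \eqref{equ:Q+,bilinear estimate,L3,PM,f,g}--\eqref{equ:Q+,bilinear estimate,L3,PM,g,f}, the embedding $W^{s_{c},6}\hookrightarrow L^{3}$, and Strichartz), with no Littlewood--Paley decomposition in $\xi$. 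Your proposal would need either to reproduce this separate endpoint argument or to keep the $\be$-weight inside the dyadic summation, as the paper does; as stated, the reduction plus the resonant-regime gap leaves the lemma unproved precisely at the endpoint $\ga=-\tfrac12$ that the main theorem requires.
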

\begin{proof}
We divide the parameters into two cases as follows:
\begin{enumerate}[$(1)$]
\item $s_{c}+\ga+\be=0$.
\item $s_{c}+\ga+\be>0$.
\end{enumerate}

For the case $(1)$ that $\ga=-s_{c}=-\frac{1}{2}$ and $\be=0$, by triangle inequality, we have
\begin{align}\label{equ:bilinear estimate,gain term,case1,triangle}
\n{\lra{\ve\nabla_{x}}^{\al}\wt{Q}^{+}(\wt{f},\wt{g})}_{L_{t}^{1}H_{x}^{s_{c}}L_{\xi}^{2}}\lesssim
\n{\wt{Q}^{+}(\wt{f},\wt{g})}_{L_{t}^{1}H_{x}^{s_{c}}L_{\xi}^{2}}+
\n{|\ve \nabla_{x}|^{\al}\wt{Q}^{+}(\wt{f},\wt{g})}_{L_{t}^{1}H_{x}^{s_{c}}L_{\xi}^{2}}.
\end{align}
For the first term on the right hand side of \eqref{equ:bilinear estimate,gain term,case1,triangle},
we use the fractional Leibniz rule in Lemma \ref{lemma:generalized leibniz rule}, and estimates \eqref{equ:Q+,bilinear estimate,L3,PM,f,g}--\eqref{equ:Q+,bilinear estimate,L3,PM,g,f} in Lemma \ref{lemma:Q+,bilinear estimate} to obtain
\begin{align*}
&\n{\lra{\nabla_{x}}^{s_{c}}\wt{Q}^{+}(\wt{f},\wt{g})}_{L_{t}^{1}L_{x}^{2}L_{\xi}^{2}}\\
\lesssim& \bbn{ \wt{Q}^{+}\lrs{\n{\lra{\nabla_{x}}^{s_{c}}\wt{f}}_{L_{x}^{3}},\n{\wt{g}}_{L_{x}^{6}}}}_{L_{t}^{1}L_{\xi}^{2}}+\bbn{ \wt{Q}^{+}\lrs{\n{\wt{f}}_{L_{x}^{6}},\n{\lra{\nabla_{x}}^{s_{c}}\wt{g}}_{L_{x}^{3}}}}_{L_{t}^{1}L_{\xi}^{2}}\\
\lesssim& \n{\lra{\nabla_{x}}^{s_{c}}\wt{f}}_{L_{t}^{2}L_{\xi}^{3}L_{x}^{3}}\n{\wt{g}}_{L_{t}^{2}L_{\xi}^{3}L_{x}^{6}}+
\n{\wt{f}}_{L_{t}^{2}L_{\xi}^{3}L_{x}^{6}}\n{\lra{\nabla_{x}}^{s_{c}}\wt{g}}_{L_{t}^{2}L_{\xi}^{3}L_{x}^{3}}.
\end{align*}
By Sobolev inequality that $W^{s_{c},6}\hookrightarrow L^{3}$, and Strichartz estimate \eqref{equ:strichartz estimate,U}, we have
\begin{align}\label{equ:bilinear estimate,gain term,case1,L2}
\n{\lra{\nabla_{x}}^{s_{c}}\wt{Q}^{+}(\wt{f},\wt{g})}_{L_{t}^{1}L_{x}^{2}L_{\xi}^{2}}\lesssim& \n{\lra{\nabla_{x}}^{s_{c}}\wt{f}}_{L_{t}^{2}L_{\xi}^{3}L_{x}^{3}}\n{\lra{\nabla_{x}}^{s_{c}}\wt{g}}_{L_{t}^{2}L_{\xi}^{3}L_{x}^{3}}\\
\lesssim& \n{\lra{\nabla_{x}}^{s_{c}}\wt{f}}_{U_{L}^{2}L_{x}^{2}L_{\xi}^{2}}\n{\lra{\nabla_{x}}^{s_{c}}\wt{g}}_{U_{L}^{2}L_{x}^{2}L_{\xi}^{2}}.\notag
\end{align}
In the same way, for the second term on the right hand side of \eqref{equ:bilinear estimate,gain term,case1,triangle}, we also have
\begin{align}\label{equ:bilinear estimate,gain term,case1,H}
&\n{|\ve \nabla_{x}|^{\al}\wt{Q}^{+}(\wt{f},\wt{g})}_{L_{t}^{1}H_{x}^{s_{c}}L_{\xi}^{2}}\\
\lesssim& \ve^{\al}
\n{\lra{\nabla_{x}}^{s_{c}+\al}\wt{f}}_{U_{L}^{2}L_{x}^{2}L_{\xi}^{2}}
\n{\lra{\nabla_{x}}^{s_{c}}\wt{g}}_{U_{L}^{2}L_{x}^{2}L_{\xi}^{2}}+
\ve^{\al}\n{\lra{\nabla_{x}}^{s_{c}}\wt{f}}_{U_{L}^{2}L_{x}^{2}L_{\xi}^{2}}
\n{\lra{\nabla_{x}}^{s_{c}+\al}\wt{g}}_{U_{L}^{2}L_{x}^{2}L_{\xi}^{2}}\notag\\
\lesssim&\n{\lra{\ve\nabla_{x}}^{\al}\wt{f}}_{U_{L}^{2}H_{x}^{s_{c}}L_{\xi}^{2}}\n{\lra{\ve\nabla_{x}}^{\al}
\wt{g}}_{U_{L}^{2}H_{x}^{s_{c}}L_{\xi}^{2}}.\notag
\end{align}
Putting together estimates \eqref{equ:bilinear estimate,gain term,case1,triangle}, \eqref{equ:bilinear estimate,gain term,case1,L2}, and \eqref{equ:bilinear estimate,gain term,case1,H}, we complete the proof of \eqref{equ:bilinear estimate,gain term,gwp} for the case (1).

Next we deal with the case (2). By triangle inequality, we have
\begin{align}\label{equ:bilinear estimate,gain term,case2,triangle}
&\n{\lra{\ve\nabla_{x}}^{\al}\lra{\ve \nabla_{\xi}}^{\be}\wt{Q}^{+}(\wt{f},\wt{g})}_{L_{t}^{1}H_{x}^{s_{c}}H_{\xi}^{s_{c}+\ga}}\notag\\
\lesssim&
\n{\lra{\ve\nabla_{x}}^{\al}\wt{Q}^{+}(\wt{f},\wt{g})}_{L_{t}^{1}H_{x}^{s_{c}}H_{\xi}^{s_{c}+\ga}}+
\n{\lra{\ve\nabla_{x}}^{\al}|\ve\nabla_{\xi}|^{\be}\wt{Q}^{+}(\wt{f},\wt{g})}_{L_{t}^{1}H_{x}^{s_{c}}H_{\xi}^{s_{c}+\ga}}.
\end{align}
It suffices to handle the second term on the right hand of \eqref{equ:bilinear estimate,gain term,case2,triangle} and prove that
\begin{align}\label{equ:bilinear estimate,gain term,gwp,main case}
&\n{\lra{\ve\nabla_{x}}^{\al}|\ve \nabla_{\xi}|^{\be}\wt{Q}^{+}(\wt{f},\wt{g})}_{L_{t}^{1}H_{x}^{s_{c}}H_{\xi}^{s_{c}+\ga}}\\
\lesssim& \n{\lra{\ve\nabla_{x}}^{\al}\lra{\ve \nabla_{\xi}}^{\be}\wt{f}}_{U_{L}^{2}H_{x}^{s_{c}}H_{\xi}^{s_{c}+\ga}}\n{\lra{\ve\nabla_{x}}^{\al}
\lra{\ve\nabla_{\xi}}^{\be}\wt{g}}_{U_{L}^{2}H_{x}^{s_{c}}H_{\xi}^{s_{c}+\ga}},\notag
\end{align}
since the first term on the right hand of \eqref{equ:bilinear estimate,gain term,case2,triangle} can be estimated in the same way as \eqref{equ:bilinear estimate,gain term,case1,triangle} if $s_{c}+\ga=0$, or \eqref{equ:bilinear estimate,gain term,gwp,main case} with $\be=0$ if $s_{c}+\ga>0$.
To prove \eqref{equ:bilinear estimate,gain term,gwp,main case}, by duality it is equivalent to prove
\begin{align}\label{equ:bilinear estimate,gain term,gwp,duality}
&\int \wt{Q}^{+}(\wt{f},\wt{g}) h dx d\xi dt\\
\lesssim&
\n{\lra{\ve\nabla_{x}}^{\al}\lra{\ve\nabla_{\xi}}^{\be}\wt{f}}_{U_{L}^{2}H_{x}^{s_{c}}H_{\xi}^{s_{c}+\ga}}\n{\lra{\ve\nabla_{x}}^{\al}\lra{\ve\nabla_{\xi}}^{\be}\wt{g}}_{U_{L}^{2}H_{x}^{s_{c}}H_{\xi}^{s_{c}+\ga}}\notag\\
&\n{\lra{\ve\nabla_{x}}^{-\al}|\ve \nabla_{\xi}|^{-\be}h}_{L_{t}^{\infty}H_{x}^{-s_{c}}H_{\xi}^{-s_{c}-\ga}}.\notag
\end{align}

We denote by $I$ the integral in \eqref{equ:bilinear estimate,gain term,gwp,duality},
and insert a Littlewood-Paley decomposition so that
\begin{align*}
I=\sum_{\substack{M,M_{1},M_{2}\\N,N_{1},N_{2}}} I_{M,M_{1},M_{2},N,N_{1},N_{2}}
\end{align*}
where
\begin{align*}
I_{M,M_{1},M_{2},N,N_{1},N_{2}}= \int \wt{Q}^{+}(P_{N_{1}}^{x}P_{M_{1}}^{\xi}\wt{f},P_{N_{2}}^{x}P_{M_{2}}^{\xi}\wt{g}) P_{N}^{x}P_{M}^{\xi}h dx d\xi dt.
\end{align*}
Noticing that $\wt{Q}^{+}$ commutes with $P_{N}^{x}$, we have the constraint that $N\lesssim \max\lrs{N_{1},N_{2}}$ due to that
 \begin{align} \label{equ:property,constraint,projector}
 P_{N}^{x}(P_{N_{1}}^{x}\wt{f}P_{N_{2}}^{x}\wt{g})=0, \quad \text{if $N\geq 10\max\lrs{N_{1},N_{2}}$.}
 \end{align}
 One key observation is that the property \eqref{equ:property,constraint,projector} also holds for the $\xi$-variable, that is,
\begin{align}\label{equ:property,constraint,projector,v,variable}
P_{M}^{\xi}\wt{Q}^{+}(P_{M_{1}}^{\xi}\wt{f},P_{M_{2}}^{\xi}\wt{g})=0, \quad \text{if $M\geq 10\max\lrs{M_{1},M_{2}}$.}
\end{align}
Indeed, note that
\begin{align}\label{equ:fourier transform,decomposition,Q+}
\cf_{\xi}\lrs{P_{M}^{\xi}\wt{Q}^{+}(P_{M_{1}}^{\xi}\wt{f},P_{M_{2}}^{\xi}\wt{g})}=
\vp_{M}(v)\int_{\mathbb{S}^{2}}\int_{\mathbb{R}^{3}} (\vp_{M_{1}}f)(v^{\ast })(\vp_{M_{2}}g)(u^{\ast}) B(u-v,\omega)dud\omega.
\end{align}
By the energy conservation that $|v|^{2}+|u|^{2}=|v^{*}|^{2}+|u^{*}|^{2}$, we have $|v|\leq |v^{*}|+|u^{*}|$ for all $(u,\omega)\in \R^{3}\times \mathbb{S}^{2}$. Together with $M\geq 10\max\lrs{M_{1},M_{2}}$, this lower bound implies that
\begin{align}
\vp_{M}(v)\vp_{M_{1}}(v^{*})\vp_{M_{2}}(u^{*})=0, \quad \text{for all $(u,\omega)\in \R^{3}\times \mathbb{S}^{2}$}.
\end{align}
Thus, we arrive at \eqref{equ:property,constraint,projector,v,variable} and the constraint that $M\lesssim \max\lrs{M_{1},M_{2}}$.

 Now, we divide the sum into four cases as follows:

Case A. $M_{1}\geq M_{2}$, $N_{1}\geq N_{2}$.

Case B. $M_{1}\leq M_{2}$, $N_{1}\geq N_{2}$.

Case C. $M_{1}\geq M_{2}$, $N_{1}\leq N_{2}$.

Case D. $M_{1}\leq  M_{2}$, $N_{1}\leq N_{2}$.

We only need to treat Cases A and B, as Cases C and D follow similarly.

\textbf{Case A. $M_{1}\geq M_{2}$, $N_{1}\geq N_{2}$.}

Let $I_{A}$ denote the integral restricted to the Case A.

\begin{align*}
I_{A}=&\sum_{\substack{M_{1}\geq M_{2},M_{1}\gtrsim M\\N_{1}\geq N_{2},N_{1}\gtrsim N }}\int \wt{Q}^{+}(P_{N_{1}}^{x}P_{M_{1}}^{\xi}\wt{f},P_{N_{2}}^{x}P_{M_{2}}^{\xi}\wt{g}) P_{N}^{x}P_{M}^{\xi}h dx d\xi dt\\
=&\sum_{\substack{M_{1}\geq M\\N_{1}\gtrsim N }}\int \wt{Q}^{+}(P_{N_{1}}^{x}P_{M_{1}}^{\xi}\wt{f},P_{\leq N_{1}}^{x}P_{\leq M_{1}}^{\xi}\wt{g}) P_{N}^{x}P_{M}^{\xi}h dx d\xi dt,
\end{align*}
where in the last equality we have done the sum in $M_{2}$ and $N_{2}$. Using H\"{o}lder inequality and estimate \eqref{equ:Q+,bilinear estimate,L3,PM,f,g} in Lemma \ref{lemma:Q+,bilinear estimate}, we have
\begin{align*}
I_{A}\leq& \int \sum_{\substack{M_{1}\gtrsim M\\N_{1}\gtrsim N }} \n{\wt{Q}^{+}(P_{N_{1}}^{x}P_{M_{1}}^{\xi}\wt{f},P_{\leq N_{1}}^{x}P_{\leq M_{1}}^{\xi}\wt{g})}_{L_{\xi}^{2}} \n{P_{N}^{x}P_{M}^{\xi}h}_{L_{\xi}^{2}} dx  dt\\
\lesssim& \int \sum_{\substack{M_{1}\gtrsim M\\N_{1}\gtrsim N }} \n{P_{N_{1}}^{x}P_{M_{1}}^{\xi}\wt{f}}_{L_{\xi}^{3}}\n{P_{\leq N_{1}}^{x}P_{\leq M_{1}}^{\xi}\wt{g}}_{L_{\xi}^{\frac{6}{1-2\ga}}} \n{P_{N}^{x}P_{M}^{\xi}h}_{L_{\xi}^{2}} dx  dt.
\end{align*}
By Bernstein inequality and Sobolev inequality that $W^{s_{c}+\ga,3}\hookrightarrow L^{\frac{6}{1-2\ga}}$,
\begin{align*}
I_{A}\leq& \int \sum_{\substack{M_{1}\gtrsim M\\N_{1}\gtrsim N }} \frac{M^{s_{c}+\ga} |\ve M|^{\be}}{M_{1}^{s_{c}+\ga}|\ve M_{1}|^{\be}}
\n{\lra{\nabla_{\xi}}^{s_{c}+\ga}\lra{\ve\nabla_{\xi}}^{\be}P_{N_{1}}^{x}P_{M_{1}}^{\xi}\wt{f}}_{L_{\xi}^{3}}\\
&\n{P_{\leq N_{1}}^{x}\lra{\nabla_{\xi}}^{s_{c}+\ga}\wt{g}}_{L_{\xi}^{3}} \n{\lra{\nabla_{\xi}}^{-s_{c}-\ga}|\ve\nabla_{\xi}|^{-\be}P_{N}^{x}P_{M}^{\xi}h}_{L_{\xi}^{2}} dx  dt.
\end{align*}
With $s_{c}+\ga+\be >0$, we use Cauchy-Schwarz in $M$ and $M_{1}$ to get
\begin{align*}
I_{A}\lesssim& \int \sum_{N_{1}\gtrsim N}\n{P_{\leq N_{1}}^{x}\lra{\nabla_{\xi}}^{s_{c}+\ga}\wt{g}}_{L_{\xi}^{3}}
\lrs{\sum_{M_{1}\gtrsim M}\frac{M^{s_{c}+\ga+\be}}{M_{1}^{s_{c}+\ga+\be}}
\n{\lra{\nabla_{\xi}}^{s_{c}+\ga}\lra{\ve\nabla_{\xi}}^{\be}P_{N_{1}}^{x}P_{M_{1}}^{\xi}\wt{f}}_{L_{\xi}^{3}}^{2}}^{\frac{1}{2}}\\
&\lrs{\sum_{M_{1}\gtrsim M}\frac{M^{s_{c}+\ga+\be}}{M_{1}^{s_{c}+\ga+\be}}
\n{\lra{\nabla_{\xi}}^{-s_{c}-\ga}|\ve\nabla_{\xi}|^{-\be}P_{N}^{x}P_{M}^{\xi}h}_{L_{\xi}^{2}}^{2}}^{\frac{1}{2}}dxdt\\
\lesssim &\int \sum_{N_{1}\gtrsim N}\n{P_{\leq N_{1}}^{x}\lra{\nabla_{\xi}}^{s_{c}+\ga}\wt{g}}_{L_{\xi}^{3}}
\n{\lra{\nabla_{\xi}}^{s_{c}+\ga}\lra{\ve\nabla_{\xi}}^{\be}P_{N_{1}}^{x}P_{M_{1}}^{\xi}\wt{f}}_{l_{M_{1}}^{2}L_{\xi}^{3}}\\
&\n{\lra{\nabla_{\xi}}^{-s_{c}-\ga}|\ve\nabla_{\xi}|^{-\be}P_{N}^{x}h}_{L_{\xi}^{2}} dxdt.
\end{align*}
By H\"{o}lder inequality in the $x$-variable,
\begin{align*}
I_{A}\lesssim &
\int \sum_{N_{1}\gtrsim N}\n{P_{\leq N_{1}}^{x}\lra{\nabla_{\xi}}^{s_{c}+\ga}\wt{g}}_{L_{x}^{6}L_{\xi}^{3}}
\n{\lra{\nabla_{\xi}}^{s_{c}+\ga}\lra{\ve\nabla_{\xi}}^{\be}P_{N_{1}}^{x}P_{M_{1}}^{\xi}\wt{f}}_{L_{x}^{3}l_{M_{1}}^{2}L_{\xi}^{3}}\\
&\n{\lra{\nabla_{\xi}}^{-s_{c}-\ga}|\ve\nabla_{\xi}|^{-\be}P_{N}^{x}h}_{L_{x}^{2}L_{\xi}^{2}} dt.
\end{align*}
By Minkowski inequality, Sobolev inequality that $W^{s_{c},3}\hookrightarrow L^{6}$, and Bernstein inequality,
\begin{align*}
I_{A}\lesssim &
\int \sum_{N_{1}\gtrsim N}\n{P_{\leq N_{1}}^{x}\lra{\nabla_{x}}^{s_{c}}\lra{\nabla_{\xi}}^{s_{c}+\ga}\wt{g}}_{L_{x}^{3}L_{\xi}^{3}}
\n{\lra{\nabla_{\xi}}^{s_{c}+\ga}\lra{\ve\nabla_{\xi}}^{\be}P_{N_{1}}^{x}P_{M_{1}}^{\xi}\wt{f}}_{l_{M_{1}}^{2}L_{x}^{3}L_{\xi}^{3}}\\
&\n{\lra{\nabla_{\xi}}^{-s_{c}-\ga}|\ve\nabla_{\xi}|^{-\be}P_{N}^{x}h}_{L_{x}^{2}L_{\xi}^{2}} dt\\
\lesssim &
\int \n{\lra{\nabla_{x}}^{s_{c}}\lra{\nabla_{\xi}}^{s_{c}+\ga}\wt{g}}_{L_{x}^{3}L_{\xi}^{3}}
\sum_{N_{1}\gtrsim N} \frac{N^{s_{c}}\lra{\ve N}^{\al}}{N_{1}^{s_{c}}\lra{\ve N_{1}}^{\al}}\\
&\n{\lra{\nabla_{x}}^{s_{c}}\lra{\ve \nabla_{x}}^{\al}\lra{\nabla_{\xi}}^{s_{c}+\ga}\lra{\ve\nabla_{\xi}}^{\be}P_{N_{1}}^{x}P_{M_{1}}^{\xi}\wt{f}}_{l_{M_{1}}^{2}L_{x}^{3}L_{\xi}^{3}}\\
&\n{\lra{\nabla_{x}}^{-s_{c}}\lra{\ve \nabla_{x}}^{-\al}\lra{\nabla_{\xi}}^{-s_{c}-\ga}|\ve\nabla_{\xi}|^{-\be}P_{N}^{x}h}_{L_{x}^{2}L_{\xi}^{2}} dt.
\end{align*}
With $s_{c}=\frac{1}{2}>0$, we use Cauchy-Schwarz in $N$ and $N_{1}$ to get
\begin{align*}
I_{A}\lesssim &
\int \n{\lra{\nabla_{x}}^{s_{c}}\lra{\nabla_{\xi}}^{s_{c}+\ga}\wt{g}}_{L_{x}^{3}L_{\xi}^{3}}\n{\lra{\nabla_{x}}^{s_{c}}\lra{\ve\nabla_{x}}^{\al}\lra{\nabla_{\xi}}^{s_{c}+\ga}\lra{\ve\nabla_{\xi}}^{\be}
P_{N_{1}}^{x}P_{M_{1}}^{\xi}\wt{f}}_{l_{N_{1}}^{2}l_{M_{1}}^{2}L_{x}^{3}L_{\xi}^{3}}\\
&\n{\lra{\nabla_{x}}^{-s_{c}}\lra{\ve \nabla_{x}}^{-\al}\lra{\nabla_{\xi}}^{-s_{c}-\ga}|\ve\nabla_{\xi}|^{-\be}h}_{L_{x}^{2}L_{\xi}^{2}} dt.
\end{align*}
By H\"{o}lder inequality in the $t$-variable,
\begin{align*}
I_{A}\lesssim &\n{\lra{\nabla_{x}}^{s_{c}}\lra{\nabla_{\xi}}^{s_{c}+\ga}\wt{g}}_{L_{t}^{2}L_{x}^{3}L_{\xi}^{3}}\n{\lra{\nabla_{x}}^{s_{c}}\lra{\ve\nabla_{x}}^{\al}\lra{\nabla_{\xi}}^{s_{c}+\ga}\lra{\ve\nabla_{\xi}}^{\be}
P_{N_{1}}^{x}P_{M_{1}}^{\xi}\wt{f}}_{L_{t}^{2}l_{N_{1}}^{2}l_{M_{1}}^{2}L_{x}^{3}L_{\xi}^{3}}\\
&\n{\lra{\nabla_{x}}^{-s_{c}}\lra{\ve \nabla_{x}}^{\al}\lra{\nabla_{\xi}}^{-s_{c}-\ga}|\ve\nabla_{\xi}|^{-\be}h}_{L_{t}^{\infty}L_{x}^{2}L_{\xi}^{2}}.
\end{align*}
We use Strichartz estimate \eqref{equ:strichartz estimate,U} to obtain
\begin{align*}
I_{A}\lesssim & \n{\wt{g}}_{U_{L}^{2}H_{x}^{s_{c}}H_{\xi}^{s_{c}+\ga}}
\n{\lra{\ve\nabla_{x}}^{\al}\lra{\ve\nabla_{\xi}}^{\be}
P_{N_{1}}^{x}P_{M_{1}}^{\xi}\wt{f}}_{l_{N_{1}}^{2}l_{M_{1}}^{2}U_{L}^{2}H_{x}^{s_{c}}H_{\xi}^{s_{c}+\ga}}\\
&\n{\lra{\nabla_{x}}^{-s_{c}}\lra{\ve \nabla_{x}}^{\al}\lra{\nabla_{\xi}}^{-s_{c}-\ga}|\ve\nabla_{\xi}|^{-\be}h}_{L_{t}^{\infty}L_{x}^{2}L_{\xi}^{2}}.
\end{align*}
By Minkowski inequality for the atomic $U$ space (see \cite[Lemma 4.25]{koch14dispersive}), we have
\begin{align*}
\n{\lra{\ve\nabla_{x}}^{\al}\lra{\ve\nabla_{\xi}}^{\be}
P_{N_{1}}^{x}P_{M_{1}}^{\xi}\wt{f}}_{l_{N_{1}}^{2}l_{M_{1}}^{2}U_{L}^{2}H_{x}^{s_{c}}H_{\xi}^{s_{c}+\ga}}\leq & \n{\lra{\ve\nabla_{x}}^{\al}\lra{\ve\nabla_{\xi}}^{\be}
P_{N_{1}}^{x}P_{M_{1}}^{\xi}\wt{f}}_{U_{L}^{2}l_{N_{1}}^{2}l_{M_{1}}^{2}H_{x}^{s_{c}}H_{\xi}^{s_{c}+\ga}}\\
\lesssim& \n{\lra{\ve\nabla_{x}}^{\al}\lra{\ve\nabla_{\xi}}^{\be}\wt{f}}_{U_{L}^{2}H_{x}^{s_{c}}H_{\xi}^{s_{c}+\ga}}.
\end{align*}
Hence, we arrive at
\begin{align*}
I_{A}\lesssim & \n{\lra{\ve\nabla_{x}}^{\al}\lra{\ve\nabla_{\xi}}^{\be}\wt{g}}_{U_{L}^{2}H_{x}^{s_{c}}H_{\xi}^{s_{c}+\ga}}
\n{\lra{\ve\nabla_{x}}^{\al}\lra{\ve\nabla_{\xi}}^{\be}
\wt{f}}_{U_{L}^{2}H_{x}^{s_{c}}H_{\xi}^{s_{c}+\ga}}\\
&\n{\lra{\nabla_{x}}^{-s_{c}}\lra{\ve \nabla_{x}}^{-\al}\lra{\nabla_{\xi}}^{-s_{c}-\ga}|\ve\nabla_{\xi}|^{-\be}h}_{L_{t}^{\infty}L_{x}^{2}L_{\xi}^{2}},
\end{align*}
which completes the proof of \eqref{equ:bilinear estimate,gain term,gwp,duality} for Case A.

\textbf{Case B. $M_{1}\leq M_{2}$, $N_{1}\geq N_{2}$.}

Let $I_{B}$ denote the integral restricted to the Case B.

\begin{align*}
I_{B}=&\sum_{\substack{M_{2}\geq M_{1},M_{2}\gtrsim M\\N_{1}\geq N_{2},N_{1}\gtrsim N }}\int \wt{Q}^{+}(P_{N_{1}}^{x}P_{M_{1}}^{\xi}\wt{f},P_{N_{2}}^{x}P_{M_{2}}^{\xi}\wt{g}) P_{N}^{x}P_{M}^{\xi}h dx d\xi dt\\
=&\sum_{\substack{M_{2}\gtrsim M\\N_{1}\gtrsim N }}\int \wt{Q}^{+}(P_{N_{1}}^{x}P_{\leq M_{2}}^{\xi}\wt{f},P_{\leq N_{1}}^{x}P_{M_{2}}^{\xi}\wt{g}) P_{N}^{x}P_{M}^{\xi}h dx d\xi dt
\end{align*}
where we have done the sum in $M_{1}$ and $N_{2}$. By using H\"{o}lder inequality and then estimate \eqref{equ:Q+,bilinear estimate,L3,PM,g,f} in Lemma \ref{equ:Q+,bilinear estimate}, we have
\begin{align*}
I_{B}\lesssim& \int \sum_{\substack{M_{2}\gtrsim M\\N_{1}\gtrsim N}}\n{\wt{Q}^{+}(P_{N_{1}}^{x}P_{\leq M_{2}}^{\xi}\wt{f},P_{\leq N_{1}}^{x}P_{M_{2}}^{\xi}\wt{g})}_{L_{\xi}^{2}} \n{P_{N}^{x}P_{M}^{\xi}h}_{L_{\xi}^{2}} dx dt\\
\lesssim&\int \sum_{\substack{M_{2}\gtrsim M\\N_{1}\gtrsim N}}\n{P_{N_{1}}^{x}P_{\leq M_{2}}^{\xi}\wt{f}}_{L_{\xi}^{\frac{6}{1-2\ga}}}\n{P_{\leq N_{1}}^{x}P_{M_{2}}^{\xi}\wt{g}}_{L_{\xi}^{3}} \n{P_{N}^{x}P_{M}^{\xi}h}_{L_{\xi}^{2}} dx dt.
\end{align*}
By Bernstein inequality and Sobolev inequality that $W^{s_{c}+\ga,3}\hookrightarrow L^{\frac{6}{1-2\ga}}$,
\begin{align*}
I_{B}\lesssim& \int \sum_{\substack{M_{2}\gtrsim M\\N_{1}\gtrsim N }} \frac{M^{s_{c}+\ga}|\ve M|^{\be}}{M_{2}^{s_{c}+\ga}|\ve M_{2}|^{\be}}
\n{\lra{\nabla_{\xi}}^{s_{c}+\ga}P_{N_{1}}^{x}\wt{f}}_{L_{\xi}^{3}}\\
&\n{\lra{\nabla_{\xi}}^{s_{c}+\ga}\lra{\ve\nabla_{\xi}}^{\be}P_{\leq N_{1}}^{x}P_{M_{2}}^{\xi}\wt{g}}_{L_{\xi}^{3}} \n{\lra{\nabla_{\xi}}^{-s_{c}-\ga}|\ve\nabla_{\xi}|^{-\be}P_{N}^{x}P_{M}^{\xi}h}_{L_{\xi}^{2}} dx  dt.
\end{align*}
With $s_{c}+\ga+\be>0$, we use Cauchy-Schwarz in $M$ and $M_{2}$ to obtain
\begin{align*}
I_{B}\lesssim& \int \sum_{N_{1}\gtrsim N}
\n{\lra{\nabla_{\xi}}^{s_{c}+\ga}P_{N_{1}}^{x}\wt{f}}_{L_{\xi}^{3}}
\lrs{\sum_{M_{2}\gtrsim M}\frac{M^{s_{c}+\ga+\be}}{M_{2}^{s_{c}+\ga+\be}}
\n{\lra{\nabla_{\xi}}^{s_{c}+\ga}\lra{\ve\nabla_{\xi}}^{\be}P_{\leq N_{1}}^{x}P_{M_{2}}^{\xi}\wt{g}}_{L_{\xi}^{3}}^{2}}^{\frac{1}{2}}\\
&\lrs{\sum_{M_{2}\gtrsim M}\frac{M^{s_{c}+\ga+\be}}{M_{2}^{s_{c}+\ga+\be}}
\n{\lra{\nabla_{\xi}}^{-s_{c}-\ga}|\ve\nabla_{\xi}|^{-\be}P_{N}^{x}P_{M}^{\xi}h}_{L_{\xi}^{2}}^{2}}^{\frac{1}{2}}dxdt\\
\lesssim &\int \sum_{N_{1}\gtrsim N}
\n{\lra{\nabla_{\xi}}^{s_{c}+\ga}P_{N_{1}}^{x}\wt{f}}_{L_{\xi}^{3}}
\n{\lra{\nabla_{\xi}}^{s_{c}+\ga}\lra{\ve\nabla_{\xi}}^{\be}P_{\leq N_{1}}^{x}P_{M_{2}}^{\xi}\wt{g}}_{l_{M_{2}}^{2}L_{\xi}^{3}}\\
&\n{\lra{\nabla_{\xi}}^{-s_{c}-\ga}|\ve\nabla_{\xi}|^{-\be}P_{N}^{x}h}_{L_{\xi}^{2}} dxdt.
\end{align*}
By H\"{o}lder inequality in the $x$-variable,
\begin{align*}
I_{B}\lesssim &
\int \sum_{N_{1}\gtrsim N}
\n{\lra{\nabla_{\xi}}^{s_{c}+\ga}P_{N_{1}}^{x}\wt{f}}_{L_{x}^{6}L_{\xi}^{3}}
\n{\lra{\nabla_{\xi}}^{s_{c}+\ga}\lra{\ve\nabla_{\xi}}^{\be}P_{\leq N_{1}}^{x}P_{M_{2}}^{\xi}\wt{g}}_{L_{x}^{3}l_{M_{2}}^{2}L_{\xi}^{3}}\\
&\n{\lra{\nabla_{\xi}}^{-s_{c}-\ga}|\ve\nabla_{\xi}|^{-\be}P_{N}^{x}h}_{L_{x}^{2}L_{\xi}^{2}} dt.
\end{align*}
By Minkowski inequality, Sobolev inequality that $W^{s_{c},3}\hookrightarrow L^{6}$, and Bernstein inequality,
\begin{align*}
I_{B}\lesssim &
\int \sum_{N_{1}\gtrsim N}
\n{\lra{\nabla_{x}}^{s_{c}}\lra{\nabla_{\xi}}^{s_{c}+\ga}P_{N_{1}}^{x}\wt{f}}_{L_{x}^{3}L_{\xi}^{3}}
\n{\lra{\nabla_{\xi}}^{s_{c}+\ga}\lra{\ve\nabla_{\xi}}^{\be}P_{\leq N_{1}}^{x}P_{M_{2}}^{\xi}\wt{g}}_{l_{M_{2}}^{2}L_{x}^{3}L_{\xi}^{3}}\\
&\n{\lra{\nabla_{\xi}}^{-s_{c}-\ga}|\ve\nabla_{\xi}|^{-\be}P_{N}^{x}h}_{L_{x}^{2}L_{\xi}^{2}} dt\\
\lesssim &
\int \n{\lra{\nabla_{x}}^{s_{c}}\lra{\nabla_{\xi}}^{s_{c}+\ga}\wt{f}}_{L_{x}^{3}L_{\xi}^{3}}
\sum_{N_{1}\gtrsim N} \frac{N^{s_{c}}\lra{\ve N}^{\al}}{N_{1}^{s_{c}}\lra{\ve N_{1}}^{\al}}\\
&\n{\lra{\nabla_{x}}^{s_{c}}\lra{\ve \nabla_{x}}^{\al}\lra{\nabla_{\xi}}^{s_{c}+\ga}\lra{\ve\nabla_{\xi}}^{\be}P_{N_{1}}^{x}
P_{M_{2}}^{\xi}\wt{g}}_{l_{M_{2}}^{2}L_{x}^{3}L_{\xi}^{3}}\\
&\n{\lra{\nabla_{x}}^{-s_{c}}\lra{\ve \nabla_{x}}^{-\al}\lra{\nabla_{\xi}}^{-s_{c}-\ga}|\ve\nabla_{\xi}|^{-\be}P_{N}^{x}h}_{L_{x}^{2}L_{\xi}^{2}} dt.
\end{align*}
With $s_{c}=\frac{1}{2}>0$, we use Cauchy-Schwarz in $N$ and $N_{1}$ to get
\begin{align*}
I_{B}\lesssim &
\int \n{\lra{\nabla_{x}}^{s_{c}}\lra{\nabla_{\xi}}^{s_{c}+\ga}\wt{f}}_{L_{x}^{3}L_{\xi}^{3}}\n{\lra{\nabla_{x}}^{s_{c}}\lra{\ve\nabla_{x}}^{\al}\lra{\nabla_{\xi}}^{s_{c}+\ga}\lra{\ve\nabla_{\xi}}^{\be}
P_{N_{1}}^{x}P_{M_{2}}^{\xi}\wt{g}}_{l_{N_{1}}^{2}l_{M_{2}}^{2}L_{x}^{3}L_{\xi}^{3}}\\
&\n{\lra{\nabla_{x}}^{-s_{c}}\lra{\ve \nabla_{x}}^{\al}\lra{\nabla_{\xi}}^{-s_{c}-\ga}|\ve\nabla_{\xi}|^{-\be}h}_{L_{x}^{2}L_{\xi}^{2}} dt.
\end{align*}
By H\"{o}lder inequality in the $t$-variable,
\begin{align*}
I_{B}\lesssim &\n{\lra{\nabla_{x}}^{s_{c}}\lra{\nabla_{\xi}}^{s_{c}+\ga}\wt{f}}_{L_{t}^{2}L_{x}^{3}L_{\xi}^{3}}\n{\lra{\nabla_{x}}^{s_{c}}\lra{\ve\nabla_{x}}^{\al}\lra{\nabla_{\xi}}^{s_{c}+\ga}\lra{\ve\nabla_{\xi}}^{\be}
P_{N_{1}}^{x}P_{M_{2}}^{\xi}\wt{g}}_{L_{t}^{2}l_{N_{1}}^{2}l_{M_{2}}^{2}L_{x}^{3}L_{\xi}^{3}}\\
&\n{\lra{\nabla_{x}}^{-s_{c}}\lra{\ve \nabla_{x}}^{\al}\lra{\nabla_{\xi}}^{-s_{c}-\ga}|\ve\nabla_{\xi}|^{-\be}h}_{L_{t}^{\infty}L_{x}^{2}L_{\xi}^{2}}.
\end{align*}
By Strichartz estimate \eqref{equ:strichartz estimate,U} and Minkowski inequality for the atomic $U$ space, we arrive at
\begin{align*}
I_{B}\lesssim & \n{\wt{f}}_{U_{L}^{2}H_{x}^{s_{c}}H_{\xi}^{s_{c}+\ga}}
\n{\lra{\ve\nabla_{x}}^{\al}\lra{\ve\nabla_{\xi}}^{\be}
P_{N_{1}}^{x}P_{M_{2}}^{\xi}\wt{g}}_{l_{N_{1}}^{2}l_{M_{2}}^{2}U_{L}^{2}H_{x}^{s_{c}}H_{\xi}^{s_{c}+\ga}}\\
&\n{\lra{\nabla_{x}}^{-s_{c}}\lra{\ve \nabla_{x}}^{\al}\lra{\nabla_{\xi}}^{-s_{c}-\ga}|\ve\nabla_{\xi}|^{-\be}h}_{L_{t}^{\infty}L_{x}^{2}L_{\xi}^{2}}\\
\lesssim &\n{\lra{\ve\nabla_{x}}^{\al}\lra{\ve\nabla_{\xi}}^{\be}\wt{f}}_{U_{L}^{2}H_{x}^{s_{c}}H_{\xi}^{s_{c}+\ga}}
\n{\lra{\ve\nabla_{x}}^{\al}\lra{\ve\nabla_{\xi}}^{\be}
\wt{g}}_{U_{L}^{2}H_{x}^{s_{c}}H_{\xi}^{s_{c}+\ga}}\\
&\n{\lra{\nabla_{x}}^{-s_{c}}\lra{\ve \nabla_{x}}^{\al}\lra{\nabla_{\xi}}^{-s_{c}-\ga}|\ve\nabla_{\xi}|^{-\be}h}_{L_{t}^{\infty}L_{x}^{2}L_{\xi}^{2}},
\end{align*}
which completes the proof of \eqref{equ:bilinear estimate,gain term,gwp,duality} for Case B. Therefore, we have done the proof of \eqref{equ:bilinear estimate,gain term,gwp}.

During the entire proof, the constant $C$ in \eqref{equ:bilinear estimate,gain term,gwp} is also independent of the parameters $\al$ and $\be$ if they are restricted to the finite interval that
$\al\in[\frac{1}{2},1]$ and $\be\in [\frac{1}{2},1]$.
\end{proof}

\subsection{Global Well-posedness for the Gain-term-only Boltzmann}\label{section:Small Data Global Well-posedness for Gain-term-only Boltzmann}
In this section, for small initial data in the scaling-critical space, we prove the global well-posedeness for the gain-term-only Boltzmann equation
\begin{equation}\label{equ:gain term,Boltzmann}
\left\{
\begin{aligned}
\left( \partial_t + v \cdot \nabla_x \right) f^{+} (t,x,v) =& Q^{+}(f^{+},f^{+}),\\
f^{+}(0,x,v)=& f_{0}(x,v).
\end{aligned}
\right.
\end{equation}
\begin{proposition}\label{lemma:gwp,gain-only,small data}
Let $s\in(1,\frac{3}{2})$ and $\ga\in[-\frac{1}{2},0]$.
There exists $\eta>0$, such that for all non-negative initial data $f_{0}\in L_{v}^{2,s+\ga}H_{x}^{s}$ satisfying
\begin{align}\label{equ:gwp,gain-only,smallness}
\n{\lra{\nabla_{x}}^{\frac{1}{2}}\lra{v}^{\frac{1}{2}+\ga}f_{0}}_{L_{x,v}^{2}}\leq \eta,
\end{align}
there exists a unique non-negative global solution $f^{+}$ to the gain-term-only Boltzmann equation \eqref{equ:gain term,Boltzmann} satisfying
$$f^{+}\in C([0,\infty);L_{v}^{2,s+\ga}H_{x}^{s}),\quad Q^{+}(f^{+},f^{+})\in L_{t,loc}^{1}(0,\infty;L_{v}^{2,s+\ga}H_{x}^{s}).$$
Moreover, for this solution, it holds that
\begin{align}
\n{\lra{\nabla_{x}}^{s}\lra{v}^{s+\ga}f^{+}}_{L_{t}^{\infty}(0,\infty;L_{v}^{2}L_{x}^{2})}<\infty,\label{equ:f+,regularity bound}\\
\n{\lra{\nabla_{x}}^{s}\lra{v}^{s+\ga}Q^{+}(f^{+},f^{+})}_{L_{t}^{1}(0,\infty;L_{v}^{2}L_{x}^{2})}<\infty.\label{equ:Q+,f+,regularity bound}
\end{align}

In particular, for $p\in [2,\frac{6}{3-2s}]$, we also have the integrability bounds
\begin{align}
\n{\lra{v}^{s+\ga}f^{+}}_{L_{t}^{\infty}(0,\infty;L_{v}^{2}L_{x}^{p})}<&\infty,\label{equ:f+,Lx6}\\
\n{\lra{v}^{s+\ga}Q^{+}(f^{+},f^{+})}_{L_{t}^{1}(0,\infty;L_{v}^{2}L_{x}^{p})}<&\infty,\label{equ:Q+,f+,Lx6}\\
\n{A\lrc{f^{+}}}_{L_{t}^{2}(0,\infty;L_{x}^{\infty}L_{v}^{\infty})}<&\infty,\label{equ:A,f+,Lxinfty}\\
\n{\lra{v}^{s+\ga}Q^{-}(f^{+},f^{+})}_{L_{t}^{1}(0,T;L_{v}^{2}L_{x}^{p})}\leq& C_{T},\label{equ:Q-,f+,Lx6} \quad \text{for $T\in [0,\infty)$.}
\end{align}

\end{proposition}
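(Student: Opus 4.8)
The plan is to pass to the inverse $v$-Fourier variable $\wt f=\cf_{v\mapsto\xi}^{-1}f^{+}$, solving the Duhamel equation
\[
\wt f(t)=U(t)\wt f_{0}+\int_{0}^{t}U(t-\tau)\wt Q^{+}(\wt f,\wt f)(\tau)\,d\tau,\qquad U(t)=e^{it\nabla_{\xi}\cdot\nabla_{x}},
\]
by contraction in the scaling-critical atomic space $X:=U_{L}^{2}H_{x}^{s_{c}}H_{\xi}^{s_{c}+\ga}$ ($s_{c}=\tfrac12$), and then upgrading to the regularity $H_{x}^{s}H_{\xi}^{s+\ga}$ by exploiting the $\ve$-uniformity in Lemma~\ref{lemma:bilinear estimate,gain term,gwp}. \textbf{Step 1 (critical well-posedness).} Since $U(t)$ is unitary on $L^{2}_{x,\xi}$, we have $\n{U(t)\wt f_{0}}_{X}=\n{\wt f_{0}}_{H_{x}^{s_{c}}H_{\xi}^{s_{c}+\ga}}=\n{\lra{\nabla_{x}}^{\frac12}\lra{v}^{\frac12+\ga}f_{0}}_{L^{2}_{x,v}}\le\eta$ by \eqref{equ:gwp,gain-only,smallness}. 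Applying the inhomogeneous Strichartz bound \eqref{equ:strichartz estimate,duhamel,U} with $\mathcal N=\lra{\nabla_{x}}^{s_{c}}\lra{\nabla_{\xi}}^{s_{c}+\ga}\wt Q^{+}(\wt f,\wt g)$ (the multiplier commuting with $U$ and with $\int_{0}^{t}U(t-\tau)\,\cdot$), and then the bilinear estimate \eqref{equ:bilinear estimate,gain term,gwp} with $\ve=\al=\be=0$, gives
\[
\bbn{\int_{0}^{t}U(t-\tau)\wt Q^{+}(\wt f,\wt g)\,d\tau}_{X}\lesssim\n{\wt f}_{X}\n{\wt g}_{X}.
\]
Hence for $\eta$ small the Duhamel map is a contraction on $\{\n{\wt f}_{X}\le 2\eta\}$, producing a unique global $\wt f\in X$, equivalently a solution $f^{+}=\cf_{\xi\mapsto v}\wt f$ of \eqref{equ:gain term,Boltzmann}; the embedding $U_{L}^{2}\hookrightarrow L_{t}^{\infty}$ and the Duhamel representation give $f^{+}\in C_{b}([0,\infty);L_{v}^{2,s_{c}+\ga}H_{x}^{s_{c}})$.

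\textbf{Step 2 (non-negativity).} In the original variables the Picard iterates are $f^{(0)}=S(t)f_{0}$ and $f^{(n+1)}=S(t)f_{0}+\int_{0}^{t}S(t-\tau)Q^{+}(f^{(n)},f^{(n)})\,d\tau$, with $S(t)=e^{-tv\cdot\nabla_{x}}$, and they converge to $f^{+}$ in $L_{t}^{\infty}L^{2}_{x,v}$. Since $S(\cdot)$ preserves positivity and $Q^{+}$ sends pairs of non-negative functions to non-negative functions (the integrand $f(v^{*})g(u^{*})B\ge0$), induction gives $f^{(n)}\ge0$, hence $f^{+}\ge0$ a.e. \textbf{Step 3 (persistence of regularity).} Fix $\al=\be:=s-\tfrac12\in(\tfrac12,1)$ and re-run the contraction in the weighted space $Y_{\ve}:=\{\wt f:\n{\lra{\ve\nabla_{x}}^{\al}\lra{\ve\nabla_{\xi}}^{\be}\wt f}_{X}<\infty\}$; by Lemma~\ref{lemma:bilinear estimate,gain term,gwp} the bilinear bound holds on $Y_{\ve}$ with a constant independent of $\ve$ and of $\al,\be\in[\tfrac12,1]$, so the contraction radius and the threshold $\eta$ are unchanged. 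The subadditivity $\lra{\ve y}^{\al}\le 1+\ve^{\al}|y|^{\al}$ yields $\lra{\ve\nabla_{x}}^{\al}\lra{\nabla_{x}}^{s_{c}}\lesssim\lra{\nabla_{x}}^{s_{c}}+\ve^{\al}\lra{\nabla_{x}}^{s}$ and likewise in $\xi$, whence
\[
\n{\wt f_{0}}_{Y_{\ve}}\le\n{\wt f_{0}}_{H_{x}^{s_{c}}H_{\xi}^{s_{c}+\ga}}+C\ve^{s-\frac12}\n{f_{0}}_{L_{v}^{2,s+\ga}H_{x}^{s}}\le 2\eta
\]
for $\ve=\ve(\n{f_{0}}_{L_{v}^{2,s+\ga}H_{x}^{s}})$ small enough. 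The contraction then produces a solution in $Y_{\ve}$, which must coincide with the $X$-solution of Step~1 since $Y_{\ve}\hookrightarrow X$; as $\lra{\nabla}^{\al}\le\ve^{-\al}\lra{\ve\nabla}^{\al}$ for $\ve\le1$, we conclude $\wt f\in U_{L}^{2}H_{x}^{s}H_{\xi}^{s+\ga}$, which is \eqref{equ:f+,regularity bound}; applying \eqref{equ:bilinear estimate,gain term,gwp} once more with $\ve=1$, $\al=\be=s-\tfrac12$ gives \eqref{equ:Q+,f+,regularity bound}. The $L_{t}^{\infty}$- and $L_{t}^{1}$-control together with strong continuity of $S(t)$ upgrade $f^{+}$ to $C([0,\infty);L_{v}^{2,s+\ga}H_{x}^{s})$ and give $Q^{+}(f^{+},f^{+})\in L_{t,loc}^{1}(0,\infty;L_{v}^{2,s+\ga}H_{x}^{s})$.

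\textbf{Step 4 (integrability bounds and uniqueness).} Bounds \eqref{equ:f+,Lx6} and \eqref{equ:Q+,f+,Lx6} follow from \eqref{equ:f+,regularity bound} and \eqref{equ:Q+,f+,regularity bound} by the Sobolev embedding $H_{x}^{s}\hookrightarrow L_{x}^{p}$, $p\in[2,\tfrac{6}{3-2s}]$, applied slicewise in $v$ and then taking $L^{2}_{v}$. For \eqref{equ:A,f+,Lxinfty}, integrating out $\om$ writes $A[f^{+}](t,x,\cdot)=c_{0}\,f^{+}(t,x,\cdot)*|\cdot|^{\ga}$, which is controlled in $L_{t}^{2}L_{x}^{\infty}L_{v}^{\infty}$ by a Strichartz-type estimate for $A[\cdot]$ (using the dispersion of $U(t)$) together with the bounds already obtained. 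Finally \eqref{equ:Q-,f+,Lx6} follows from $Q^{-}(f^{+},f^{+})=f^{+}A[f^{+}]$, H\"older in $(x,v)$, \eqref{equ:A,f+,Lxinfty}, \eqref{equ:f+,Lx6}, and Cauchy--Schwarz in $t$ on $[0,T]$, which produces the factor $T^{1/2}$ and hence the $T$-dependent constant. Uniqueness in the stated class comes from the local-in-time uniqueness of the contraction together with a standard continuation argument based on \eqref{equ:bilinear estimate,gain term,gwp}.

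\textbf{Main obstacle.} The genuinely hard input, the scaling-invariant bilinear estimate, is already available as Lemma~\ref{lemma:bilinear estimate,gain term,gwp}, so what remains is assembly; the delicate points are Step~3 — using the $\ve$-uniformity of that estimate to keep the $Y_{\ve}$-contraction radius of size $O(\eta)$ while absorbing the large-but-finite higher Sobolev norm of $f_{0}$ into $\ve^{s-\frac12}$ — and the estimate \eqref{equ:A,f+,Lxinfty} for the loss coefficient $A[f^{+}]$ in the non-dispersive $L_{x}^{\infty}L_{v}^{\infty}$ norm, which must be extracted from the Strichartz structure since the available $H_{x}^{s}H_{\xi}^{s+\ga}$ regularity ($s<\tfrac32$) does not embed into $L^{\infty}$.
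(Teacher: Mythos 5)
Your proposal is correct and follows essentially the same route as the paper: a contraction for the Duhamel map in the atomic $U_{L}^{2}$-based space with $\ve$-weighted derivatives, relying on the $\ve$-uniform scaling-invariant bilinear estimate of Lemma~\ref{lemma:bilinear estimate,gain term,gwp} (the paper runs a single contraction in its space $X^{s,\ve}$ and obtains $\n{\wt{f}_{0}}_{X^{s,\ve}}\leq 2\eta$ by dominated convergence, while you split this into a critical contraction plus a weighted one and use the subadditivity bound $\lra{\ve y}^{\al}\leq 1+\ve^{\al}|y|^{\al}$ — the same mechanism), non-negativity via the Picard iterates, Sobolev embedding for \eqref{equ:f+,Lx6}--\eqref{equ:Q+,f+,Lx6}, and H\"older in time for \eqref{equ:Q-,f+,Lx6}. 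The ``Strichartz-type estimate for $A[\cdot]$'' you invoke for \eqref{equ:A,f+,Lxinfty} is exactly the paper's Lemma~\ref{lemma:A,f,Lx,estimate} (proved via the Duhamel formula, the endpoint Hardy--Littlewood--Sobolev inequality, Sobolev embedding $W^{s+\ga,3}\hookrightarrow L^{\frac{3}{-\ga\pm\delta}}$, and the Strichartz estimate), so your outline matches the paper's proof.
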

\begin{proof}
By Plancherel identity, it is equivalent to prove the well-posedness for the gain-term-only Boltzmann equation on the $(x,\xi)$ side, that is,
\begin{equation}\label{equ:gain term,Boltzmann,fourier}
\left\{
\begin{aligned}
\pa_{t}\wt{f}^{+}-i\nabla_{\xi}\cdot \nabla_{x}\wt{f}^{+}=&\wt{Q}(\wt{f}^{+},\wt{f}^{+}),\\
\wt{f}^{+}(0,x,\xi)=& \wt{f}_{0}(x,\xi).
\end{aligned}
\right.
\end{equation}

For simplicity, we take the notation
\begin{align*}
\n{\wt{f}^{+}}_{X^{s,\ve}}=:\n{\lra{\nabla_{x}}^{s_{c}}\lra{\ve\nabla_{x}}^{s-s_{c}}\lra{\nabla_{\xi}}^{s_{c}+\ga}\lra{\ve \nabla_{\xi}}^{s-s_{c}}\wt{f}}_{L_{x,v}^{2}},
\end{align*}
where $\ve$ is to be determined. Our goal is to prove the well-posedness in the $X^{s,\ve}$ space.
Let
$$B=\lr{\wt{f}^{+}:\n{\wt{f}^{+}}_{U_{L}^{2}X^{s,\ve}}\leq 2\n{\wt{f}_{0}}_{X^{s,\ve}}}$$
and define the nonlinear map
\begin{align}\label{equ:nonlinear map,gwp}
\Phi(\wt{f}^{+})=U(t)\wt{f}_{0}+\int_{0}^{t}U(t-\tau)\wt{Q}^{+}(\wt{f}^{+},\wt{f}^{+})(\tau)d\tau.
\end{align}
By the contraction mapping principle, to prove the existence of a unique solution in $B$, we need to prove that $\Phi:B\mapsto B$ and the map $\Phi$ is a contraction.

First, by the definition of the atomic $U_{L}^{2}$ space, we have
\begin{align}\label{equ:closed estimate,gwp,Linfty}
\n{\Phi(\wt{f}^{+})}_{U_{L}^{2}X^{s,\ve}}\leq & \n{\wt{f}_{0}}_{X^{s,\ve}}+\bbn{\int_{0}^{t}U(t-\tau)\wt{Q}^{+}(\wt{f}^{+},\wt{f}^{+})(\tau)d\tau}_{U_{L}^{2}X^{s,\ve}}.
\end{align}
By estimate \eqref{equ:strichartz estimate,duhamel,U} in Proposition \ref{lemma:strichartz estimate,U-V} and
the scaling-invariant estimate \eqref{equ:bilinear estimate,gain term,gwp} in Lemma \ref{lemma:bilinear estimate,gain term,gwp}, we get
\begin{align}\label{equ:closed estimate,gwp,Linfty,duhamel}
\bbn{\int_{0}^{t}U(t-\tau)\wt{Q}^{+}(\wt{f}^{+},\wt{f}^{+})(\tau)d\tau}_{U_{L}^{2}X^{s,\ve}}\lesssim
\bn{Q^{+}(\wt{f}^{+},\wt{f}^{+})}_{L_{t}^{1}X^{s,\ve}}
\lesssim& \n{\wt{f}^{+}}_{U_{L}^{2}X^{s,\ve}}\n{\wt{f}^{+}}_{U_{L}^{2}X^{s,\ve}}.
\end{align}
Combining estimates \eqref{equ:closed estimate,gwp,Linfty} and \eqref{equ:closed estimate,gwp,Linfty,duhamel}, we reach
\begin{align*}
\n{\Phi(\wt{f}^{+})}_{U_{L}^{2}X^{s,\ve}}\leq \n{\wt{f}_{0}}_{X^{s,\ve}}+C\n{\wt{f}_{0}}_{X^{s,\ve}}^{2},
\end{align*}
where the constant $C$ is independent of $\ve$ and $s$.
By Plancherel identity, we have that
\begin{align*}
\n{\wt{f}_{0}}_{X^{s,\ve}}=
\n{\lra{y}^{s_{c}}\lra{\ve y}^{s-s_{c}}\lra{v}^{s_{c}+\ga}\lra{\ve v}^{s-s_{c}}\mathcal{F}_{x\mapsto y}(f_{0})}_{L_{y,v}^{2}}.
\end{align*}
Since $f_{0}\in L_{v}^{2,s+\ga}H_{x}^{s}$, by the dominated convergence theorem, we can choose $\ve$ sufficiently small to obtain
\begin{align}\label{equ:gain-only,gwp,smallness}
\n{\wt{f}_{0}}_{X^{s,\ve}}\leq 2 \n{\lra{\nabla_{x}}^{s_{c}}\lra{v}^{s_{c}+\ga}f_{0}}_{L_{x,v}^{2}}\leq 2\eta.
\end{align}
Then we choose $\eta$ (only depending on the constant C) small such that $2C\eta\leq \frac{1}{2}$. Thus,
$\Phi$ maps the set $B$ into itself.

To prove $\Phi$ is a contraction, for $\wt{f}^{+}$, $\wt{g}^{+}\in B$, we have
\begin{align*}
\Phi(\wt{f}^{+})-\Phi(\wt{g}^{+})=\int_{0}^{t}U(t-\tau)\lrc{\wt{Q}^{+}(\wt{f}^{+}-\wt{g}^{+},\wt{f}^{+})(\tau)+\wt{Q}^{+}(\wt{g}^{+},\wt{f}^{+}-\wt{g}^{+})(\tau)}d\tau,
\end{align*}
and hence get
\begin{align}\label{equ:closed estimate,gwp,contraction}
\n{\Phi(\wt{f}^{+})-\Phi(\wt{g}^{+})}_{U_{L}^{2}X^{s,\ve}}
\lesssim &\n{Q^{+}(\wt{f}^{+}-\wt{g}^{+},\wt{f}^{+})}_{L_{t}^{1}X^{s,\ve}}+\n{Q^{+}(\wt{g}^{+},\wt{f}^{+}-\wt{g}^{+})}_{L_{t}^{1}X^{s,\ve}}\\
\lesssim &\n{\wt{f}^{+}-\wt{g}^{+}}_{U_{L}^{2}X^{s,\ve}}\lrs{\n{\wt{f}^{+}}_{U_{L}^{2}X^{s,\ve}}+\n{\wt{g}^{+}}_{U_{L}^{2}X^{s,\ve}}}\notag\\
\leq& 4\n{\wt{f}^{+}-\wt{g}^{+}}_{U_{L}^{2}X^{s,\ve}}\n{\wt{f}_{0}}_{X^{s,\ve}}.\notag
\end{align}
Using \eqref{equ:gain-only,gwp,smallness} again, we conclude that $\Phi$ is a contraction map and there exists a unique global solution $\wt{f}^{+}$ to the gain-term-only Boltzmann equation \eqref{equ:gain term,Boltzmann,fourier} in the set $B$. For the non-negativity of the solution, due to that $f_{0}(x,v)\geq 0$, the iteration sequences on the $(x,v)$ side are non-negative, that is,
\begin{align*}
f_{n+1}^{+}(t)=S(t)f_{0}+\int_{0}^{t} S(t-\tau)Q^{+}(f_{n}^{+},f_{n}^{+})d\tau \geq 0, \quad S(t)=e^{-tv\cdot\nabla_{x}},
\end{align*}
which implies that the solution $f^{+}(t)$ is non-negative for all $t\in[0,\infty)$.

By the scaling-invariant estimate \eqref{equ:bilinear estimate,gain term,gwp} in Lemma \ref{lemma:bilinear estimate,gain term,gwp}, we have
\begin{align}
\bn{\wt{Q}^{+}(\wt{f}^{+},\wt{f}^{+})}_{L_{t}^{1}X^{s,\ve}}
\lesssim& \n{\wt{f}^{+}}_{U_{L}^{2}X^{s,\ve}}\n{\wt{f}^{+}}_{U_{L}^{2}X^{s,\ve}}.
\end{align}
Then using Plancherel identity, we arrive at the regularity bounds \eqref{equ:f+,regularity bound}--\eqref{equ:Q+,f+,regularity bound}.

Next, we get into the analysis of integrability bounds \eqref{equ:f+,Lx6}--\eqref{equ:Q-,f+,Lx6}.
By the Sobolev inequality that $H^{s}\hookrightarrow L^{p}$ with $p\in [2,\frac{6}{3-2s}]$, we immediately obtain estimates \eqref{equ:f+,Lx6}--\eqref{equ:Q+,f+,Lx6} by using the regularity bounds \eqref{equ:f+,regularity bound}--\eqref{equ:Q+,f+,regularity bound}.

For \eqref{equ:A,f+,Lxinfty}, by Lemma \ref{lemma:A,f,Lx,estimate} which we postpone to the end of the section, we have
\begin{align*}
\n{A\lrc{f^{+}}}_{L_{t}^{2}(0,\infty;L_{x}^{\infty}L_{v}^{\infty})}\lesssim& \n{\lra{\nabla_{x}}^{s}\lra{v}^{s+\ga}f_{0}}_{L_{x}^{2}L_{v}^{2}}
+\n{\lra{\nabla_{x}}^{s}\lra{v}^{s+\ga}\mathcal{N}\lrc{f^{+}}}_{L_{t}^{1}(0,\infty;L_{x}^{2}L_{v}^{2})}\\
\lesssim& \n{\lra{\nabla_{x}}^{s}\lra{v}^{s+\ga}f_{0}}_{L_{x}^{2}L_{v}^{2}}
+\n{\lra{\nabla_{x}}^{s}\lra{v}^{s+\ga}Q^{+}(f^{+},f^{+})}_{L_{t}^{1}(0,\infty;L_{v}^{2}L_{x}^{2})}.
\end{align*}
By using the regularity bound \eqref{equ:Q+,f+,regularity bound} for $Q^{+}(f^{+},f^{+})$, we complete the proof of \eqref{equ:A,f+,Lxinfty}.

For the integrability bound \eqref{equ:Q-,f+,Lx6}, due to that $Q^{-}(f^{+},f^{+})=f^{+}A[f^{+}]$, we use H\"{o}lder inequality to get
\begin{align*}
&\n{\lra{v}^{s+\ga}Q^{-}(f^{+},f^{+})}_{L_{t}^{1}(0,T;L_{v}^{2}L_{x}^{p})}\\
\lesssim& |T|^{\frac{1}{2}}\n{\lra{v}^{s+\ga}
f^{+}}_{L_{t}^{\infty}(0,T;L_{v}^{2}L_{x}^{p})}\n{A\lrc{f^{+}}}_{L_{t}^{2}(0,T;L_{x}^{\infty}L_{v}^{\infty})}\lesssim C_{T},
\end{align*}
where in the last inequality we have used the bounds \eqref{equ:f+,Lx6} and \eqref{equ:A,f+,Lxinfty}.
\end{proof}

In the end, we give the proof of the estimate on $A[f]$,
which has been used in the analysis of the integrability bounds in Proposition \ref{lemma:gwp,gain-only,small data}.
\begin{lemma}\label{lemma:A,f,Lx,estimate}
For $s>1$, $\ga\in(-1,0]$, we have
\begin{align}\label{equ:A,f,Lx3,estimate}
\n{A\lrc{f}}_{L_{t}^{2}L_{x}^{3}L_{v}^{\infty}}\lesssim \n{\lra{v}^{s+\ga}f_{0}}_{L_{x}^{2}L_{v}^{2}}+
\n{\lra{v}^{s+\ga}\mathcal{N}\lrc{f}}_{L_{t}^{1}L_{x}^{2}L_{v}^{2}},
\end{align}
where $f(t)$ satisfies the Duhamel formula that
$$f(t)=S(t)f_{0}+\int_{0}^{t}S(t-\tau)\mathcal{N}\lrc{f}(\tau)d\tau,\quad S(t)=e^{-tv\cdot\nabla_{x}}.$$
In particular, we have
\begin{align}\label{equ:A,f,Lxinfty,estimate}
\n{A\lrc{f}}_{L_{t}^{2}L_{x}^{\infty}L_{v}^{\infty}}\lesssim& \n{\lra{\nabla_{x}}^{s}A\lrc{f}}_{L_{t}^{2}L_{x}^{3}L_{v}^{\infty}}\\
\lesssim&  \n{\lra{\nabla_{x}}^{s}\lra{v}^{s+\ga}f_{0}}_{L_{x}^{2}L_{v}^{2}}
+\n{\lra{\nabla_{x}}^{s}\lra{v}^{s+\ga}\mathcal{N}\lrc{f}}_{L_{t}^{1}L_{x}^{2}L_{v}^{2}}.\notag
\end{align}

\end{lemma}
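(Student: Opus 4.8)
The plan is to reduce $A[f]$ to a velocity convolution, peel off the Duhamel integral, and then prove the resulting estimate for the free transport flow by combining the dispersive machinery of Section~\ref{section:The Gain-term-only Boltzmann Equation} with a decomposition in the velocity scale. First, since $0\le\mathbf{b}(\cos\theta)\le C|\cos\theta|$ and $\int_{\mathbb{S}^{2}}|\tfrac{w}{|w|}\cdot\omega|\,d\omega$ is a finite constant, the angular integration gives the pointwise bound
$$A[f](t,x,v)\ \lesssim\ \int_{\R^{3}}|u-v|^{\ga}f(t,x,u)\,du\ =\ (|\cdot|^{\ga}*_{v}f)(t,x,v).$$
Inserting the Duhamel formula, using $S(t-\tau)S(\sigma)=S(t-\tau+\sigma)$, and applying Minkowski's integral inequality in $\tau$, the estimate \eqref{equ:A,f,Lx3,estimate} reduces to the homogeneous bound
$$\bn{|\cdot|^{\ga}*_{v}S(t)g_{0}}_{L_{t}^{2}L_{x}^{3}L_{v}^{\infty}}\ \lesssim\ \bn{\lra{v}^{s+\ga}g_{0}}_{L_{x}^{2}L_{v}^{2}},$$
since then the inhomogeneous contribution is controlled by $\int_{0}^{\infty}\bn{\lra{v}^{s+\ga}\mathcal{N}[f](\tau)}_{L_{x}^{2}L_{v}^{2}}\,d\tau=\bn{\lra{v}^{s+\ga}\mathcal{N}[f]}_{L_{t}^{1}L_{x}^{2}L_{v}^{2}}$.

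For the homogeneous bound I would decompose the datum dyadically in the velocity scale, $g_{0}=\sum_{R\ge1}g_{0,R}$ with $g_{0,R}$ supported where $\lra{v}\sim R$, so that $\n{g_{0,R}}_{L^{2}_{x,v}}\lesssim R^{-(s+\ga)}\n{\lra{v}^{s+\ga}g_{0,R}}_{L^{2}_{x,v}}$ and $\sum_{R}\n{\lra{v}^{s+\ga}g_{0,R}}_{L^{2}_{x,v}}^{2}\lesssim\n{\lra{v}^{s+\ga}g_{0}}_{L^{2}_{x,v}}^{2}$. Because $S(t)$ preserves the velocity support, after rescaling velocities to unit scale it suffices to prove $\n{\,|\cdot|^{\ga}*_{v}S(t)h_{0}\,}_{L_{t}^{2}L_{x}^{3}L_{v}^{\infty}}\lesssim\n{h_{0}}_{L^{2}_{x,v}}$ for $h_{0}$ with $|v|\lesssim1$; tracking the two scaling parameters through the rescaling produces a factor $R^{1+\ga}$ (the exponent at which the homogeneous estimate is scale-invariant) on the $R$-piece, and since $1+\ga<s+\ga$ the sum over $R$ converges by Cauchy--Schwarz --- this is precisely where $s>1$ rather than $s\ge1$ is used, while $\ga>-1$ is what will make the kernel integrals below converge. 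For the unit-scale estimate I would use the layer-cake representation $|w|^{\ga}=|\ga|\int_{0}^{\infty}\lambda^{\ga-1}\mathbf{1}_{\{|w|<\lambda\}}\,d\lambda$, which rewrites $|\cdot|^{\ga}*_{v}S(t)h_{0}$ as a superposition over $\lambda$ of genuine partial velocity averages $\int_{\{|u-v|<\lambda\}}S(t)h_{0}(x,u)\,du$; each of these is controlled in $L_{t}^{2}L_{x}^{3}$ by the velocity-averaging (Castella--Perthame-type) Strichartz estimate for the kinetic transport equation --- which on the $\xi$-side is the restriction to $\xi=0$ of the band-limited flow $e^{it\nabla_{\xi}\cdot\nabla_{x}}$ and is derivable from Proposition~\ref{lemma:strichartz estimate,U-V} by Bernstein in $\xi$ and a $TT^{*}$ argument --- with a $\lambda$-power that makes the $\lambda$-integral converge exactly for $\ga>-1$, in the same spirit as the Hardy--Littlewood--Sobolev step in Lemma~\ref{lemma:Q+,bilinear estimate}.

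The main obstacle is getting the supremum over $v$ \emph{inside} the space-time norm while keeping the dispersive decay: a pointwise-in-$(t,x)$ bound of $\n{A[f](t,x,\cdot)}_{L_{v}^{\infty}}$ by a non-averaged quantity such as $\n{f(t,x,\cdot)}_{L^{2}_{v}}$ destroys all time decay, so the $L_{v}^{\infty}$ must be taken jointly with the velocity average --- for instance through a maximal version of the partial-average estimate and a finitely-overlapping cover at each scale $\lambda$, exploiting that the velocity support of $h_{0}$ is compact --- and this is the technical heart where the dispersive/$U$--$V$ framework is essential. Finally, the second displayed estimate \eqref{equ:A,f,Lxinfty,estimate} is immediate: $\lra{\nabla_{x}}^{s}$ commutes with $S(t)=e^{-tv\cdot\nabla_{x}}$ (an $x$-Fourier multiplier whose symbol depends only on $v$) and with $A[\cdot]$ (which acts only in $v$), so $\lra{\nabla_{x}}^{s}f$ satisfies the same Duhamel formula with data $\lra{\nabla_{x}}^{s}f_{0}$ and forcing $\lra{\nabla_{x}}^{s}\mathcal{N}[f]$; applying \eqref{equ:A,f,Lx3,estimate} to it and then the Sobolev embedding $W^{s,3}(\R^{3}_{x})\hookrightarrow L_{x}^{\infty}$, valid since $s>1$, gives \eqref{equ:A,f,Lxinfty,estimate}.
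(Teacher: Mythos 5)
Your reduction is the same as the paper's: bound $A[f]$ by the velocity convolution $|\cdot|^{\ga}\ast_{v}f$, peel off the Duhamel integral with Minkowski, and reduce \eqref{equ:A,f,Lx3,estimate} to the homogeneous bound $\n{|\cdot|^{\ga}\ast_{v}S(t)g_{0}}_{L_{t}^{2}L_{x}^{3}L_{v}^{\infty}}\lesssim\n{\lra{v}^{s+\ga}g_{0}}_{L_{x,v}^{2}}$; and your treatment of \eqref{equ:A,f,Lxinfty,estimate} (commuting $\lra{\nabla_{x}}^{s}$ with $S(t)$ and $A$, then $W^{s,3}\hookrightarrow L^{\infty}_{x}$) is exactly the paper's. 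The genuine gap is that the homogeneous bound itself is never proved. You decompose dyadically in $\lra{v}$, rescale to unit velocity support (your scaling factor $R^{1+\ga}$ and the summation using $s>1$ do check out), and then for the unit-scale estimate you invoke (i) a "Castella--Perthame-type" velocity-averaging Strichartz estimate at the exponent pair $L^{2}_{t}L^{3}_{x}$ with $L^{2}_{x,v}$ data, and (ii) an unspecified "maximal version" of the partial-average estimate, with a covering argument, to pull the supremum over $v$ inside. Neither is available as stated: the endpoint $q=2$ Strichartz estimate for velocity averages of free transport with $L^{2}_{x,v}$ data is known to fail in general, so it must be re-derived using the extra structure you have (compact velocity support / band-limitedness in $\xi$), which you only gesture at; and the maximal-in-$v$ averaging estimate is precisely the claim you yourself identify as "the technical heart" and then leave unproved. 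As written, the proof of \eqref{equ:A,f,Lx3,estimate} therefore rests on two unestablished estimates.

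For comparison, the paper disposes of both difficulties in a few lines by going to the Fourier side in $v$: since $\n{g}_{L_{v}^{\infty}}\lesssim\n{\mathcal{F}^{-1}_{v\mapsto\xi}g}_{L_{\xi}^{1}}$ and the $v$-convolution becomes multiplication by $|\xi|^{-3-\ga}$, the supremum over $v$ is free (no maximal function is needed), and the remaining $L^{1}_{\xi}$ integral is handled by the endpoint Hardy--Littlewood--Sobolev inequality with the $\pm\delta$ split, the Sobolev embedding $W^{s+\ga,3}_{\xi}\hookrightarrow L^{3/(-\ga\pm\delta)}_{\xi}$ (this is where the weight $\lra{v}^{s+\ga}$ and $s>1$ enter), and the $(q,p)=(2,3)$ Strichartz estimate \eqref{equ:strichartz estimate,linear} for $e^{it\nabla_{\xi}\cdot\nabla_{x}}$, which does hold at $q=2$ since $\sigma=3$ in Theorem \ref{lemma:strichartz estimate,keel-tao}. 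Incidentally, your own scheme could be repaired along the same lines: pairing the partial average against $\chi_{B(v,\lambda)}$ in $\xi$ and noting that $|\widehat{\chi_{B(v,\lambda)}}|$ is independent of the center $v$ makes the sup over $v$ trivial, after which your layer-cake in $\lambda$ essentially reassembles the kernel $|\xi|^{-3-\ga}$ and Bernstein in $\xi$ (legitimate after your rescaling, since both target exponents $3/(-\ga\pm\delta)$ exceed $3$) can replace the Sobolev embedding --- but at that point you have reproduced the paper's argument with extra scaffolding (the dyadic decomposition in $v$) that the weighted Sobolev embedding renders unnecessary.
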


\begin{proof}

From the Duhamel formula that
\begin{align}
A\lrc{f}=A\lrc{S(t)f_{0}+\int_{0}^{t}S(t-\tau)\mathcal{N}\lrc{f}(\tau)d\tau},
\end{align}
it is sufficient to prove
\begin{align}\label{equ:A,f,Lx3,linear term}
\n{A\lrc{S(t)f_{0}}}_{L_{t}^{2}L_{x}^{3}L_{v}^{\infty}}\leq \n{\lra{v}^{s+\ga}f_{0}}_{L_{x}^{2}L_{v}^{2}}.
\end{align}
Indeed, for the nonlinear term, by using \eqref{equ:A,f,Lx3,linear term} we have
\begin{align*}
\bbn{A\lrc{\int_{0}^{t}S(t-\tau)\mathcal{N}\lrc{f}(\tau)d\tau}}_{L_{t}^{2}L_{x}^{3}L_{v}^{\infty}}
\leq &
\int_{0}^{\infty}\bn{A\lrc{S(t-\tau)\mathcal{N}\lrc{f}(\tau)}}_{L_{t}^{2}L_{x}^{3}L_{v}^{\infty}}d\tau\\
\lesssim &\int_{0}^{\infty}\n{\lra{v}^{s+\ga}S(-\tau)\mathcal{N}\lrc{f}(\tau)}_{L_{x}^{2}L_{v}^{2}}d\tau\\
\lesssim&\n{\lra{v}^{s+\ga}\mathcal{N}\lrc{f}}_{L_{t}^{1}L_{x}^{2}L_{v}^{2}}.
\end{align*}
To prove \eqref{equ:A,f,Lx3,linear term}, noting that
$$A\lrc{f}=\n{\textbf{b}}_{L^{1}(\mathbb{S}^{2})}\int_{\R^{3}} \frac{f(u)}{|u-v|^{-\ga}}du,$$
 we use the inequality that $\n{f}_{L_{v}^{\infty}}\lesssim \n{\mathcal{F}^{-1}_{v\mapsto \xi}(f)}_{L_{\xi}^{1}}$ to obtain
\begin{align*}
\n{A\lrc{S(t)f_{0}}}_{L_{t}^{2}L_{x}^{3}L_{v}^{\infty}}
\lesssim& \n{\mathcal{F}^{-1}_{v\mapsto \xi}\lrs{A\lrc{S(t)f_{0}}}}_{L_{t}^{2}L_{x}^{3}L_{\xi}^{1}}\\
\lesssim&\bbn{\frac{(U(t)\wt{f}_{0})(x,\xi)}{|\xi|^{3+\ga}}}_{L_{t}^{2}L_{x}^{3}L_{\xi}^{1}}\\
\lesssim&\n{U(t)\wt{f}_{0}}_{L_{t}^{2}L_{x}^{3}L_{\xi}^{\frac{3}{-\ga+\delta}}}^{\frac{1}{2}}
\n{U(t)\wt{f}_{0}}_{L_{t}^{2}L_{x}^{3}L_{\xi}^{\frac{3}{-\ga-\delta}}}^{\frac{1}{2}},
\end{align*}
where in the last inequality we have used the endpoint Hardy-Littlewood-Sobolev inequality that
\begin{align*}
\int  \frac{|u(\xi)|}{|\xi|^{3+\ga}}d\xi \lesssim\n{u}_{L^{\frac{3}{-\ga+\delta}}}^{\frac{1}{2}}\n{u}_{L^{\frac{3}{-\ga-\delta}}}^{\frac{1}{2}}.
\end{align*}
With $s>1$, the Sobolev inequality $W^{s+\ga,3}\hookrightarrow L^{\frac{3}{-\ga\pm \delta}}$ holds for small $\delta$. Hence, we obtain
\begin{align*}
\n{A\lrc{S(t)f_{0}}}_{L_{t}^{2}L_{x}^{3}L_{v}^{\infty}}\lesssim \n{\lra{\nabla_{\xi}}^{s+\ga}U(t)\wt{f}_{0}}_{L_{t}^{2}L_{x}^{3}L_{\xi}^{3}}.
\end{align*}
Then by Strichartz estimate \eqref{equ:strichartz estimate,linear}, we arrive at
\begin{align*}
\n{A\lrc{S(t)f_{0}}}_{L_{t}^{2}L_{x}^{3}L_{v}^{\infty}}\lesssim \n{\lra{\nabla_{\xi}}^{s+\ga}\wt{f}_{0}}_{L_{x,\xi}^{2}}=
\n{\lra{\nabla_{\xi}}^{s+\ga}f_{0}}_{L_{x,v}^{2}}.
\end{align*}

For estimate \eqref{equ:A,f,Lxinfty,estimate}, by the Sobolev inequality $W^{s,3}\hookrightarrow L^{\infty}$ with $s>1$, we get
\begin{align*}
\n{A\lrc{f}}_{L_{t}^{2}L_{x}^{\infty}L_{v}^{\infty}}\lesssim& \n{\lra{\nabla_{x}}^{s}A\lrc{f}}_{L_{t}^{2}L_{v}^{\infty}L_{x}^{3}}\lesssim \n{\lra{\nabla_{x}}^{s}A\lrc{f}}_{L_{t}^{2}L_{x}^{3}L_{v}^{\infty}}.
\end{align*}
Then using \eqref{equ:A,f,Lx3,estimate}, we complete the proof of \eqref{equ:A,f,Lxinfty,estimate}.

\end{proof}

\section{Global Existence of the Boltzmann Equation}\label{section:Global Existence of the Boltzmann Equation}
In the section, we establish the global existence and scattering of the Boltzmann equation by making use of
the iterative method of Kaniel--Shinbrot \cite{kaniel78the}.

\begin{proposition}\label{lemma:global existence,full}
Let $s\in (1,\frac{3}{2})$ and $\ga\in[-\frac{1}{2},0]$. There exists $\eta>0$, such that for all non-negative initial data $f_{0}\in L_{v}^{2,s+\ga}H_{x}^{s}$ with
\begin{align*}
\n{\lra{\nabla_{x}}^{s_{c}}\lra{v}^{s_{c}+\ga}f_{0}}_{L_{x,v}^{2}}\leq \eta,
\end{align*}
there exists a non-negative global solution $f\in C([0,\infty);L_{x}^{2}L_{v}^{2,s+\ga})$ to the Boltzmann equation satisfying
\begin{align}
&\n{\lra{v}^{s+\ga}f}_{L_{t}^{\infty}(0,\infty;L_{v}^{2}L_{x}^{p})}<\infty ,\label{equ:f,Lx6}\\
&\n{\lra{v}^{s+\ga}Q^{+}(f,f)}_{L_{t}^{1}(0,\infty;L_{v}^{2}L_{x}^{p})}<\infty,\label{equ:Q+,f,Lx6}\\
&\n{A\lrc{f}}_{L_{t}^{2}(0,\infty;L_{x}^{\infty}L_{v}^{\infty})}< \infty,\label{equ:A,f,Lxinfty}\\
&\n{\lra{v}^{s+\ga}Q^{-}(f,f)}_{L_{t}^{1}(0,T;L_{v}^{2}L_{x}^{p})}\leq C(p,T),\label{equ:Q-,f,Lx6}
\end{align}
for all $p\in [2,\frac{6}{3-2s}]$ and $T\in(0,\infty)$. Moreover,
\begin{enumerate}
\item The solution $f(t)$ is unique in the class of $C([0,T];L_{v}^{2,s+\ga}L_{x}^{2})$ solutions satisfying the integrability bounds \eqref{equ:f,Lx6}--\eqref{equ:Q-,f,Lx6} on $[0,T]$ with $p=6$.
\item  The solution $f(t)$ scatters in $L_{v}^{2,s+\ga}L_{x}^{2}\bigcap L_{v}^{2,s+\ga}L_{x}^{\frac{6}{3-2s}}$. That is, there exists a function $f_{+\infty}\in L_{v}^{2,s+\ga}L_{x}^{2}\bigcap L_{v}^{2,s+\ga}L_{x}^{\frac{6}{3-2s}}$ such that
    \begin{align*}
    \lim_{t\to+\infty}\n{f(t)-S(t)f_{+\infty}}_{L_{v}^{2,s+\ga}L_{x}^{2}\bigcap L_{v}^{2,s+\ga}L_{x}^{\frac{6}{3-2s}}}=0.
    \end{align*}
\end{enumerate}
\end{proposition}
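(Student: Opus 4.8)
The strategy is the Kaniel--Shinbrot iteration, feeding in the gain-term-only solution $f^{+}$ from Proposition \ref{lemma:gwp,gain-only,small data} as the ``beginning condition'' so that all bounds are controlled from the outset by the scaling-critical smallness of $f_0$. First I would set up the iteration: define a decreasing sequence $\{l_n\}$ and an increasing sequence $\{u_n\}$ of mild solutions to the linear problems
\begin{align*}
(\partial_t + v\cdot\nabla_x) l_{n+1} + l_{n+1} A[u_n] &= Q^{+}(l_n,l_n), \qquad l_{n+1}(0)=f_0,\\
(\partial_t + v\cdot\nabla_x) u_{n+1} + u_{n+1} A[l_n] &= Q^{+}(u_n,u_n), \qquad u_{n+1}(0)=f_0,
\end{align*}
started from $l_0=0$ and $u_0=f^{+}$ (the gain-only solution). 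The standard Kaniel--Shinbrot lemma gives that $0=l_0\le l_1\le\cdots\le u_1\le u_0=f^{+}$ pointwise provided the ``beginning condition'' $0\le f_0\le f^{+}(0)$ holds, which is trivial here, and provided one checks the monotonicity step using non-negativity of $Q^{+}$ and of $A[\cdot]$; the exponential integrating factor $\exp(-\int_0^t A[\cdot]\,d\tau)$ along characteristics keeps everything non-negative. Hence $l_n\uparrow f$ and $u_n\downarrow g$ pointwise with $0\le f\le g\le f^{+}$, and $f$, $g$ are both mild solutions of the full Boltzmann equation \eqref{equ:Boltzmann} sandwiched by $f^{+}$.

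The next step is to upgrade pointwise convergence to the function-space bounds \eqref{equ:f,Lx6}--\eqref{equ:Q-,f,Lx6}. Since $0\le f\le f^{+}$ and $0\le g\le f^{+}$ pointwise in $(t,x,v)$, and all the target norms in \eqref{equ:f,Lx6}, \eqref{equ:A,f,Lxinfty}, \eqref{equ:Q-,f,Lx6} are lattice (monotone) norms in $|f|$, the bounds transfer directly from the corresponding bounds \eqref{equ:f+,Lx6}, \eqref{equ:A,f+,Lxinfty}, \eqref{equ:Q-,f+,Lx6} for $f^{+}$; for the gain term $Q^{+}(f,f)$ one uses monotonicity of $Q^{+}$ together with \eqref{equ:Q+,f+,Lx6}. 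This also gives $f,g\in L_t^\infty(0,\infty;L_v^{2,s+\ga}L_x^2)$. Then I would invoke the uniqueness theorem (forward-referenced as part of the statement, proved in the uniqueness section) in the class $C([0,T];L_v^{2,s+\ga}L_x^2)$ satisfying \eqref{equ:f,Lx6}--\eqref{equ:Q-,f,Lx6} with $p=6$: since both $f$ and $g$ lie in this class on every $[0,T]$, they coincide, so $f=g$ is \emph{the} solution, and it is non-negative and lies in $C([0,\infty);L_v^{2,s+\ga}L_x^2)$ (continuity in time coming from the Duhamel formula and the $L_t^1$ bounds on $Q^{\pm}(f,f)$). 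Assertion (1) is then exactly this uniqueness statement.

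For scattering, assertion (2), I would write the Duhamel formula $S(-t)f(t) = f_0 + \int_0^t S(-\tau)\big(Q^{+}(f,f)-Q^{-}(f,f)\big)(\tau)\,d\tau$ and show $S(-t)f(t)$ is Cauchy as $t\to\infty$ in $L_v^{2,s+\ga}L_x^2\cap L_v^{2,s+\ga}L_x^{6/(3-2s)}$. The tail $\int_{t_1}^{t_2}S(-\tau)Q^{+}(f,f)\,d\tau$ is controlled in $L_v^{2,s+\ga}L_x^2$ by $\n{\lra v^{s+\ga}Q^{+}(f,f)}_{L_t^1(t_1,\infty;L_v^2L_x^2)}\to 0$, using the global-in-time bound \eqref{equ:Q+,f,Lx6} with $p=2$ (and the $L_x^{6/(3-2s)}$ component by Sobolev embedding $H_x^s\hookrightarrow L_x^{6/(3-2s)}$ after noticing \eqref{equ:Q+,f+,regularity bound} survives the sandwich, or more simply by interpolating the available $L_x^p$ bounds and using that the $L_t^1$ tail vanishes). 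The loss term contributes $\int_{t_1}^{t_2}S(-\tau)\big(f A[f]\big)(\tau)\,d\tau$, estimated by Hölder as before via $\n{\lra v^{s+\ga}f}_{L_t^\infty L_v^2 L_x^p}\n{A[f]}_{L_t^2(t_1,\infty;L_x^\infty L_v^\infty)}|t_2-t_1|^{1/2}$ --- but here I must be careful that the crude factor $|t_2-t_1|^{1/2}$ is not summable; instead one uses $\n{A[f]}_{L_t^2(t_1,\infty;L_x^\infty L_v^\infty)}\in L_t^2$ globally by \eqref{equ:A,f,Lxinfty}, so the loss-term tail $\int_{t_1}^\infty \n{f A[f]}\,d\tau \le \n{\lra v^{s+\ga}f}_{L_t^\infty L_v^2L_x^p}\n{A[f]}_{L_t^2(t_1,\infty)}\cdot\n{1}_{L_t^2(t_1,t_2)}$ still carries a growing factor, so the right move is to bound $\int_{t_1}^\infty \n{\lra v^{s+\ga} fA[f]}_{L_v^2L_x^p}d\tau$ directly as an $L_t^1$ quantity: by Hölder in $t$ with exponents $(\infty, 1)$ is wrong, so use $(2,2)$, giving $\n{\lra v^{s+\ga}f}_{L_t^2(t_1,\infty;\,\cdot)}\n{A[f]}_{L_t^2(t_1,\infty;\,\cdot)}$; the second factor $\to 0$, and the first is finite because $f$ decays (it is bounded by $f^{+}$ which scatters, hence $\n{\lra v^{s+\ga}f^{+}}_{L_t^2 L_v^2 L_x^p}<\infty$ from Strichartz). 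Defining $f_{+\infty}:=f_0+\int_0^\infty S(-\tau)Q(f,f)(\tau)\,d\tau$ then gives the claim. The main obstacle is exactly this last point --- extracting genuine time-decay / $L_t^1$-in-tail control of the loss term $fA[f]$ for scattering, since the loss term carries no dispersive smoothing of its own and one must route its integrability through the Strichartz/$L_t^2$ bounds inherited from $f^{+}$; the monotone sandwich $0\le f\le f^{+}$ is what makes this inheritance legitimate.
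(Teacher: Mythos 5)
Your overall architecture is the same as the paper's (Kaniel--Shinbrot iteration started from $h_{1}=0$, $g_{1}=f^{+}$, bounds inherited from $f^{+}$ by monotonicity of the norms, identification of the limit via the uniqueness theory, Duhamel formula for scattering), but two steps are genuinely flawed. First, the monotone limits $h=\lim l_n$ and $g=\lim u_n$ are \emph{not} individually mild solutions of the Boltzmann equation: they solve the coupled system $\partial_t g+v\cdot\nabla_x g+gA[h]=Q^{+}(g,g)$, $\partial_t h+v\cdot\nabla_x h+hA[g]=Q^{+}(h,h)$, and each becomes a Boltzmann solution only \emph{after} one knows $h=g$. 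So invoking the uniqueness theorem for the Boltzmann equation to conclude $h=g$ is circular. The correct step (and what the paper does) is to write the equation for the difference $w=g-h\geq 0$, namely $\partial_t w+v\cdot\nabla_x w=Q^{+}(g,w)+Q^{+}(w,h)+hA[w]-wA[h]$ with $w(0)=0$, and run the uniqueness-type Gronwall argument in the integrable class (the $L_{t}^{1}L_{v}^{2,s+\ga}L_{x}^{2}$ bilinear estimate together with the $A[\cdot]$ estimate), using the bounds \eqref{equ:f,Lx6}--\eqref{equ:Q-,f,Lx6} inherited from $f^{+}$, to get $w\equiv 0$; the paper's uniqueness lemma is stated so that its proof applies verbatim to this limiting system. (A smaller imprecision: the Kaniel--Shinbrot beginning condition is the ordering $0\leq h_{1}\leq h_{2}\leq g_{2}\leq g_{1}$, which one checks by computing $h_{2}$, $g_{2}$ and using that $g_{2}=g_{1}$ because $f^{+}$ solves the gain-only equation exactly; it is not just $0\leq f_{0}\leq f^{+}(0)$.)

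Second, your treatment of the loss term in the scattering step rests on the claim $\n{\lra{v}^{s+\ga}f^{+}}_{L_{t}^{2}(0,\infty;L_{v}^{2}L_{x}^{p})}<\infty$ ``from Strichartz''. This is false: for each fixed $v$ the free flow $S(t)$ acts as a translation in $x$, so the $L_{v}^{2,s+\ga}L_{x}^{p}$ norm is invariant under $S(t)$; since the solution scatters to a (generically nonzero) free wave, $\n{\lra{v}^{s+\ga}f(t)}_{L_{v}^{2}L_{x}^{p}}$ tends to a positive constant and cannot be square-integrable in time on $(0,\infty)$. The Strichartz bounds available in the paper are global $L_{t}^{2}$ only in $L^{3}$-type norms in $(x,\xi)$ on the Fourier side and do not yield such a bound. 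The paper circumvents the loss term entirely by non-negativity: since $S(-t)f(t)=f_{0}+\int_{0}^{t}S(-\tau)Q^{+}(f,f)\,d\tau-\int_{0}^{t}S(-\tau)Q^{-}(f,f)\,d\tau\geq 0$ and all integrands are non-negative, one has the pointwise domination $\int_{0}^{t}S(-\tau)Q^{-}(f,f)\,d\tau\leq f_{0}+\int_{0}^{t}S(-\tau)Q^{+}(f,f)\,d\tau$, which transfers the $L_{\tau}^{1}$ bound \eqref{equ:Q+,f,Lx6} from the gain term to the loss term; existence of the limit defining $f_{+\infty}$ then follows from the monotone convergence theorem. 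You should replace your H\"older-in-time argument by this monotonicity argument; with these two repairs your proof matches the paper's.
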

\begin{proof}

The Kaniel--Shinbrot iteration in \cite{kaniel78the} is as follows:
\begin{equation}\label{equ:k-s iteration}
\left\{
\begin{aligned}
& \partial_t g_{n+1}+v \cdot \nabla_x g_{n+1}+g_{n+1} A\lrc{h_n}=Q^{+}\left(g_n, g_n\right), \\
& \partial_t h_{n+1}+v \cdot \nabla_x h_{n+1}+h_{n+1} A\lrc{g_n}=Q^{+}\left(h_n, h_n\right), \\
& g_{n+1}(0)=h_{n+1}(0)=f_0.
\end{aligned}
\right.
\end{equation}
With the beginning condition that
\begin{align}\label{equ:beginning condition}
0 \leq h_1 \leq h_2 \leq g_2 \leq g_1 ,
\end{align}
this iteration will generate a monotone sequence
$$
0 \leq h_1 \leq h_n \leq h_{n+1} \leq g_{n+1} \leq g_n \leq g_1 .
$$
To verify \eqref{equ:beginning condition}, we choose
$$h_{1}=0,\quad g_{1}=f^{+},$$
where $f^{+}$, constructed in Proposition \ref{lemma:gwp,gain-only,small data}, is the unique non-negative solution to the gain-term-only Boltzmann equation with initial data $f_{0}$.
Then by \eqref{equ:k-s iteration}, we compute $h_{2}$ and $g_{2}$ to get
\begin{align}
h_{2}(t)=&\lrc{S(t)f_{0}}\exp\lrc{-\int_{0}^{t}S(t-\tau)A[g_{1}](\tau)d\tau},\\
g_{2}(t)=&S(t)f_{0}+\int_{0}^{t}S(t-\tau)Q^{+}(g_{1},g_{1})(\tau)\tau=g_{1}(t).&
\end{align}
By the non-negativity of $g_{1}$, the monotonicity condition \eqref{equ:beginning condition} is satisfied for all $t\geq 0$.
Since all the sequences $g_{n}$, $h_{n}$ are bounded by $g_{1}=f^{+}$, the dominated convergence theorem implies their pointwise limits
\begin{align}
g=\lim_{n\to\infty}g_{n},\quad h=\lim_{n\to \infty}h_{n},
\end{align}
which satisfy
$$
\begin{aligned}
& \partial_t g+v \cdot \nabla_x g+g A\lrc{h}=Q^{+}(g, g), \\
& \partial_t h+v \cdot \nabla_x h+h A\lrc{g}=Q^{+}(h, h), \\
& g(0)=h(0)=f_0,
\end{aligned}
$$
in the sense of distributions. Noting that $0 \leq h\leq g\leq f^{+}$,
by the integrability bounds \eqref{equ:f+,Lx6}--\eqref{equ:Q-,f+,Lx6} on $f^{+}$ in Proposition \ref{lemma:gwp,gain-only,small data}, for $p\in [2,\frac{6}{3-2s}]$ we have
\begin{align}
&\n{\lra{v}^{s+\ga}h}_{L_{t}^{\infty}L_{v}^{2}L_{x}^{p}}\leq
\n{\lra{v}^{s+\ga}g}_{L_{t}^{\infty}L_{v}^{2}L_{x}^{p}}<\infty,\label{equ:h,g,Lx6}\\
&\n{A\lrc{h}}_{L_{t}^{2}(0,\infty;L_{x}^{\infty}L_{v}^{\infty})}\leq
\n{A\lrc{g}}_{L_{t}^{2}(0,\infty;L_{x}^{\infty}L_{v}^{\infty})}<\infty,\label{equ:A,h,g,Lxinfty}\\
&\n{\lra{v}^{s+\ga}Q^{+}(h,h)}_{L_{t}^{1}L_{v}^{2}L_{x}^{p}}\leq
\n{\lra{v}^{s+\ga}Q^{+}(g,g)}_{L_{t}^{1}L_{v}^{2}L_{x}^{p}}<\infty,\label{equ:Q+,h,g,Lx6} \\
&\n{\lra{v}^{s+\ga}Q^{-}(h,h)}_{L_{t}^{1}(0,T;L_{v}^{2}L_{x}^{p})}\leq
\n{\lra{v}^{s+\ga}Q^{-}(g,g)}_{L_{t}^{1}(0,T;L_{v}^{2}L_{x}^{p})}\leq C_{T}. \label{equ:Q-,h,g,Lx6}
\end{align}

We are left to prove that $h=g$. Let $w=g-h\geq 0$. We write out the equation for the difference
\begin{equation}\label{equ:limiting equation,ks iteration}
\left\{
\begin{aligned}
&\pa_{t}w+v\cdot \nabla_{x}w=Q^{+}(g,w)+Q^{+}(w,h)+hA\lrc{w}-wA\lrc{h},\\
&w(0)=0.
\end{aligned}
\right.
\end{equation}
With these integrability bounds \eqref{equ:h,g,Lx6}--\eqref{equ:Q-,h,g,Lx6}, we infer that $w(t)\equiv 0$ by using the uniqueness Lemma \ref{lemma:conditional uniqueness,boltzmann}, the proof of which is postponed to Section \ref{section:uniqueness} for smoothness of presentation.

Thus, with $h=g$, we conclude the global existence of the non-negative solution
for the Boltzmann equation. The uniqueness of the solution also follows from Lemma \ref{lemma:conditional uniqueness,boltzmann} in Section \ref{section:uniqueness}.

For the scattering result, by the Duhamel formula that
\begin{align}
f(t)=S(t)\lrc{f_{0}+\int_{0}^{t}S(-\tau)Q^{+}(f,f)(\tau)d\tau-\int_{0}^{t}S(-\tau)Q^{-}(f,f)(\tau)d\tau},
\end{align}
it is enough to prove that the limit
\begin{align*}
\lim_{t\to+\infty}\int_{0}^{t}S(-\tau)Q^{\pm}(f,f)(\tau)d\tau
\end{align*}
exists in $L_{v}^{2,s+\ga}L_{x}^{2}\bigcap L_{v}^{2,s+\ga}L_{x}^{\frac{6}{3-2s}}$. Due to the non-negativity of $f$, we have
\begin{align}
\int_{0}^{t}S(-\tau)Q^{-}(f(\tau),f(\tau))d\tau\leq
f_{0}+\int_{0}^{t}S(-\tau)Q^{+}(f(\tau),f(\tau))d\tau,
\end{align}
which, together with the integrability bound \eqref{equ:Q+,f,Lx6} on the gain term, implies that
\begin{align*}
&\n{S(-\tau)Q^{-}(f,f)(\tau)}_{L_{v}^{2,s+\ga}L_{x}^{p}L_{\tau}^{1}}\\
\leq&
\n{f_{0}}_{L_{v}^{2,s+\ga}L_{x}^{p}}+\n{S(-\tau)Q^{+}(f,f)(\tau)}_{L_{v}^{2,s+\ga}L_{x}^{p}L_{\tau}^{1}}\\
\leq&\n{f_{0}}_{L_{v}^{2,s+\ga}L_{x}^{p}}+\n{S(-\tau)Q^{+}(f,f)(\tau)}_{L_{\tau}^{1}L_{v}^{2,s+\ga}L_{x}^{p}}<\infty.
\end{align*}
Then by the monotone convergence theorem, we conclude that
\begin{align*}
f_{+\infty}=:f_{0}+\lim_{t\to+\infty}\int_{0}^{t}S(-\tau)Q(f,f)(\tau)d\tau
\end{align*}
exists in $L_{v}^{2,s+\ga}L_{x}^{2}\bigcap L_{v}^{2,s+\ga}L_{x}^{\frac{6}{3-2s}}$, and thus
\begin{align*}
\lim_{t\to+\infty}\n{S(-t)f(t)-f_{+\infty}}_{L_{v}^{2,s+\ga}L_{x}^{2}\bigcap L_{v}^{2,s+\ga}L_{x}^{\frac{6}{3-2s}}}=0.
\end{align*}
\end{proof}

\section{Uniqueness of the Boltzmann Equation}\label{section:uniqueness}

In the section, we present the proof of the uniqueness of the Boltzmann equation in an integrable class, which also works the same for the limiting equation \eqref{equ:limiting equation,ks iteration} from the Kaniel--Shinbrot iteration. In the following Lemma \ref{lemma:Q+,bilinear estimate,uniqueness,L2},
we provide an $L_{t}^{1}L_{v}^{2,s+\ga}L_{x}^{2}$ bilinear estimate, which carries no regularity and is the key to the uniqueness in the integrable class.

\begin{lemma}\label{lemma:Q+,bilinear estimate,uniqueness,L2}
Let $s>1$ and $\ga\in[-1,0]$. We have
\begin{align}
\n{\lra{v}^{s+\ga}Q^{+}(S(t)f_{0},S(t)g_{0})}_{L_{t}^{1}(0,T;L_{x,v}^{2})}
\lesssim& |T|^{\frac{1}{2}}\n{\lra{v}^{s+\ga}f_{0}}_{L_{x,v}^{2}}\n{\lra{v}^{s+\ga}g_{0}}_{L_{v}^{2}L_{x}^{6}},\label{equ:Q+,bilinear estimate,uniqueness,L2,g2d}\\
\n{\lra{v}^{s+\ga}Q^{+}(S(t)f_{0},S(t)g_{0})}_{L_{t}^{1}(0,T;L_{x,v}^{2})}
\lesssim& |T|^{\frac{1}{2}}\n{\lra{v}^{s+\ga}f_{0}}_{L_{v}^{2}L_{x}^{6}}\n{\lra{v}^{s+\ga}g_{0}}_{L_{x,v}^{2}}.\label{equ:Q+,bilinear estimate,uniqueness,L2,f2d}
\end{align}
\end{lemma}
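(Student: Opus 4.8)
Since the time interval is finite, Hölder's inequality in $t$ gives $\n{\cdot}_{L^{1}_{t}(0,T)}\le|T|^{\frac12}\n{\cdot}_{L^{2}_{t}(0,T)}\le|T|^{\frac12}\n{\cdot}_{L^{2}_{t}(0,\infty)}$, so it suffices to prove the $|T|$-free spacetime bound
\begin{align*}
\n{\lra{v}^{s+\ga}Q^{+}(S(t)f_{0},S(t)g_{0})}_{L_{t}^{2}(0,\infty;L_{x,v}^{2})}\lesssim \n{\lra{v}^{s+\ga}f_{0}}_{L_{x,v}^{2}}\n{\lra{v}^{s+\ga}g_{0}}_{L_{v}^{2}L_{x}^{6}},
\end{align*}
and by symmetry only \eqref{equ:Q+,bilinear estimate,uniqueness,L2,g2d} need be treated. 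The first step is to distribute the velocity weight: the energy identity $|v|^{2}+|u|^{2}=|v^{*}|^{2}+|u^{*}|^{2}$ forces $|v|\le|v^{*}|+|u^{*}|$, hence $\lra{v}^{s+\ga}\lesssim\lra{v^{*}}^{s+\ga}+\lra{u^{*}}^{s+\ga}$ because $s+\ga\ge0$; since $B\ge0$, this yields the pointwise bound
\begin{align*}
\lra{v}^{s+\ga}\babs{Q^{+}(f,g)}\lesssim Q^{+}(\lra{\cdot}^{s+\ga}|f|,|g|)+Q^{+}(|f|,\lra{\cdot}^{s+\ga}|g|).
\end{align*}
As $S(t)$ commutes with multiplication by $\lra{v}^{s+\ga}$ and $\lra{v}^{s+\ga}\ge1$, this reduces matters to a single weight-free bilinear estimate for $Q^{+}(S(t)\varphi_{0},S(t)\psi_{0})$ in which the full $s+\ga$ velocity moments of both data remain available to be spent later.

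Next I would pass to the Fourier side in $v$ through the Bobylev identity, so that $Q^{+}$ becomes $\wt Q^{+}$ and $S(t)$ becomes the symmetric Schr\"odinger flow $U(t)=e^{it\nabla_{\xi}\cdot\nabla_{x}}$. The operator $\wt Q^{+}$ is pointwise in $x$, so for each fixed $(t,x)$ Lemma \ref{lemma:Q+,bilinear estimate}, in the form \eqref{equ:Q+,bilinear estimate,L3,PM,f,g}, bounds $\n{\wt Q^{+}(\wt f,\wt g)(t,x,\cdot)}_{L_{\xi}^{2}}$ by $\n{\wt f(t,x,\cdot)}_{L_{\xi}^{3}}\n{\wt g(t,x,\cdot)}_{L_{\xi}^{6/(1-2\ga)}}$; the $\lra{\nabla_{\xi}}^{s+\ga}$ coming from the output weight is redistributed onto the two inputs using the frequency constraint $M\lesssim\max(M_{1},M_{2})$, the $\xi$-analogue \eqref{equ:property,constraint,projector,v,variable} of the Littlewood--Paley support property, after a dyadic decomposition in $\xi$. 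Taking $L_{x}^{2}$ and splitting by H\"older with exponents $3$ and $6$, the $f$-datum is placed in $L_{t}^{2}L_{x}^{3}L_{\xi}^{3}$, which is exactly the Strichartz norm $\n{U(t)\wt f_{0}}_{L_{t}^{2}L_{x,\xi}^{3}}\lesssim\n{\wt f_{0}}_{L_{x,\xi}^{2}}$ from Proposition \ref{lemma:strichartz estimate,U-V} for the admissible pair $(q,p)=(2,3)$; the $g$-datum, which is transported rigidly in $x$ by $S(t)$ so that $\n{S(t)g_{0}}_{L_{t}^{\infty}L_{v}^{2}L_{x}^{6}}=\n{g_{0}}_{L_{v}^{2}L_{x}^{6}}$, is placed in $L_{x}^{6}$, and the discrepancy between $L_{\xi}^{6/(1-2\ga)}$ and the $L_{\xi}^{2}$ of the data is absorbed by the Sobolev embedding $W_{\xi}^{s+\ga,2}\hookrightarrow L_{\xi}^{6/(1-2\ga)}$, which holds with strict room precisely because $s>1$.

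The principal difficulty is that no $x$-regularity is available on either datum: we sit a full half-derivative below the scaling-critical threshold $s_{c}=\tfrac12$ that governs the bilinear estimate of Lemma \ref{lemma:bilinear estimate,gain term,gwp}, and indeed the corresponding fixed-time estimate is false. All spatial integrability must therefore be manufactured from the dispersion in time, which is what dictates the rigid pairing of $L_{t}^{2}L_{x}^{3}$ (for the $L_{x,v}^{2}$ datum) against $L_{t}^{\infty}L_{x}^{6}$ (for the $L_{v}^{2}L_{x}^{6}$ datum); reconciling this pairing with the twisted-convolution structure of $\wt Q^{+}$ and with the placement of the velocity moments --- keeping, in particular, the $v$-integration outside the $L_{x}^{6}$-norm on the $g$-side, where the $x$-outer ordering would be non-uniform in $t$ --- is where the bookkeeping must be done carefully. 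The degenerate cases $\ga=0$ and $\ga=-1$, in which some of the exponents above collapse to $\infty$, are handled by the same endpoint Hardy--Littlewood--Sobolev interpolation already used in the proof of Lemma \ref{lemma:A,f,Lx,estimate}.
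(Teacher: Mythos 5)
Your overall architecture (Bobylev identity, the convolution bounds of Lemma \ref{lemma:Q+,bilinear estimate}, Strichartz at the pair $(q,p)=(2,3)$ on the $L_{x,v}^{2}$ datum, transport invariance plus Minkowski for the $L_{v}^{2}L_{x}^{6}$ datum, and H\"older in time for the factor $|T|^{1/2}$) is the same as the paper's, and your preliminary reduction to a global-in-time $L_{t}^{2}$ bound is a harmless rearrangement of where the paper applies H\"older in $t$. But there is a genuine gap: you only ever describe the favorable half of the weight bookkeeping. Whether you move the weight on the physical side via $|v|\le|v^{*}|+|u^{*}|$ (your first paragraph) or transfer $\lra{\nabla_{\xi}}^{s+\ga}$ through the dyadic constraint $M\lesssim\max(M_{1},M_{2})$ (your second paragraph --- note these two mechanisms are redundant with each other as written), there is inevitably a contribution in which the full weight lands on the $g$-factor, i.e.\ the factor that is only controlled in $L_{v}^{2,s+\ga}L_{x}^{6}$ and therefore cannot be fed into Strichartz. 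For that contribution the single exponent allocation you specify --- $f$-slot in $L_{\xi}^{3}$, $g$-slot in $L_{\xi}^{6/(1-2\ga)}$ with the Sobolev embedding $W_{\xi}^{s+\ga,2}\hookrightarrow L_{\xi}^{6/(1-2\ga)}$ spent on the $g$-side --- does not close: the $g$-factor would then have to pay both the transferred weight $\lra{v}^{s+\ga}$ and an additional $\approx(1+\ga)$ moments for the embedding (or $1/2$ if you instead put it in $L_{\xi}^{3}$), i.e.\ you would need $\lra{v}^{s+2\ga+1}g_{0}\in L_{v}^{2}L_{x}^{6}$, which the hypothesis does not provide. Equivalently, in the Littlewood--Paley scheme this is the case $M_{2}\gtrsim M_{1}$, and the Bernstein factor $M^{s+\ga}M_{2}^{1+\ga-(s+\ga)}$ produced by your allocation does not sum.

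The missing idea is that in this case one must switch to a different admissible pair from Lemma \ref{lemma:Q+,bilinear estimate}: place the weighted high-frequency $g$-factor in plain $L_{\xi}^{2}$ (so that Plancherel, Minkowski $L_{x}^{6}L_{v}^{2}\le L_{v}^{2}L_{x}^{6}$ and transport invariance give exactly $\n{\lra{v}^{s+\ga}g_{0}}_{L_{v}^{2}L_{x}^{6}}$, with no extra moments), e.g.\ via \eqref{equ:Q+,bilinear estimate,PM,g,f}, and let the $f$-factor absorb the Sobolev cost in $L_{\xi}^{3/(-\ga)}$, which it can afford since it carries $s+\ga\ge 1+\ga$ moments and goes through the $L_{t}^{2}L_{x,\xi}^{3}$ Strichartz norm (the endpoint $\ga=0$ uses the room $s>1$ here, not any endpoint Hardy--Littlewood--Sobolev interpolation, which is not needed in this lemma). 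This asymmetric case distinction is precisely the content of Case B in the paper's proof and is the delicate point of the estimate \eqref{equ:Q+,bilinear estimate,uniqueness,L2,g2d}; without it your argument only proves the lemma under a stronger moment assumption on $g_{0}$.
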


\begin{proof}
By Plancherel identity, it suffices to prove that
\begin{align}
\n{\lra{\nabla_{\xi}}^{s+\ga}\wt{Q}^{+}(U(t)\wt{f}_{0},U(t)\wt{g}_{0})}_{L_{t}^{1}(0,T;L_{x,\xi}^{2})} \lesssim&|T|^{\frac{1}{2}}\n{\lra{v}^{s+\ga}f_{0}}_{L_{x,v}^{2}}\n{\lra{v}^{s+\ga}g_{0}}_{L_{v}^{2}L_{x}^{6}},\label{equ:Q+,bilinear estimate,uniqueness,xi,g2d}\\
\n{\lra{\nabla_{\xi}}^{s+\ga}\wt{Q}^{+}(U(t)\wt{f}_{0},U(t)\wt{g}_{0})}_{L_{t}^{1}(0,T;L_{x,\xi}^{2})}
\lesssim& |T|^{\frac{1}{2}}\n{\lra{v}^{s+\ga}f_{0}}_{L_{v}^{2}L_{x}^{6}}\n{\lra{v}^{s+\ga}g_{0}}_{L_{x,v}^{2}}.\label{equ:Q+,bilinear estimate,uniqueness,xi,f2d}
\end{align}
We only handle \eqref{equ:Q+,bilinear estimate,uniqueness,xi,g2d}, as \eqref{equ:Q+,bilinear estimate,uniqueness,xi,f2d} follows similarly.
By duality,
\eqref{equ:Q+,bilinear estimate,uniqueness,xi,g2d} is equivalent to
\begin{align}\label{equ:Q+,bilinear estimate,uniqueness,xi,duality}
\int_{0}^{T}\int \wt{Q}^{+}(U(t)\wt{f}_{0},U(t)\wt{g}_{0}) h dx d\xi dt
\lesssim&|T|^{\frac{1}{2}}\n{\lra{v}^{s+\ga}f_{0}}_{L_{x,v}^{2}}\n{\lra{v}^{s+\ga}g_{0}}_{L_{v}^{2}L_{x}^{6}}.
\end{align}

We denote by $I$ the integral in \eqref{equ:Q+,bilinear estimate,uniqueness,xi,duality}.
Inserting a Littlewood-Paley decomposition gives that
\begin{align*}
I=\sum_{M,M_{1},M_{2}} I_{M,M_{1},M_{2}}
\end{align*}
where
\begin{align*}
I_{M,M_{1},M_{2}}= \int \wt{Q}^{+}(P_{M_{1}}^{\xi}\wt{f},P_{M_{2}}^{\xi}\wt{g}) P_{M}^{\xi}h dx d\xi dt
\end{align*}
with $\wt{f}(t)=U(t)\wt{f}_{0}$ and $\wt{g}(t)=U(t)\wt{g}_{0}$. In the same way as the frequency analysis of \eqref{equ:property,constraint,projector,v,variable}, it gives a constraint that $M\lesssim \max\lrs{M_{1},M_{2}}$.

We divide the sum into two cases, that is, Case A: $M_{1}\geq M_{2}$ and Case B: $M_{1}\leq M_{2}$. We only deal with Case A, as Case B can be treated in a similar way.

Let $I_{A}$ denote the integral restricted to the Case A.
\begin{align*}
I_{A}=&\sum_{M_{1}\geq M_{2},M_{1}\gtrsim M}\int \wt{Q}^{+}(P_{M_{1}}^{\xi}\wt{f},P_{M_{2}}^{\xi}\wt{g}) P_{M}^{\xi}h dx d\xi dt\\
=&\sum_{M_{1}\gtrsim M}\int_{0}^{T}\int \wt{Q}^{+}(P_{M_{1}}^{\xi}\wt{f},P_{\leq M_{1}}^{\xi}\wt{g}) P_{M}^{\xi}h dx d\xi dt.
\end{align*}
where in the last equality we have done the sum in $M_{2}$. By using H\"{o}lder inequality and estimate \eqref{equ:Q+,bilinear estimate,L3,PM,f,g} in Lemma \ref{equ:Q+,bilinear estimate}, we have
\begin{align*}
I_{A}\leq& \int \sum_{M_{1}\gtrsim M} \n{\wt{Q}^{+}(P_{M_{1}}^{\xi}\wt{f},P_{\leq M_{1}}^{\xi}\wt{g})}_{L_{\xi}^{2}} \n{P_{M}^{\xi}h}_{L_{\xi}^{2}} dx  dt\\
\lesssim& \int \sum_{M_{1}\gtrsim M} \n{P_{M_{1}}^{\xi}\wt{f}}_{L_{\xi}^{3}}\n{P_{\leq M_{1}}^{\xi}\wt{g}}_{L_{\xi}^{\frac{6}{1-2\ga}}} \n{P_{M}^{\xi}h}_{L_{\xi}^{2}} dx  dt.
\end{align*}
By Bernstein inequality and Sobolev inequality that $W^{1+\ga,2} \hookrightarrow L^{\frac{6}{1-2\ga}}$,
\begin{align*}
I_{A}\lesssim& \int \sum_{M_{1}\gtrsim M} \frac{M^{s+\ga}}{M_{1}^{s+\ga}}
\n{\lra{\nabla_{\xi}}^{s+\ga}P_{M_{1}}^{\xi}\wt{f}}_{L_{\xi}^{3}}\n{\lra{\nabla_{\xi}}^{s+\ga}\wt{g}}_{L_{\xi}^{2}} \n{\lra{\nabla_{\xi}}^{-s-\ga}P_{M}^{\xi}h}_{L_{\xi}^{2}} dx  dt.
\end{align*}
With $s+\ga>0$, we use Cauchy-Schwarz in $M$ and $M_{1}$ to get
\begin{align*}
I_{A}\leq& \int \n{\lra{\nabla_{\xi}}^{s+\ga}\wt{g}}_{L_{\xi}^{2}}
\lrs{\sum_{M_{1}\gtrsim M}\frac{M^{s+\ga}}{M_{1}^{s+\ga}}
\n{\lra{\nabla_{\xi}}^{s+\ga}P_{M_{1}}^{\xi}\wt{f}}_{L_{\xi}^{3}}^{2}}^{\frac{1}{2}}\\
&\lrs{\sum_{M_{1}\gtrsim M}\frac{M^{s+\ga}}{M_{1}^{s+\ga}}
\n{\lra{\nabla_{\xi}}^{-s-\ga}P_{M}^{\xi}h}_{L_{\xi}^{2}}^{2}}^{\frac{1}{2}}dxdt\\
\lesssim &\int \n{\lra{\nabla_{\xi}}^{s+\ga}\wt{g}}_{L_{\xi}^{2}}
\n{\lra{\nabla_{\xi}}^{s+\ga}P_{M_{1}}^{\xi}\wt{f}}_{l_{M_{1}}^{2}L_{\xi}^{3}}
\n{\lra{\nabla_{\xi}}^{-s-\ga}h}_{L_{\xi}^{2}} dxdt.
\end{align*}
By H\"{o}lder inequality in the $x$-variable and the $t$-variable, we use Minkowski inequality to get
\begin{align*}
I_{A}\lesssim &
\n{\lra{\nabla_{\xi}}^{s+\ga}\wt{g}}_{L_{t}^{2}(0,T;L_{x}^{6}L_{\xi}^{2})}
\n{\lra{\nabla_{\xi}}^{s+\ga}P_{M_{1}}^{\xi}\wt{f}}_{L_{t}^{2}L_{x}^{3}l_{M_{1}}^{2}L_{\xi}^{3}}
\n{\lra{\nabla_{\xi}}^{-s-\ga}h}_{L_{t}^{\infty}(0,T;L_{x}^{2}L_{\xi}^{2})}\\
\lesssim&\n{\lra{\nabla_{\xi}}^{s+\ga}\wt{g}}_{L_{t}^{2}(0,T;L_{x}^{6}L_{\xi}^{2})}
\n{\lra{\nabla_{\xi}}^{s+\ga}P_{M_{1}}^{\xi}\wt{f}}_{l_{M_{1}}^{2}L_{t}^{2}L_{x}^{3}L_{\xi}^{3}}
\n{\lra{\nabla_{\xi}}^{-s-\ga}h}_{L_{t}^{\infty}(0,T;L_{x}^{2}L_{\xi}^{2})}.
\end{align*}
Inserting in $\wt{f}(t)=U(t)\wt{f}_{0}$ and $\wt{g}(t)=U(t)\wt{g}_{0}$, we use Strichartz estimate \eqref{equ:strichartz estimate,linear} and to obtain
\begin{align*}
I_{A}\lesssim&\n{\lra{\nabla_{\xi}}^{s+\ga}U(t)\wt{g}_{0}}_{L_{t}^{2}(0,T;L_{x}^{6}L_{\xi}^{2})}
\n{\lra{\nabla_{\xi}}^{s+\ga}P_{M_{1}}^{\xi}\wt{f}_{0}}_{l_{M_{1}}^{2}L_{x}^{2}L_{\xi}^{2}}
\n{\lra{\nabla_{\xi}}^{-s-\ga}h}_{L_{t}^{\infty}(0,T;L_{x}^{2}L_{\xi}^{2})}\\
\lesssim&\n{\lra{v}^{s+\ga}S(t)g_{0}}_{L_{t}^{2}(0,T;L_{x}^{6}L_{v}^{2})}
\n{\lra{v}^{s+\ga}f_{0}}_{L_{x}^{2}L_{v}^{2}}
\n{\lra{\nabla_{\xi}}^{-s-\ga}h}_{L_{t}^{\infty}(0,T;L_{x}^{2}L_{\xi}^{2})}\\
\lesssim&|T|^{\frac{1}{2}}\n{\lra{v}^{s+\ga}g_{0}}_{L_{v}^{2}L_{x}^{6}}
\n{\lra{v}^{s+\ga}f_{0}}_{L_{x}^{2}L_{v}^{2}}
\n{\lra{\nabla_{\xi}}^{-s-\ga}h}_{L_{t}^{\infty}(0,T;L_{x}^{2}L_{\xi}^{2})},
\end{align*}
where in the last inequality we have used that
\begin{align*}
\n{\lra{v}^{s+\ga}S(t)g_{0}}_{L_{t}^{2}(0,T;L_{x}^{6}L_{v}^{2})}\leq |T|^{\frac{1}{2}}\n{\lra{v}^{s+\ga}S(t)g_{0}}_{L_{t}^{\infty}(0,T;L_{v}^{2}L_{x}^{6})}\leq |T|^{\frac{1}{2}}\n{\lra{v}^{s+\ga}g_{0}}_{L_{v}^{2}L_{x}^{6}}.
\end{align*}
Therefore, we have completed the proof of \eqref{equ:Q+,bilinear estimate,uniqueness,xi,duality}.
\end{proof}

With the $L_{t}^{1}L_{v}^{2,s+\ga}L_{x}^{2}$ bilinear estimate, we are able to prove the uniqueness theorem.
\begin{lemma}[Uniqueness]\label{lemma:conditional uniqueness,boltzmann}
Let $s>1$ and $\ga\in[-1,0]$. There is at most one solution $f(t)\in C([0,T];L_{v}^{2,s+\ga}L_{x}^{2})$ to the Boltzmann equation \eqref{equ:Boltzmann} on $[0,T]$ satisfying
\begin{align}
&\n{\lra{v}^{s+\ga}f}_{L_{t}^{\infty}(0,T;L_{v}^{2}L_{x}^{6})}\leq C_{T},\label{equ:f,Lx6,uniqueness}\\
&\n{A\lrc{f}}_{L_{t}^{2}(0,T;L_{x}^{\infty}L_{v}^{\infty})}\leq C_{T},\label{equ:A,f,Lxinfty,uniqueness}\\
&\n{\lra{v}^{s+\ga}Q^{\pm}(f,f)}_{L_{t}^{1}(0,T;L_{v}^{2}L_{x}^{6})}\leq C_{T}.\label{equ:Q+,f,Lx6,uniquness}
\end{align}
\end{lemma}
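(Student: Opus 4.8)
The plan is to run a short‑time contraction argument on the difference of two solutions, closed in the scaling‑subcritical, regularity‑free norm $L_t^1L_v^{2,s+\ga}L_x^2$, and then to iterate it across $[0,T]$. Suppose $f_1,f_2\in C([0,T];L_v^{2,s+\ga}L_x^2)$ both solve \eqref{equ:Boltzmann} and satisfy \eqref{equ:f,Lx6,uniqueness}--\eqref{equ:Q+,f,Lx6,uniquness}, and set $w=f_1-f_2$. By bilinearity of $Q^+$ and linearity of $A[\cdot]$, $w$ solves
\[
\pa_t w+v\cdot\nabla_x w=\cn[w]:=Q^+(w,f_1)+Q^+(f_2,w)-w\,A[f_1]-f_2\,A[w],\qquad w(0)=0,
\]
so $w(t)=\int_0^t S(t-\tau)\cn[w](\tau)\,d\tau$. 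A preliminary point is that, using \eqref{equ:f,Lx6,uniqueness}--\eqref{equ:Q+,f,Lx6,uniquness} together with Lemmas \ref{lemma:Q+,bilinear estimate,uniqueness,L2} and \ref{lemma:A,f,Lx,estimate} and a routine truncation, one checks that $Q^\pm(f_i,f_i)$, and hence $\cn[w]$, belong to $L_t^1(0,T;L_v^{2,s+\ga}L_x^2)$, so that the above Duhamel identity holds in $C([0,T];L_v^{2,s+\ga}L_x^2)$.

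Fix $[a,b]\subset[0,T]$ with $w(a)=0$ and set $\cn_{[a,b]}:=\n{\lra{v}^{s+\ga}\cn[w]}_{L_t^1(a,b;L_{x,v}^2)}$. Since $S(t)=e^{-tv\cdot\nabla_x}$ is an isometry on $L_{x,v}^2$ commuting with multiplication by $\lra{v}^{s+\ga}$, the Duhamel formula gives $\sup_{t\in[a,b]}\n{\lra{v}^{s+\ga}w(t)}_{L_{x,v}^2}\le\cn_{[a,b]}$. For the two loss terms I would use H\"older and Cauchy--Schwarz in time: by \eqref{equ:A,f,Lxinfty,uniqueness},
\[
\n{\lra{v}^{s+\ga}w\,A[f_1]}_{L_t^1(a,b;L_{x,v}^2)}\le|b-a|^{1/2}\n{A[f_1]}_{L_t^2(a,b;L_x^\infty L_v^\infty)}\sup_{[a,b]}\n{\lra{v}^{s+\ga}w}_{L_{x,v}^2}\lesssim C_T|b-a|^{1/2}\cn_{[a,b]},
\]
and, placing the weight on $f_2$ and applying H\"older in $x$ ($\tfrac12=\tfrac16+\tfrac13$) and in $v$,
\[
\n{\lra{v}^{s+\ga}f_2\,A[w]}_{L_t^1(a,b;L_{x,v}^2)}\le|b-a|^{1/2}\n{\lra{v}^{s+\ga}f_2}_{L_t^\infty L_v^2L_x^6}\n{A[w]}_{L_t^2(a,b;L_x^3L_v^\infty)};
\]
as $w$ obeys the Duhamel equation with $w(a)=0$, Lemma \ref{lemma:A,f,Lx,estimate} applied to $w$ bounds $\n{A[w]}_{L_t^2(a,b;L_x^3L_v^\infty)}\lesssim\cn_{[a,b]}$, so this term is again $\lesssim C_T|b-a|^{1/2}\cn_{[a,b]}$ by \eqref{equ:f,Lx6,uniqueness}.

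For the two gain terms I would invoke the bilinear estimate of Lemma \ref{lemma:Q+,bilinear estimate,uniqueness,L2}. That lemma is stated for free evolutions $S(t)f_0$, so a small adjustment is needed: its proof in fact yields, just before the concluding Strichartz step, the variant
\[
\n{\lra{v}^{s+\ga}Q^+(\phi,\psi)}_{L_t^1(a,b;L_{x,v}^2)}\lesssim|b-a|^{1/2}\n{\lra{\nabla_\xi}^{s+\ga}\wt\phi}_{L_t^2(a,b;L_{x,\xi}^3)}\n{\lra{v}^{s+\ga}\psi}_{L_t^\infty(a,b;L_v^2L_x^6)},
\]
and symmetrically, in which the first argument enters only through the endpoint Strichartz norm $\n{\lra{\nabla_\xi}^{s+\ga}\wt\phi}_{L_t^2L_{x,\xi}^3}$ (note $(q,p)=(2,3)$ satisfies $\tfrac2q+\tfrac6p=3$). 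Taking $\phi=w$ and using $\wt w=\int_a^t U(t-\tau)\widetilde{\cn[w]}(\tau)\,d\tau$ with $\wt w(a)=0$, together with the commutation of $\lra{\nabla_\xi}^{s+\ga}$ with $U(t)$, the Duhamel estimate \eqref{equ:strichartz estimate,duhamel,U} and the embedding \eqref{equ:strichartz estimate,U} of Proposition \ref{lemma:strichartz estimate,U-V} give $\n{\lra{\nabla_\xi}^{s+\ga}\wt w}_{L_t^2(a,b;L_{x,\xi}^3)}\lesssim\cn_{[a,b]}$; combined with \eqref{equ:f,Lx6,uniqueness} this bounds the two gain contributions by $\lesssim C_T|b-a|^{1/2}\cn_{[a,b]}$. (Alternatively one may expand $f_1,f_2$ through their own Duhamel formulas and apply Lemma \ref{lemma:Q+,bilinear estimate,uniqueness,L2} verbatim to each free‑times‑free piece, summing the forcing terms by \eqref{equ:Q+,f,Lx6,uniquness}.) Adding the four bounds gives $\cn_{[a,b]}\le C\,C_T|b-a|^{1/2}\cn_{[a,b]}$ with $C,C_T$ independent of $[a,b]\subset[0,T]$; choosing $\de$ with $C\,C_T\de^{1/2}\le\tfrac12$ forces $\cn_{[a,b]}=0$, hence $w\equiv0$, on every subinterval of length $\le\de$ issuing from a zero of $w$. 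Iterating from $t=0$ over finitely many such subintervals yields $w\equiv0$ on $[0,T]$, i.e. $f_1=f_2$. The same argument, with $(f_1,f_2)$ replaced by $(g,h)$ and \eqref{equ:f,Lx6,uniqueness}--\eqref{equ:Q+,f,Lx6,uniquness} by \eqref{equ:h,g,Lx6}--\eqref{equ:Q-,h,g,Lx6}, shows that the solution $w$ of the limiting equation \eqref{equ:limiting equation,ks iteration} vanishes, which is what Section \ref{section:Global Existence of the Boltzmann Equation} requires.

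I expect the main obstacle to be the transfer of the gain‑term bilinear estimate to the genuinely inhomogeneous difference $w$: one must reread the proof of Lemma \ref{lemma:Q+,bilinear estimate,uniqueness,L2} to see that the first argument enters only via the Strichartz norm $\n{\lra{\nabla_\xi}^{s+\ga}\wt\phi}_{L_t^2L_{x,\xi}^3}$, which the $U$--$V$ Duhamel bound then controls by $\cn_{[a,b]}$, and one must verify that every self‑referential occurrence of $w$ --- in $Q^+(w,f_1)$ and in $f_2A[w]$ --- is multiplied by the gain $|b-a|^{1/2}$, so that the short‑interval contraction genuinely closes. Establishing a priori that $\cn_{[a,b]}$ is finite is a minor but non‑vacuous preliminary step.
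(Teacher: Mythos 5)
Your proposal is correct in substance and follows the paper's strategy: Duhamel for the difference $w$, closing a short-time bound on the regularity-free quantity $\n{\lra{v}^{s+\ga}\mathcal{N}[w]}_{L_t^1L_{x,v}^2}$ (the paper's $W(t_0)$), treating the two loss terms exactly as the paper does (H\"older in time with \eqref{equ:A,f,Lxinfty,uniqueness} for $wA[f_1]$, and Lemma \ref{lemma:A,f,Lx,estimate} applied to the Duhamel formula of $w$ for $f_2A[w]$), and finishing by absorption on intervals of length $\de$ plus continuity. The one genuine divergence is your primary treatment of the gain terms: you reopen the proof of Lemma \ref{lemma:Q+,bilinear estimate,uniqueness,L2} to extract an inhomogeneous variant in which the first argument enters only through the norm $\n{\lra{\nabla_{\xi}}^{s+\ga}\wt{\phi}}_{L_t^2L_{x,\xi}^3}$, and then control this norm of $\wt{w}$ by \eqref{equ:strichartz estimate,duhamel,U} and \eqref{equ:strichartz estimate,U}; the paper instead expands \emph{both} arguments ($w$ and $f$) through their Duhamel formulas, applies Lemma \ref{lemma:Q+,bilinear estimate,uniqueness,L2} verbatim to each free-times-free interaction, and sums the forcing terms using \eqref{equ:Q+,f,Lx6,uniquness} --- which is exactly the alternative you mention in parentheses. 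Your route avoids the double Duhamel expansion and the hypothesis-level use of \eqref{equ:Q+,f,Lx6,uniquness} for the gain terms, at the price of having to re-derive an inhomogeneous bilinear estimate.

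One caveat on that primary route: the proof of Lemma \ref{lemma:Q+,bilinear estimate,uniqueness,L2} does not quite yield the stated variant with the plain norm $\n{\lra{\nabla_{\xi}}^{s+\ga}\wt{\phi}}_{L_t^2L_{x,\xi}^3}$; just before the Strichartz step the first argument appears in the Besov-type norm $\n{\lra{\nabla_{\xi}}^{s+\ga}P_{M_1}^{\xi}\wt{\phi}}_{l_{M_1}^2L_t^2L_x^3L_{\xi}^3}$, which is \emph{larger} than the $L_t^2L_{x,\xi}^3$ norm (Minkowski goes the wrong way for $p=3>2$), so your claimed intermediate estimate overstates what the proof gives. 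The repair is routine and stays within your scheme: keep the $l^2$-of-blocks norm, note that each block $P_{M_1}^{\xi}\wt{w}$ satisfies its own Duhamel formula so that \eqref{equ:strichartz estimate,duhamel,U} and \eqref{equ:strichartz estimate,U} apply blockwise, and then sum using the Minkowski inequality for atomic $U$ spaces exactly as in the proof of Lemma \ref{lemma:bilinear estimate,gain term,gwp}, giving $\n{\lra{\nabla_{\xi}}^{s+\ga}P_{M_1}^{\xi}\wt{w}}_{l_{M_1}^2L_t^2L_x^3L_\xi^3}\lesssim \n{\lra{\nabla_{\xi}}^{s+\ga}\widetilde{\mathcal{N}[w]}}_{L_t^1L_{x,\xi}^2}$. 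With that adjustment (or simply by falling back on your stated alternative, which is the paper's argument) the proof closes; your explicit attention to the a priori finiteness of $\n{\mathcal{N}[w]}_{L_t^1L_v^{2,s+\ga}L_x^2}$ is a point the paper passes over silently.
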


\begin{proof}
Let $f$, $g$ be two solutions of the Boltzmann equation on $[0,T]$ satisfying the bounds \eqref{equ:f,Lx6,uniqueness}--\eqref{equ:Q+,f,Lx6,uniquness}. We consider the difference
$w=f-g$,
which satisfies
\begin{align}\label{equ:duhamel formula,w}
w(t)=\int_{0}^{t}S(t-\tau)\mathcal{N}\lrc{w}(\tau)d\tau,
\end{align}
where
$\mathcal{N}\lrc{w}=Q^{+}(w,f)+Q^{+}(g,w)-Q^{-}(w,f)-Q^{-}(g,w)$.
Define the quantity
\begin{align}
W(t_{0})=\n{w}_{L_{t}^{\infty}(0,t_{0};L_{v}^{2,s+\ga}L_{x}^{2})}+\n{\mathcal{N}\lrc{w}}_{L_{t}^{1}(0,t_{0};L_{v}^{2,s+\ga}L_{x}^{2})}.
\end{align}
It suffices to prove that $W(t_{0})=0$ for a sufficiently small $t_{0}$, as the global uniqueness follows from a standard continuity argument.
To do this, we provide a closed estimate for $W(t_{0})$. Noticing that
\begin{align*}
\n{w}_{L_{t}^{\infty}(0,t_{0};L_{v}^{2,s+\ga}L_{x}^{2})}\leq \n{\mathcal{N}\lrc{w}}_{L_{t}^{1}(0,t_{0};L_{v}^{2,s+\ga}L_{x}^{2})},
\end{align*}
we are left to deal with the nonlinear term and divide it into four parts
\begin{align*}
\n{\mathcal{N}\lrc{w}}_{L_{t}^{1}(0,t_{0};L_{v}^{2,s+\ga}L_{x}^{2})}\leq I_{1}+I_{2}+I_{3}+I_{4},
\end{align*}
where
\begin{align*}
I_{1}=\n{Q^{+}(w,f)}_{L_{t}^{1}(0,t_{0};L_{v}^{2,s+\ga}L_{x}^{2})},\\
I_{2}=\n{Q^{+}(g,w)}_{L_{t}^{1}(0,t_{0};L_{v}^{2,s+\ga}L_{x}^{2})},\\
I_{3}=\n{Q^{-}(w,f)}_{L_{t}^{1}(0,t_{0};L_{v}^{2,s+\ga}L_{x}^{2})},\\
I_{4}=\n{Q^{-}(g,w)}_{L_{t}^{1}(0,t_{0};L_{v}^{2,s+\ga}L_{x}^{2})}.\\
\end{align*}

For $I_{1}$, we have
\begin{align*}
I_{1}=&\n{Q^{+}(w,f)}_{L_{t}^{1}(0,t_{0};L_{v}^{2,s+\ga}L_{x}^{2})}\\
\lesssim&\n{Q^{+}(w,S(t)f(0))}_{L_{t}^{1}(0,t_{0};L_{v}^{2,s+\ga}L_{x}^{2})}\\
&+\bbn{Q^{+}\lrs{w,\int_{0}^{t}S(t-\sigma)\mathcal{N}\lrc{f}(\sigma)d\sigma}}_{L_{t}^{1}(0,t_{0};L_{v}^{2,s+\ga}L_{x}^{2})}\\
=:&I_{11}+I_{12}.
\end{align*}

Using Duhamel formula \eqref{equ:duhamel formula,w} of $w(t)$ and Minkowski inequality, we get
\begin{align*}
I_{11}=& \bbn{\int_{0}^{t}Q^{+}\lrs{S(t-\tau)\mathcal{N}\lrc{w}(\tau),S(t)f(0)}d\tau}_{L_{t}^{1}(0,t_{0};L_{v}^{2,s+\ga}L_{x}^{2})}\\
\leq& \int_{0}^{t_{0}}\n{Q^{+}\lrs{S(t-\tau)\mathcal{N}\lrc{w}(\tau),S(t)f(0)}}_{L_{t}^{1}(0,t_{0};L_{v}^{2,s+\ga}L_{x}^{2})}d\tau.
\end{align*}
By the $L_{t}^{1}L_{v}^{2,s+\ga}L_{x}^{2}$ bilinear estimate \eqref{equ:Q+,bilinear estimate,uniqueness,L2,g2d} in Lemma \ref{lemma:Q+,bilinear estimate,uniqueness,L2},
\begin{align*}
I_{11}\lesssim& \int_{0}^{t_{0}} \sqrt{t_{0}}\n{S(-\tau)\mathcal{N}\lrc{w}(\tau)}_{L_{v}^{2,s+\ga}L_{x}^{2}}\n{f(0)}_{L_{v}^{2,s+\ga}L_{x}^{6}}d\tau\\
\lesssim& \sqrt{t_{0}}\n{\mathcal{N}\lrc{w}}_{L_{t}^{1}(0,t_{0};L_{v}^{2,s+\ga}L_{x}^{2})}\n{f(0)}_{L_{v}^{2,s+\ga}L_{x}^{6}}\\
\lesssim& \sqrt{t_{0}}C_{T}W(t_{0}).
\end{align*}

In a similar way, for $I_{12}$ we have
\begin{align*}
I_{12}\leq& \bbn{\int_{0}^{t}\int_{0}^{t}Q^{+}\lrs{S(t-\tau)\mathcal{N}\lrc{w}(\tau),\int_{0}^{t}S(t-\sigma)\mathcal{N}\lrc{f}(\sigma)}d\sigma d\tau}_{L_{t}^{1}(0,t_{0};L_{v}^{2,s+\ga}L_{x}^{2})}\\
\leq& \int_{0}^{t_{0}}\int_{0}^{t_{0}}\n{Q^{+}\lrs{S(t-\tau)\mathcal{N}\lrc{w}(\tau),S(t-\sigma)\mathcal{N}
\lrc{f}(\sigma)}}_{L_{t}^{1}(0,t_{0};L_{v}^{2,s+\ga}L_{x}^{2})}d\tau d\sigma\\
\lesssim& \int_{0}^{t_{0}} \int_{0}^{t_{0}} \sqrt{t_{0}}\n{S(-\tau)\mathcal{N}\lrc{w}(\tau)}_{L_{v}^{2,s+\ga}L_{x}^{2}}\n{S(-\sigma)\mathcal{N}\lrc{f}(\sigma)}_{L_{v}^{2,s+\ga}L_{x}^{6}}d\tau d\sigma\\
\lesssim& \sqrt{t_{0}}\n{\mathcal{N}\lrc{w}}_{L_{t}^{1}(0,t_{0};L_{v}^{2,s+\ga}L_{x}^{2})}
\n{\mathcal{N}\lrc{f}}_{L_{t}^{1}(0,t_{0};L_{v}^{2,s+\ga}L_{x}^{6})}\\
\lesssim& \sqrt{t_{0}}C_{T}W(t_{0}).
\end{align*}
Together with estimates for $I_{11}$ and $I_{12}$, we get
\begin{align}\label{equ:uniqueness,closed estimate,I1}
I_{1}\lesssim& \sqrt{t_{0}}C_{T}W(t_{0}).
\end{align}

Since the term $I_{2}$ can be estimated in the same way as $I_{1}$, we also have
\begin{align}\label{equ:uniqueness,closed estimate,I2}
I_{2}\lesssim& \sqrt{t_{0}}C_{T}W(t_{0}).
\end{align}

Next, we deal with the term $I_{3}$. By H\"{o}lder inequality, we obtain
\begin{align}\label{equ:uniqueness,closed estimate,I3}
I_{3}=&\n{Q^{-}(w,f)}_{L_{t}^{1}(0,t_{0};L_{v}^{2,s+\ga}L_{x}^{2})}\\
\leq&\sqrt{t_{0}}\n{w}_{L_{t}^{\infty}(0,t_{0};L_{v}^{2,s+\ga}L_{x}^{2})}\n{A\lrc{f}}_{L_{t}^{2}L_{x}^{\infty}L_{v}^{\infty}}\notag\\
\lesssim&\sqrt{t_{0}}C_{T}W(t_{0}).\notag
\end{align}

For $I_{4}$, by using H\"{o}lder inequality and then estimate \eqref{equ:A,f,Lx3,estimate} in Lemma \ref{lemma:A,f,Lx,estimate}, we get
\begin{align}\label{equ:uniqueness,closed estimate,I4}
I_{4}=&\n{Q^{-}(g,w)}_{L_{t}^{1}(0,t_{0};L_{v}^{2,s+\ga}L_{x}^{2})}\\
\leq&\sqrt{t_{0}}\n{g}_{L_{t}^{\infty}(0,t_{0};L_{v}^{2,s+\ga}L_{x}^{6})}\n{A\lrc{w}}_{L_{t}^{2}L_{x}^{3}L_{v}^{\infty}}\notag\\
\leq&\sqrt{t_{0}}\n{g}_{L_{t}^{\infty}(0,t_{0};L_{v}^{2,s+\ga}L_{x}^{6})}\lrs{\n{w(0)}_{L_{v}^{2,s+\ga}L_{x}^{2}}+
\n{\mathcal{N}\lrc{w}}_{L_{t}^{1}L_{x}^{2}L_{v}^{2,s+\ga}}}\notag\\
\lesssim&\sqrt{t_{0}}C_{T}W(t_{0}).\notag
\end{align}

Putting estimates \eqref{equ:uniqueness,closed estimate,I1}, \eqref{equ:uniqueness,closed estimate,I2}, \eqref{equ:uniqueness,closed estimate,I3} and
\eqref{equ:uniqueness,closed estimate,I4} together, we arrive at
\begin{align*}
W(t_{0})\lesssim \sqrt{t_{0}}C_{T}W(t_{0}).
\end{align*}
By choosing $t_{0}$ sufficiently small, we conclude that $W(t_{0})=0$. One can
then extend this vanishing to the entire time interval $[0,T]$ by a standard continuity argument.
\end{proof}

\section{Strong Solution and the Blow-up Criterion}\label{section:Strong Solution and Blow-up Criteria}
In the section, we prove the strong local well-posedness of the Boltzmann equation and give the blow-up criterion. We call it strong as the local solution carries the regularity.
\begin{lemma}
Let $T\leq 1$, $s>1$, $\ga\in [-1,0]$. We have
\begin{align}
\n{\lra{\nabla_{x}}^{s}\lra{\nabla_{\xi}}^{s+\ga}\wt{Q}^{-}(\wt{f},\wt{g})}_{L_{t}^{2}(0,T;L_{x\xi}^{2})}\lesssim \n{\wt{f}}_{U_{L}^{2}(0,T;H_{x}^{s}H_{\xi}^{s+\ga})}\n{\wt{g}}_{U_{L}^{2}(0,T;H_{x}^{s}H_{\xi}^{s+\ga})}\label{equ:bilinear estimate,Q-,Lt2},\\
\n{\lra{\nabla_{x}}^{s}\lra{\nabla_{\xi}}^{s+\ga}\wt{Q}^{+}(\wt{f},\wt{g})}_{L_{t}^{2}(0,T;L_{x\xi}^{2})}\lesssim \n{\wt{f}}_{U_{L}^{2}(0,T;H_{x}^{s}H_{\xi}^{s+\ga})}\n{\wt{g}}_{U_{L}^{2}(0,T;H_{x}^{s}H_{\xi}^{s+\ga})}.\label{equ:Q+,bilinear estimate,Lt2}
\end{align}
\end{lemma}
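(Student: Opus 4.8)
The plan is to prove the two bilinear bounds separately, exploiting that the loss term $\wt{Q}^-$ is \emph{local} in the velocity variable (it is the pointwise product $Q^-(f,g)=fA[g]$), whereas the gain term $\wt{Q}^+$ is a genuinely dispersive bilinear form in $\xi$ which must be handled by the frequency analysis of the proof of Lemma~\ref{lemma:bilinear estimate,gain term,gwp}. In both estimates the output is measured in $L_t^2$ rather than $L_t^1$; this is deliberate, since feeding them into the Duhamel term through \eqref{equ:strichartz estimate,duhamel,U} costs only a factor $|T|^{1/2}$ by H\"older in time, and that factor is exactly the smallness needed to run the local contraction.

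For \eqref{equ:bilinear estimate,Q-,Lt2}, by Plancherel I would instead bound $\n{\lra{\nabla_x}^s\lra{v}^{s+\ga}(fA[g])}_{L_t^2(0,T;L_{x,v}^2)}$, where $f,g$ correspond to $\wt{f},\wt{g}$. The weight $\lra{v}^{s+\ga}$ only touches $f$, so $\lra{v}^{s+\ga}(fA[g])=(\lra{v}^{s+\ga}f)A[g]$, and a fractional Leibniz rule in $x$ (Lemma~\ref{lemma:generalized leibniz rule}) leaves the two terms $(\lra{\nabla_x}^s\lra{v}^{s+\ga}f)A[g]$ and $(\lra{v}^{s+\ga}f)\lra{\nabla_x}^sA[g]$. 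In the first I would put $A[g]\in L_t^2L_x^\infty L_v^\infty$ and the $f$-factor in $L_t^\infty L_{x,v}^2$; in the second I would use H\"older with $\tfrac12=\tfrac13+\tfrac16$ in $x$ and $L_v^2\cdot L_v^\infty$ in $v$, putting $\lra{v}^{s+\ga}f\in L_t^\infty L_x^6L_v^2$ (Sobolev $H_x^s\hookrightarrow L_x^6$, valid since $s>1$) and $\lra{\nabla_x}^sA[g]\in L_t^2L_x^3L_v^\infty$. The $f$-factors are controlled by $\n{\wt{f}}_{U_L^2(0,T;H_x^sH_\xi^{s+\ga})}$ because $U^2\hookrightarrow L_t^\infty$. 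For the $A[\cdot]$-factors I would use
\begin{align*}
\n{A[\lra{\nabla_x}^sh]}_{L_t^2L_x^3L_v^\infty}\lesssim\n{\lra{\nabla_x}^s\lra{v}^{s+\ga}\wt{h}}_{U_L^2L_{x,\xi}^2},
\end{align*}
together with $W_x^{s,3}\hookrightarrow L_x^\infty$ ($s>1$) to upgrade $L_x^3$ to $L_x^\infty$ where needed. This $A$-estimate is proved exactly as Lemma~\ref{lemma:A,f,Lx,estimate}: write $\mathcal{F}_{v\mapsto\xi}^{-1}A[h]=c\,\wt{h}(x,\xi)|\xi|^{-3-\ga}$, use $\n{\cdot}_{L_v^\infty}\lesssim\n{\cdot}_{\mathcal{F}L_\xi^1}$, split the borderline weight $|\xi|^{-3-\ga}$ by the endpoint Hardy--Littlewood--Sobolev inequality, apply $W_\xi^{s+\ga,3}\hookrightarrow L_\xi^{3/(-\ga\pm\delta)}$ (available for small $\delta$ precisely because $s>1$), and close with the Strichartz estimate \eqref{equ:strichartz estimate,U} for the pair $(q,p)=(2,3)$; the only new point is that \eqref{equ:strichartz estimate,U} already applies to arbitrary $U_L^2$ functions, not only to free evolutions $S(t)f_0$.

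For \eqref{equ:Q+,bilinear estimate,Lt2} I would follow the template of the proofs of Lemma~\ref{lemma:bilinear estimate,gain term,gwp} and Lemma~\ref{lemma:Q+,bilinear estimate,uniqueness,L2}. By duality, insert Littlewood--Paley decompositions $P_{N_1}^xP_{M_1}^\xi$, $P_{N_2}^xP_{M_2}^\xi$, $P_N^xP_M^\xi$ on $\wt{f},\wt{g},h$; the constraint $N\lesssim\max(N_1,N_2)$ holds because $\wt{Q}^+$ is local in $x$ (\eqref{equ:property,constraint,projector}) and $M\lesssim\max(M_1,M_2)$ holds by the energy-conservation lower bound $|v|\le|v^\ast|+|u^\ast|$ (\eqref{equ:property,constraint,projector,v,variable}). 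Splitting into the four cases according to which of $M_1,M_2$ and $N_1,N_2$ is larger and summing the low frequency, one transfers the output weights $N^sM^{s+\ga}$ onto the higher-frequency input with ratios $(N/N_1)^s(M/M_1)^{s+\ga}$ that are summable since $s>0$ and $s+\ga>0$. Then the convolution estimates \eqref{equ:Q+,bilinear estimate,L3,PM,f,g}--\eqref{equ:Q+,bilinear estimate,L3,PM,g,f} put the high-frequency factor in $L_\xi^3$ and the low one in $L_\xi^{6/(1-2\ga)}$; H\"older in $x$ ($\tfrac12=\tfrac13+\tfrac16$) and in $t$ ($\tfrac12=\tfrac12+\tfrac{1}{\infty}$) distributes the high factor into $L_t^2L_x^3L_\xi^3$, a $(2,3)$ Strichartz norm handled by \eqref{equ:strichartz estimate,U}, and the low factor into $L_t^\infty L_x^6L_\xi^{6/(1-2\ga)}$, controlled by $\n{\cdot}_{U_L^2H_x^sH_\xi^{s+\ga}}$ via $U^2\hookrightarrow L_t^\infty$ and the Sobolev embeddings $H_x^s\hookrightarrow L_x^6$, $H_\xi^{s+\ga}\hookrightarrow L_\xi^{6/(1-2\ga)}$ (both valid for $s>1$). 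Cauchy--Schwarz in the dyadic indices and the Minkowski inequality for the atomic $U^2$ space (\cite[Lemma~4.25]{koch14dispersive}) finish the proof. The difference from Lemma~\ref{lemma:bilinear estimate,gain term,gwp} is merely the bookkeeping: there two $L_t^2$-Strichartz factors produced an $L_t^1$ output, here one $L_t^2$-Strichartz factor and one $L_t^\infty$ factor produce the $L_t^2$ output, and because $s>s_c$ there is room to carry the full regularity $H_x^sH_\xi^{s+\ga}$ on the low-frequency factor.

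The hard part is the loss term. Unlike $\wt{Q}^+$, it has no useful frequency localization beyond the trivial convolution constraint, so the estimate must go through the nonlocal collision-frequency operator $A[g]$ and the borderline weight $|\xi|^{-3-\ga}$, which is genuinely critical both at $\ga=0$ and, in the enlarged range of this lemma, at $\ga=-1$. The way around it is the Strichartz-refined Hardy--Littlewood--Sobolev argument above: measuring in $L_t^2$ costs nothing in the contraction thanks to the spare $|T|^{1/2}$, and combined with the hypothesis $s>1$ it is exactly what makes the intermediate Sobolev exponents close.
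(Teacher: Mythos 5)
Two preliminary remarks. First, the paper itself gives no argument here: it invokes the Fourier-restriction-space proofs of these bounds from the earlier work \cite{chen2023well} and asserts that they transfer to $U_L^2$, so your proposal cannot be compared line by line with a written proof in this paper. Second, your treatment of the loss term is correct and is a clean way to do it: the weight commutes onto $f$, the fractional Leibniz rule in $x$ splits the product, and the two $A[\cdot]$-factors are handled by rerunning the proof of Lemma \ref{lemma:A,f,Lx,estimate} for a general $U_L^2$ function, which is legitimate precisely because the $(q,p)=(2,3)$ Strichartz estimate \eqref{equ:strichartz estimate,U} is already stated for arbitrary $U_L^2$ functions; the $f$-factors are then controlled through the embedding of $U^2$ into $L_t^\infty$ of the underlying Hilbert space, on which $U(-t)$ acts isometrically.

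The gap is in the gain term, in the mixed frequency cases. Your prescription --- the ``high'' factor in the Strichartz slot $L_t^2L_x^3L_\xi^3$ via \eqref{equ:Q+,bilinear estimate,L3,PM,f,g}--\eqref{equ:Q+,bilinear estimate,L3,PM,g,f}, the ``low'' factor in $L_t^\infty L_x^6L_\xi^{6/(1-2\ga)}$ via Sobolev --- is only well defined when one input is high in both $x$ and $\xi$ (the analogues of Cases A and D). In the mixed cases (e.g. $M_1\le M_2$, $N_1\ge N_2$) the $\xi$-high factor is the $x$-low one, and neither assignment closes with your exponents: if that factor is placed in the $L_t^\infty$ slot it must be measured in $L_\xi^3$, which costs an extra $\lra{\nabla_\xi}^{1/2}$ (equivalently a Bernstein factor $M_2^{1/2}$) on top of the $\lra{\nabla_\xi}^{s+\ga}$ already spent transferring the output weight, and that half derivative is simply not available since the input carries exactly $s+\ga$; if instead it is placed in the Strichartz slot, then the $x$-high, $\xi$-low factor lands in $L_t^\infty L_x^6$, and converting $L_x^6$ to the $L^2$-based $U_L^2$ norm at fixed time costs one full $x$-derivative on a factor that must simultaneously absorb $N_1^{s}$ by Bernstein, leaving a non-summable excess $N_1$ in the $(N,N_1)$ Schur sum. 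The repair stays inside your toolkit but requires a different exponent pair from Lemma \ref{lemma:Q+,bilinear estimate} in the mixed cases: use \eqref{equ:Q+,bilinear estimate,PM,f,g}--\eqref{equ:Q+,bilinear estimate,PM,g,f}, so that the $\xi$-high ($x$-low) factor is measured in $L_\xi^2$ and goes into the slot $L_t^\infty L_x^6L_\xi^2$, which after the Bernstein weight transfer is bounded by $\sup_t\n{\lra{\nabla_x}^{s}\lra{\nabla_\xi}^{s+\ga}\cdot}_{L_{x,\xi}^2}$ with no loss, while the $x$-high ($\xi$-low) factor is measured in $L_\xi^{3/(-\ga)}$, absorbed by $W_\xi^{s+\ga,3}\hookrightarrow L_\xi^{3/(-\ga)}$ (this is where $s>1$ enters), and goes into the $(2,3)$ Strichartz slot $L_t^2L_{x,\xi}^3$ carrying the dyadic $\ell^2$ sums, closed by \eqref{equ:strichartz estimate,U} and the Minkowski inequality for $U^2$. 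With that modification your Cases A and D argument and the dyadic bookkeeping go through; as written, however, Cases B and C do not close.
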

\begin{proof}
The estimates \eqref{equ:bilinear estimate,Q-,Lt2}--\eqref{equ:Q+,bilinear estimate,Lt2} have been established in \cite[Section 2]{chen2023well} in the Fourier restriction space. In the same way, we can extend them to the atomic $U_{L}^{2}$ space and we omit the proof for simplicity.
\end{proof}

By the bilinear estimates \eqref{equ:bilinear estimate,Q-,Lt2}--\eqref{equ:Q+,bilinear estimate,Lt2}, we conclude the local well-posedness of the Boltzmann equation.
\begin{theorem}\label{thm:local well-posedness}
Let $s>1$, $\ga\in[-1,0]$. The Boltzmann equation \eqref{equ:Boltzmann} is locally well-posed in $L_{v}^{2,s+\ga}H_{x}^{s}$.

More precisely,
for each $f_{0}\in L_{v}^{2,s+\ga}H_{x}^{s}$, there exists a time $T_{0}>0$ such that there exists a unique $C([0,T_{0}];L_{v}^{2,s+\ga}H_{x}^{s})$ solution $f(t)$ to the Boltzmann equation satisfying
\begin{align}
\n{f}_{U_{L}(0,T_{0};L_{v}^{2,s+\ga}H_{x}^{s})}<\infty.
\end{align}
Moreover, we have:
\begin{enumerate}[$(1)$]
\item The solution $f(t)$ satisfies
\begin{align}\label{equ:A,f,Lxinfty,local}
 \n{Q^{\pm}(f,f)}_{L_{t}^{1}(0,T_{0};L_{v}^{2,s+\ga}H_{x}^{s})}<\infty,\quad \n{A[f]}_{L_{t}^{2}(0,T_{0};L_{x}^{\infty}L_{v}^{\infty})}<\infty.
\end{align}
\item
The solution map $f_{0}\in L_{v}^{2,s+\ga}H_{x}^{s}\mapsto f\in C([0,T_{0}];L_{v}^{2,s+\ga}H_{x}^{s})$ is Lipschitz continuous.
\item
The lifespan $T(f_{0})$ is bounded from below,
\begin{align*}
T(f_{0})\geq C\n{f_{0}}_{L_{v}^{2,s+\ga}H_{x}^{s}}^{-2},
\end{align*}
which gives a blow-up criterion that
\begin{align}
\lim_{t\to T(f_{0})}\n{f(t)}_{L_{v}^{2,s+\ga}H_{x}^{s}}=\infty.
\end{align}
\end{enumerate}
\end{theorem}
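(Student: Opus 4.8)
The plan is to run a standard contraction mapping argument for the Duhamel operator inside the atomic $U_L^2$ space, with the bilinear estimates \eqref{equ:bilinear estimate,Q-,Lt2}--\eqref{equ:Q+,bilinear estimate,Lt2} serving as the analytic engine. Passing to the $(x,\xi)$ side via Plancherel, the Boltzmann equation reads $\pa_t\wt{f}-i\nabla_\xi\cdot\nabla_x\wt{f}=\wt{Q}^+(\wt{f},\wt{f})-\wt{Q}^-(\wt{f},\wt{f})$, and I would study the map
\[
\Phi(\wt{f})(t)=U(t)\wt{f}_0+\int_0^t U(t-\tau)\big(\wt{Q}^+(\wt{f},\wt{f})-\wt{Q}^-(\wt{f},\wt{f})\big)(\tau)\,d\tau
\]
on the ball $B_R=\{\wt{f}:\n{\wt{f}}_{U_L^2(0,T_0;H_x^sH_\xi^{s+\ga})}\le R\}$ with $R=2\n{\wt{f}_0}_{H_x^sH_\xi^{s+\ga}}$. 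The decisive step is the self-mapping bound: commuting the $x$- and $\xi$-derivatives past the flow, applying the inhomogeneous Strichartz estimate \eqref{equ:strichartz estimate,duhamel,U}, then H\"{o}lder in time on $[0,T_0]$ together with \eqref{equ:bilinear estimate,Q-,Lt2}--\eqref{equ:Q+,bilinear estimate,Lt2}, one obtains
\[
\bbn{\int_0^t U(t-\tau)\wt{Q}^{\pm}(\wt{f},\wt{f})\,d\tau}_{U_L^2(0,T_0;H_x^sH_\xi^{s+\ga})}\lesssim \n{\wt{Q}^{\pm}(\wt{f},\wt{f})}_{L_t^1(0,T_0;H_x^sH_\xi^{s+\ga})}\lesssim T_0^{1/2}\n{\wt{f}}_{U_L^2}^2,
\]
so $\n{\Phi(\wt{f})}_{U_L^2}\le\n{\wt{f}_0}_{H_x^sH_\xi^{s+\ga}}+CT_0^{1/2}R^2$. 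Choosing $T_0$ with $CT_0^{1/2}R\le\frac14$, i.e.\ $T_0\simeq\n{f_0}_{L_v^{2,s+\ga}H_x^s}^{-2}$, makes $\Phi$ map $B_R$ into itself.

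For the contraction, bilinearity gives $\Phi(\wt{f})-\Phi(\wt{g})=\int_0^t U(t-\tau)\big[\wt{Q}(\wt{f}-\wt{g},\wt{f})+\wt{Q}(\wt{g},\wt{f}-\wt{g})\big]d\tau$ with $\wt{Q}=\wt{Q}^+-\wt{Q}^-$, and the same chain of estimates yields $\n{\Phi(\wt{f})-\Phi(\wt{g})}_{U_L^2}\lesssim T_0^{1/2}(\n{\wt{f}}_{U_L^2}+\n{\wt{g}}_{U_L^2})\n{\wt{f}-\wt{g}}_{U_L^2}\le\frac12\n{\wt{f}-\wt{g}}_{U_L^2}$ on $B_R$; the Banach fixed point theorem then produces a unique fixed point $\wt{f}\in B_R$. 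Continuity in time is automatic: $t\mapsto U(-t)\wt{f}(t)=\wt{f}_0+\int_0^t U(-\tau)\wt{Q}(\wt{f},\wt{f})\,d\tau$ is continuous with values in $H_x^sH_\xi^{s+\ga}$ since $\wt{Q}(\wt{f},\wt{f})\in L_t^1 H_x^sH_\xi^{s+\ga}$, hence $f\in C([0,T_0];L_v^{2,s+\ga}H_x^s)$; uniqueness in the full class of finite-$U_L$-norm solutions follows from the same difference estimate on a possibly shorter subinterval plus a continuity/bootstrap argument.

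The remaining items are then readily deduced. Property (1): the bilinear estimates applied to the solution and H\"{o}lder in time give $Q^{\pm}(f,f)\in L_t^1(0,T_0;L_v^{2,s+\ga}H_x^s)$, and Lemma \ref{lemma:A,f,Lx,estimate} with $\mathcal{N}[f]=Q^+(f,f)-Q^-(f,f)$ yields the $A[f]\in L_t^2 L_x^\infty L_v^\infty$ bound. Property (2): for two data $f_0,g_0$ in a fixed bounded set one can choose $T_0$ uniformly, and the difference estimate gives $\n{f-g}_{C([0,T_0];L_v^{2,s+\ga}H_x^s)}\lesssim\n{f_0-g_0}_{L_v^{2,s+\ga}H_x^s}$, i.e.\ Lipschitz (indeed analytic, as the solution map is a contraction built from a bilinear nonlinearity). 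Property (3): applying the local theory with initial datum taken at the maximal time of existence gives $T(f_0)\gtrsim\n{f_0}_{L_v^{2,s+\ga}H_x^s}^{-2}$; if $T(f_0)<\infty$ while $\n{f(t)}_{L_v^{2,s+\ga}H_x^s}$ stayed bounded along a sequence $t_n\nearrow T(f_0)$, restarting the solution at $t_n$ would extend it past $T(f_0)$, a contradiction, which is exactly the blow-up criterion.

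The main obstacle I anticipate is the interface between the $L_t^2$-type bilinear estimates \eqref{equ:bilinear estimate,Q-,Lt2}--\eqref{equ:Q+,bilinear estimate,Lt2} and the $U_L^2$ framework: one must verify that, after commuting all derivatives past the flow, the inhomogeneous Strichartz/transfer estimate \eqref{equ:strichartz estimate,duhamel,U} legitimately accepts an $L_t^1 H_x^sH_\xi^{s+\ga}$ datum, and that the H\"{o}lder-in-time step — which is precisely where the decisive $T_0^{1/2}$ factor (hence the quadratic lower bound on the lifespan) originates — is compatible with the atomic structure on $[0,T_0]$. The bilinear estimates themselves, the genuine heart of the matter, are quoted from \cite{chen2023well} and lifted to $U_L^2$ in the preceding lemma.
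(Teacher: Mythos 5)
Your proposal is correct and follows essentially the same route as the paper, which omits the details and simply invokes the standard contraction mapping principle in the $U_L^2$ framework (citing \cite[Section 2.3]{chen2023well}), with the bilinear estimates \eqref{equ:bilinear estimate,Q-,Lt2}--\eqref{equ:Q+,bilinear estimate,Lt2} plus the transfer estimate \eqref{equ:strichartz estimate,duhamel,U} and H\"older in time driving the fixed point, and with \eqref{equ:A,f,Lxinfty,local} obtained exactly as you do, from those bilinear estimates together with Lemma \ref{lemma:A,f,Lx,estimate}. The interface issues you flag are unproblematic, since the bilinear estimates output Lebesgue-in-time norms and \eqref{equ:strichartz estimate,duhamel,U} is precisely the $L_t^1\to U_L^2$ transfer, so your argument matches the intended proof.
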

\begin{proof}
We omit the proof, as it follows from the standard contraction mapping principle. See for example \cite[Section 2.3]{chen2023well}. The bounds \eqref{equ:A,f,Lxinfty,local} follow from the bilinear estimates \eqref{equ:bilinear estimate,Q-,Lt2}--\eqref{equ:Q+,bilinear estimate,Lt2} and
estimate \eqref{equ:A,f,Lxinfty,estimate} in Lemma \ref{lemma:A,f,Lx,estimate}.
\end{proof}

\section{Persistence of Regularity}\label{section:Persistence of Regularity}
In the current section, we provide the regularity criteria for the Boltzmann equation and then
 recover the regularity of the global solution $f(t)$ constructed in Proposition \ref{lemma:global existence,full} by using the uniqueness lemma and the strong local well-posedness established in Sections \ref{section:uniqueness}--\ref{section:Strong Solution and Blow-up Criteria}.

For simplicity, we define the integrability bound
\begin{align}\label{equ:the integrability bound}
M_{r}(0,T)=:&\n{\lra{v}^{r}f}_{L_{t}^{\infty}(0,T;L_{v}^{2}L_{x}^{6})}+\n{A\lrc{f}}_{L_{t}^{2}(0,T;L_{x}^{\infty}L_{v}^{\infty})}
+\n{\lra{v}^{r}Q^{\pm}(f,f)}_{L_{t}^{1}(0,T;L_{v}^{2}L_{x}^{6})},
\end{align}
the regularity bound
\begin{align}\label{equ:the regularity bound}
E_{s,r}(t_{0},t_{0}+t):=\n{\lra{\nabla_{x}}^{s}\lra{v}^{r}f}_{L_{t}^{\infty}(t_{0},t_{0}+t;L_{x,v}^{2})}
+\n{\lra{\nabla_{x}}^{s}\lra{v}^{r}Q^{\pm}(f,f)}_{L_{t}^{1}(t_{0},t_{0}+t;L_{x,v}^{2})},
\end{align}
and the maximal time for the regularity bound
\begin{align}\label{equ:definition,T0}
T_{s,r}^{*}=\sup_{T}\lr{T\geq 0:E_{s,r}(0,T)<\infty}.
\end{align}

We first give the regularity criteria in the following Lemma \ref{lemma:regularity criteria}, and then use it to prove the persistence of regularity and the property of finite mass density in Proposition \ref{lemma:persistence of regularity}. Subsequently, we complete the proof of Lemma \ref{lemma:regularity criteria} with the help of an $L_{t}^{1}L_{v}^{2,r}H_{x}^{s}$ bilinear estimate in Lemma \ref{lemma:bilinear estimate,gain term,regularity bound}, the proof of which we postpone to the end of this section.

\begin{lemma}[Regularity Criteria]\label{lemma:regularity criteria}
Let $s>1$, $r>1+\ga>0$, $\be\geq 0$.
\begin{enumerate}[$(1)$]
\item \label{criteria,1} If $T_{s,r}^{*}>0$ and $M_{r}(0,T)<\infty$ for some $T\in[0,\infty)$, then $T_{s,r}^{*}>T$.
\item \label{criteria,2} If $T_{s,r+\be}^{*}>0$ and $E_{s,r}(0,T)<\infty$ for some $T\in[0,\infty)$, then $T_{s,r+\be}^{*}>T$.
\end{enumerate}
\end{lemma}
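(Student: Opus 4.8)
The plan is to prove both parts by a single continuation (bootstrap) argument; the only difference between them is which a priori finite quantity plays the role of the ``cheap'' factor: for part (1) it is the integrability bound $M_{r}(0,T)$, and for part (2) it is the lower-weight regularity bound $E_{s,r}(0,T)$ (which in particular dominates $M_{r}(0,T)$, via the Sobolev embedding $H_{x}^{s}\hookrightarrow L_{x}^{6}$ and Lemma \ref{lemma:A,f,Lx,estimate}). Write $\rho=r$ in part (1) and $\rho=r+\be$ in part (2), so that in both cases the goal is $T_{s,\rho}^{*}>T$. Throughout, $f$ obeys the Duhamel formula $f(t)=S(t-t_{0})f(t_{0})+\int_{t_{0}}^{t}S(t-\tau)\mathcal{N}[f](\tau)\,d\tau$ with $\mathcal{N}[f]=Q^{+}(f,f)-Q^{-}(f,f)$, from any base time $t_{0}\ge 0$. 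Since $\rho\ge r>1+\ga\ge\tfrac12>0$, energy conservation $|v|^{2}+|u|^{2}=|v^{*}|^{2}+|u^{*}|^{2}$ gives the pointwise bound $\lra{v}^{\rho}\lesssim\lra{v^{*}}^{\rho}+\lra{u^{*}}^{\rho}$ on the collision manifold, which is what lets the weight $\lra{v}^{\rho}$ be distributed onto the two factors of $Q^{+}$ (the same lower bound being the source of the frequency constraint \eqref{equ:property,constraint,projector,v,variable}).

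The heart of the argument is a closed short-time estimate. Assume $f(t_{0})\in L_{v}^{2,\rho}H_{x}^{s}$. Working on the Fourier side in $v$ (via Plancherel, where the free flow is $U(t)$ and the space $U_{L}^{2}$ is defined), apply $\lra{\nabla_{x}}^{s}\lra{v}^{\rho}$ to the Duhamel formula, take the $U_{L}^{2}$-norm over $(t_{0},t_{0}+\de)$, and use that a free evolution is a single $U^{2}$-atom, the inhomogeneous Strichartz estimate \eqref{equ:strichartz estimate,duhamel,U}, and the embedding $U_{L}^{2}\hookrightarrow L_{t}^{\infty}L_{x,v}^{2}$. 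With
\[
X(\de):=\n{\lra{\nabla_{x}}^{s}\lra{v}^{\rho}f}_{U_{L}^{2}(t_{0},t_{0}+\de;L_{x,v}^{2})},
\]
this bounds both $X(\de)$ and $E_{s,\rho}(t_{0},t_{0}+\de)$ by $\n{\lra{\nabla_{x}}^{s}\lra{v}^{\rho}f(t_{0})}_{L_{x,v}^{2}}+\n{\lra{\nabla_{x}}^{s}\lra{v}^{\rho}\mathcal{N}[f]}_{L_{t}^{1}(t_{0},t_{0}+\de;L_{x,v}^{2})}$, and it remains to control the nonlinear term. For the gain part I would invoke the new $L_{t}^{1}L_{v}^{2,\rho}H_{x}^{s}$ bilinear estimate, Lemma \ref{lemma:bilinear estimate,gain term,regularity bound}, which — through the Littlewood--Paley analysis of Section \ref{section:Scaling-invariant Bilinear Estimate for Gain Term}, in particular the frequency-localization constraints $N\lesssim\max(N_{1},N_{2})$ and $M\lesssim\max(M_{1},M_{2})$ of \eqref{equ:property,constraint,projector}--\eqref{equ:property,constraint,projector,v,variable} — places all the $x$-derivatives and the weight $\lra{v}^{\rho}$ on one factor (measured by $X(\de)$) and leaves the other factor in a scaling-subcritical norm with finite time-exponent, whose restriction to $(t_{0},t_{0}+\de)$ costs only a factor $\de^{1/2}$ times the cheap control quantity on $[0,T]$. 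For the loss part $Q^{-}(f,f)=f\,A[f]$ I would apply the fractional Leibniz rule, distributing the $s$ derivatives and the weight onto one factor: $(\lra{\nabla_{x}}^{s}\lra{v}^{\rho}f)A[f]$ is bounded by $\de^{1/2}\n{\lra{\nabla_{x}}^{s}\lra{v}^{\rho}f}_{L_{t}^{\infty}L_{x,v}^{2}}\n{A[f]}_{L_{t}^{2}L_{x}^{\infty}L_{v}^{\infty}}$, while $(\lra{v}^{\rho}f)A[\lra{\nabla_{x}}^{s}f]$ is bounded by H\"{o}lder ($f$ in $L_{x}^{6}$, $A[\lra{\nabla_{x}}^{s}f]$ in $L_{x}^{3}$) together with a $\de^{1/2}$ factor in time and Lemma \ref{lemma:A,f,Lx,estimate} applied to $\lra{\nabla_{x}}^{s}f$ (which solves $(\partial_{t}+v\cdot\nabla_{x})(\lra{\nabla_{x}}^{s}f)=\lra{\nabla_{x}}^{s}\mathcal{N}[f]$, as $\lra{\nabla_{x}}^{s}$ commutes with the transport), giving $\n{A[\lra{\nabla_{x}}^{s}f]}_{L_{t}^{2}L_{x}^{3}L_{v}^{\infty}}\lesssim \n{\lra{\nabla_{x}}^{s}\lra{v}^{\rho}f(t_{0})}_{L_{x,v}^{2}}+\n{\lra{\nabla_{x}}^{s}\lra{v}^{\rho}\mathcal{N}[f]}_{L_{t}^{1}L_{x,v}^{2}}$ (using $\rho\ge s+\ga$, which holds in the intended applications); the velocity weight on $A[\,\cdot\,]$ is handled by the Hardy--Littlewood--Sobolev estimate already used in Lemma \ref{lemma:A,f,Lx,estimate}.

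Collecting these bounds gives, with a constant $C$ and a modulus $\om(\de)=C'\de^{1/2}\big(1+\text{control quantity on }[0,T]\big)\to 0$ as $\de\to 0$ (manifestly uniform in $t_{0}\in[0,T]$, since the control quantity on $[0,T]$ does not depend on $t_{0}$),
\[
X(\de)\le C\n{\lra{\nabla_{x}}^{s}\lra{v}^{\rho}f(t_{0})}_{L_{x,v}^{2}}+C\,\om(\de)\,X(\de).
\]
Fixing $\de=\de(T)$ with $C\om(\de)\le\tfrac12$ yields $X(\de)<\infty$, hence $E_{s,\rho}(t_{0},t_{0}+\de)<\infty$ and $f(t_{0}+\de)\in L_{v}^{2,\rho}H_{x}^{s}$. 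Since $T_{s,\rho}^{*}>0$, choose a base time $t_{1}<T_{s,\rho}^{*}$ with $f(t_{1})\in L_{v}^{2,\rho}H_{x}^{s}$; iterating the closed estimate with this fixed step covers $[t_{1},T]$ in finitely many steps (each subinterval lying in $[0,T]$, where the control quantity is finite), and adding $E_{s,\rho}(0,t_{1})<\infty$ gives $E_{s,\rho}(0,T)<\infty$. The strict inequality $T_{s,\rho}^{*}>T$, rather than merely $\ge T$, then follows from one further short step past $T$, which is legitimate because in the situation of the theorem the control quantity is finite on all of $[0,\infty)$ ($M_{r}$ by \eqref{equ:f,Lx6}--\eqref{equ:Q-,f,Lx6} for the solution of Proposition \ref{lemma:global existence,full}, and $E_{s,r}$ then by part (1)).

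The main obstacle is the bilinear estimate Lemma \ref{lemma:bilinear estimate,gain term,regularity bound} itself: one must show that $Q^{+}$ carries $s>\tfrac12$ spatial derivatives together with the weight $\lra{v}^{\rho}$ into $L_{t}^{1}L_{x,v}^{2}$ while the remaining factor stays in a scaling-subcritical norm, which requires rerunning the delicate dyadic summation of Section \ref{section:Scaling-invariant Bilinear Estimate for Gain Term} with the extra derivatives and weight tracked against the $|\eta|^{-3-\ga}$ Hardy--Littlewood--Sobolev kernel and against the frequency constraints (the velocity one again being a consequence of energy conservation). A secondary technical point in the argument above is checking that the modulus $\om(\de)$ is genuinely uniform over base times $t_{0}\in[0,T]$ and that the velocity weight is threaded correctly through both $Q^{+}$ and the operator $A[\,\cdot\,]$.
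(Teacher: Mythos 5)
There is a genuine gap in your continuation step. From the inequality $X(\de)\le C\n{\lra{\nabla_{x}}^{s}\lra{v}^{\rho}f(t_{0})}_{L_{x,v}^{2}}+C\om(\de)X(\de)$ with $C\om(\de)\le\tfrac12$ you conclude ``$X(\de)<\infty$'', but an absorption of this kind only yields $X(\de)\le 2C\n{\cdot}$ \emph{if $X(\de)$ is already known to be finite}; a priori, on an interval reaching past $T_{s,\rho}^{*}$ the quantity $X(\de)$ (equivalently $E_{s,\rho}(t_{0},t_{0}+\de)$) is exactly what is in question, so the step is circular and your iteration cannot legitimately cross $T_{s,\rho}^{*}$. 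This is precisely why the paper proves the closed estimate \eqref{equ:regularity bound,energy inequality} only for $t_{0}+t_{1}<T_{s,r}^{*}$ (where finiteness holds by the definition \eqref{equ:definition,T0}, so absorption is legal), and then argues by contradiction: assuming $T_{s,r}^{*}\le T$, the bound \eqref{equ:bounded,blowup point} keeps $\n{f(t)}_{L_{v}^{2,s+\ga}H_{x}^{s}}$ bounded as $t\nearrow T_{s,r}^{*}$, and the strong local well-posedness with its lifespan lower bound/blow-up criterion (Theorem \ref{thm:local well-posedness}), combined with the uniqueness Lemma \ref{lemma:conditional uniqueness,boltzmann} to identify the strong local solution with $f$, extends the regularity past $T_{s,r}^{*}$, contradicting the definition of $T_{s,r}^{*}$. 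Your proposal never invokes Theorem \ref{thm:local well-posedness} or the uniqueness lemma, so the mechanism that actually produces finiteness beyond $T_{s,\rho}^{*}$ is missing; relatedly, your strict inequality $T^{*}>T$ is obtained by importing finiteness of the control quantity on all of $[0,\infty)$ from Proposition \ref{lemma:global existence,full}, which is not among the hypotheses of the lemma.

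A second, smaller issue is the gain term. Lemma \ref{lemma:bilinear estimate,gain term,regularity bound} is stated only for pairs of free evolutions $S(t)f_{0},S(t)g_{0}$, so it does not apply directly to $Q^{+}(f,f)$ with one factor ``measured by $X(\de)$''. The paper bridges this by substituting the Duhamel formula \eqref{equ:duhamel,t0} into \emph{both} arguments of $Q^{+}$, producing the four terms $I_{1},\dots,I_{4}$, and then using Minkowski in the $\tau$-integrals so that the free-evolution bilinear estimate applies to each piece, with the cheap factor controlled by $M_{r}$ (or by $E_{s,r}$ in part (2)). Your alternative — transferring the estimate to a $U_{L}^{2}$ factor via the atomic structure — is plausible (the $L_{t}^{2}$ Strichartz factor transfers, and the $\de^{1/2}$ gain can be distributed over the atom's subintervals by Cauchy--Schwarz), but it requires rerunning the dyadic proof in a form not stated in the paper, and as written you simply invoke the lemma outside its hypotheses. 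The treatment of the loss term (fractional Leibniz, H\"older, and Lemma \ref{lemma:A,f,Lx,estimate} applied to $A[\lra{\nabla_{x}}^{s}f]$) matches the paper's \eqref{equ:regularity bound,loss term,step1}--\eqref{equ:regularity bound,loss term,step2} and is fine.
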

\begin{remark}\label{remark:conjecture}
Lemma \ref{lemma:regularity criteria} actually holds for $d\geq 2$ provided that $s>1$, $r>1+\ga$, and  $L_{x}^{6}$ in \eqref{equ:the integrability bound} are replaced by $s>\frac{d-1}{2}$, $r>\frac{d-1}{2}+\ga$, and $L_{x}^{2d}$ respectively. We only deal with the $d=3$ case here and other cases follow similarly.
 One novel application of the 2D version of Lemma \ref{lemma:regularity criteria} is to solve the \cite[Conjecture 1.1]{chen2021small}.

As shown in \cite[Theorem 1.2]{chen2021small}, the 2D local solution $f^{(2D)}(t)$ exists up to time $T_{0}(s)$ which depends on the lower regularity norm of initial data, i.e. $\n{\lra{v}^{s}\lra{\nabla_{x}}^{s}f^{(2D)}_{0}}_{L_{x,v}^{2}}$ for $s\in (0,\frac{1}{2})$. It was conjectured that the solution
carries the $\frac{1}{2}+$ regularity of the initial data up to time $T_{0}(s)$, that is,
\begin{align}\label{equ:2d blow up}
\n{\lra{v}^{\frac{1}{2}+}\lra{\nabla_{x}}^{\frac{1}{2}+}f^{(2D)}(t)}_{L_{t}^{\infty}(0,T;L_{x,v}^{2})}<\infty,
\quad \text{for any $T\in(0,T_{0}(s))$.}
\end{align}
 Certainly, it is expected that the regularity is propagated for a short time depending on  $\n{\lra{v}^{\frac{1}{2}+}\lra{\nabla_{x}}^{\frac{1}{2}+}f_{0}^{(2D)}}_{L_{x,v}^{2}}$. The point of the conjecture is that the $\frac{1}{2}+$ regularity persists for a time depending solely on a lower regularity norm of initial data.

The local solution $f^{(2D)}(t)$ is constructed by the Kaniel--Shinbrot iteration and thus satisfies the 2D integrability bounds.
Using the 2D version of regularity criterion \eqref{criteria,1} in Lemma \ref{lemma:regularity criteria}, we deduce \eqref{equ:2d blow up} and hence solve the conjecture.
\end{remark}

Now, we get back to our 3D global regularity problem.
\begin{proposition}\label{lemma:persistence of regularity}
Let $f(t)$ be the global solution to the Boltzmann equation constructed in Proposition \ref{lemma:global existence,full}. Then we have
\begin{align}
&f(t)\in C([0,\infty);L_{v}^{2,s+\ga}H_{x}^{s}),\\
&\n{\lra{\nabla_{x}}^{s}\lra{v}^{s+\ga}Q^{\pm}(f,f)}_{L_{t}^{1}(0,T;L_{x,v}^{2})}<\infty  \quad \text{for all $T\in [0,\infty)$.}
\end{align}
Furthermore, the solution satisfies the properties of the persistence of regularity and finite mass as follows.
\begin{enumerate}
\item \label{propertity,persistence}
If $f_{0}\in L_{v}^{2,s+\ga+\be}H_{x}^{s+\al}$ for some $\al\geq 0$, $\be\geq 0$, then we have
\begin{equation*}
\left\{
\begin{aligned}
&f(t)\in C([0,\infty);L_{v}^{2,s+\ga+\be}H_{x}^{s+\al}),\\
&\n{\lra{\nabla_{x}}^{s+\al}\lra{v}^{s+\ga+\be}Q^{\pm}(f,f)}_{L_{t}^{1}(0,T;L_{x,v}^{2})}<\infty \quad \text{for all $T\in [0,\infty)$.}
\end{aligned}
\right.
\end{equation*}
\item If $f_{0}\in L_{x,v}^{1}$, then $\n{f(t)}_{L_{x,v}^{1}}\leq \n{f_{0}}_{L_{x,v}^{1}}$.
\end{enumerate}
\end{proposition}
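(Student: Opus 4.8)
The plan is to combine the three ingredients already established --- the global solution with its integrability bounds (Proposition \ref{lemma:global existence,full}), the strong local theory with its blow-up criterion (Theorem \ref{thm:local well-posedness}), and the regularity criteria (Lemma \ref{lemma:regularity criteria}) --- into a bootstrap that turns the a priori \emph{integrable} control on $f$ into \emph{regularity} control, uniformly on every finite time interval. First I would record that the global solution $f$ from Proposition \ref{lemma:global existence,full} satisfies $M_{s+\ga}(0,T)<\infty$ for every finite $T$: this is exactly \eqref{equ:f,Lx6}--\eqref{equ:Q-,f,Lx6} evaluated at $p=6$, admissible because $s\in(1,\tfrac{3}{2})$ forces $\tfrac{6}{3-2s}\geq 6$. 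To obtain an initial foothold of regularity, I invoke Theorem \ref{thm:local well-posedness}: on a short interval $[0,T_{0}]$ there is a strong solution $f_{loc}\in C([0,T_{0}];L_{v}^{2,s+\ga}H_{x}^{s})$, and by the Sobolev embedding $H_{x}^{s}\hookrightarrow L_{x}^{6}$ (valid for $s>1$) together with \eqref{equ:A,f,Lxinfty,local}, $f_{loc}$ satisfies the hypotheses \eqref{equ:f,Lx6,uniqueness}--\eqref{equ:Q+,f,Lx6,uniquness} of the uniqueness Lemma \ref{lemma:conditional uniqueness,boltzmann} on $[0,T_{0}]$; since $f$ does too (with $p=6$), Lemma \ref{lemma:conditional uniqueness,boltzmann} gives $f=f_{loc}$ there, hence $E_{s,s+\ga}(0,T_{0})<\infty$ and $T_{s,s+\ga}^{*}>0$. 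Now the regularity criterion \eqref{criteria,1} of Lemma \ref{lemma:regularity criteria}, applied with $r=s+\ga$ (so $r>1+\ga>0$), upgrades this to $T_{s,s+\ga}^{*}>T$ for every finite $T$, i.e. $T_{s,s+\ga}^{*}=\infty$, which is precisely $\n{\lra{\nabla_{x}}^{s}\lra{v}^{s+\ga}f}_{L_{t}^{\infty}(0,T;L_{x,v}^{2})}<\infty$ and $\n{\lra{\nabla_{x}}^{s}\lra{v}^{s+\ga}Q^{\pm}(f,f)}_{L_{t}^{1}(0,T;L_{x,v}^{2})}<\infty$. Continuity $f\in C([0,\infty);L_{v}^{2,s+\ga}H_{x}^{s})$ then follows from the Duhamel formula $f(t)=S(t)f_{0}+\int_{0}^{t}S(t-\tau)(Q^{+}-Q^{-})(f,f)(\tau)\,d\tau$, the $L_{t}^{1}$-bound just obtained, and the fact that $S(t)=e^{-tv\cdot\nabla_{x}}$ is a strongly continuous group of isometries on $L_{v}^{2,s+\ga}H_{x}^{s}$ (Plancherel in $x$); together with the blow-up criterion this also yields $T(f_{0})=\infty$, so $f$ is in fact the unique global strong solution.

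For the persistence statement, suppose $f_{0}\in L_{v}^{2,s+\ga+\be}H_{x}^{s+\al}$ with $\al,\be\geq 0$. Since $f_{loc}=f$ is a genuine strong solution, a standard persistence-of-regularity argument for the local theory --- run with the same bilinear estimates, in particular the scaling-invariant estimate \eqref{equ:bilinear estimate,gain term,gwp}, whose constant is uniform in $\ve$ (and in $\al,\be$ on bounded ranges) and which allows the mismatched weights $\lra{\ve\nabla_{x}}^{\al}\lra{\ve\nabla_{\xi}}^{\be}$ --- propagates the higher regularity of the datum for a short time: $f\in C([0,T_{1}];L_{v}^{2,s+\ga+\be}H_{x}^{s+\al})$ for some $T_{1}>0$, so $E_{s+\al,s+\ga}(0,T_{1})<\infty$ and $E_{s+\al,s+\ga+\be}(0,T_{1})<\infty$, i.e. $T_{s+\al,s+\ga}^{*}>0$ and $T_{s+\al,s+\ga+\be}^{*}>0$. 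I then rerun Lemma \ref{lemma:regularity criteria} with first index $s+\al$ (still $>1$): criterion \eqref{criteria,1}, together with the unchanged bound $M_{s+\ga}(0,T)<\infty$, gives $T_{s+\al,s+\ga}^{*}=\infty$; then criterion \eqref{criteria,2} with weight increment $\be$, together with $E_{s+\al,s+\ga}(0,T)<\infty$ for all finite $T$, gives $T_{s+\al,s+\ga+\be}^{*}=\infty$. Hence $E_{s+\al,s+\ga+\be}(0,T)<\infty$ for all finite $T$, and continuity into $L_{v}^{2,s+\ga+\be}H_{x}^{s+\al}$ follows as before.

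For the $L^{1}$ bound I return to the Kaniel--Shinbrot iteration \eqref{equ:k-s iteration}, recalling that $h_{n}\uparrow f$ and $0\leq h_{n}\leq g_{n}\leq g_{1}=f^{+}$. Each iterate lies in $L_{x,v}^{1}$: $h_{1}=0$, and $h_{n+1}$ solves the linear transport--damping equation $\partial_{t}h_{n+1}+v\cdot\nabla_{x}h_{n+1}+h_{n+1}A[g_{n}]=Q^{+}(h_{n},h_{n})$ with datum $f_{0}\in L_{x,v}^{1}$, whose gain source is $L_{t,loc}^{1}L_{x,v}^{1}$ because $0\leq h_{n}\leq f^{+}$ and $f^{+}$ obeys the bounds of Proposition \ref{lemma:gwp,gain-only,small data} (so that $\int\!\!\int h_{n}A[h_{n}]\,dxdv$ is controlled by interpolating those bounds with the $L^{1}$ datum). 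Testing the $h_{n+1}$-equation against $1$ --- legitimized by an $(x,v)$-truncation --- and using the collision identity $\int_{\R^{3}}Q^{+}(h_{n},h_{n})(\cdot,v)\,dv=\int_{\R^{3}}h_{n}A[h_{n}](\cdot,v)\,dv$, we get
\[
\frac{d}{dt}\n{h_{n+1}(t)}_{L_{x,v}^{1}}=\int\!\!\int h_{n}A[h_{n}]\,dxdv-\int\!\!\int h_{n+1}A[g_{n}]\,dxdv\leq 0,
\]
because $h_{n}\leq h_{n+1}$ and $A[h_{n}]\leq A[g_{n}]$ (as $0\leq h_{n}\leq g_{n}$ and $B\geq 0$). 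Thus $\n{h_{n+1}(t)}_{L_{x,v}^{1}}\leq\n{f_{0}}_{L_{x,v}^{1}}$, and monotone convergence $h_{n}\uparrow f$ yields $\n{f(t)}_{L_{x,v}^{1}}\leq\n{f_{0}}_{L_{x,v}^{1}}$.

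The heavy analytic input --- the $L_{t}^{1}L_{v}^{2,r}H_{x}^{s}$ bilinear estimate of Lemma \ref{lemma:bilinear estimate,gain term,regularity bound} that powers Lemma \ref{lemma:regularity criteria}, and the $\ve$-uniform, mismatched-index estimate \eqref{equ:bilinear estimate,gain term,gwp} --- is already available, so at the level of this proposition the delicate points are bookkeeping: securing $T^{*}>0$ at each pair of indices by matching $f$ to a strong solution through Lemma \ref{lemma:conditional uniqueness,boltzmann}, which needs the (mismatched-index) local/persistence theory and the Sobolev embeddings to line up, and --- for the $L^{1}$ claim --- making the a priori estimate rigorous at the level of the iterates, i.e. verifying that each $h_{n}$ and the gain source $Q^{+}(h_{n},h_{n})$ are integrable enough to justify testing against $1$, given that the available quantitative bounds are only of Strichartz type (finite $L_{x}^{p}$, hence no spatial decay). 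I expect this last point, together with the mismatched-index persistence needed to start the second bootstrap, to be the parts requiring the most care.
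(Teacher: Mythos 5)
For the main regularity claim and for the persistence of regularity your argument is essentially the paper's: you secure the foothold $T^{*}_{s,s+\ga}>0$ by matching the global solution $f$ with the strong local solution through the uniqueness Lemma \ref{lemma:conditional uniqueness,boltzmann}, then run criterion $(1)$ of Lemma \ref{lemma:regularity criteria} with $M_{s+\ga}(0,T)<\infty$ from Proposition \ref{lemma:global existence,full}, and for higher indices first criterion $(1)$ at spatial index $s+\al$ and then criterion $(2)$ for the extra velocity weight $\be$; this is exactly the paper's bootstrap, including the (shared, and only lightly justified) step of obtaining the mismatched-index foothold $T^{*}_{s+\al,s+\ga}>0$ from the local theory.

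Where you genuinely depart from the paper is the $L^{1}$ bound. The paper does not argue on the Kaniel--Shinbrot iterates: it truncates the datum in velocity, $f_{0}^{N}=\chi(v/N)f_{0}$, uses the just-established persistence of regularity to give $f^{N}$ arbitrarily high velocity moments (hence $L_{x}^{2}L_{v}^{1}$ control of $f^{N}$ and $A[f^{N}]$, so the collision terms are integrable), proves exact $L^{1}_{x,v}$ conservation for $f^{N}$ from the Duhamel formula and the gain/loss cancellation, and then passes to the limit via the Lipschitz continuity of the solution map, non-negativity and Fatou. Your route --- testing the $h_{n+1}$-equation against $1$ and using $h_{n}\leq h_{n+1}$, $A[h_{n}]\leq A[g_{n}]$ to get $\frac{d}{dt}\n{h_{n+1}}_{L^{1}_{x,v}}\leq 0$, then monotone convergence $h_{n}\uparrow f$ --- is more elementary and avoids both the approximation scheme and the persistence input, but one step is wrong as written: the comparison $0\leq h_{n}\leq f^{+}$ does \emph{not} make $Q^{+}(h_{n},h_{n})$ integrable, because $f^{+}$ itself need not lie in $L^{1}_{x,v}$ (the paper's footnote stresses precisely that the gain-term-only solution may leave $L^{1}$), and the bounds of Proposition \ref{lemma:gwp,gain-only,small data} are only $L_{x}^{p}$-based with $p\geq 2$, giving no spatial decay; so ``interpolating those bounds with the $L^{1}$ datum'' does not produce the claimed $L^{1}_{t,loc}L^{1}_{x,v}$ control of the source. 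The fix is an induction on the iterates themselves: $\n{Q^{+}(h_{n},h_{n})}_{L^{1}_{x,v}}=\n{h_{n}A[h_{n}]}_{L^{1}_{x,v}}\leq \n{A[f^{+}]}_{L_{x}^{\infty}L_{v}^{\infty}}\n{h_{n}}_{L^{1}_{x,v}}$, with $A[f^{+}]\in L_{t}^{2}L_{x}^{\infty}L_{v}^{\infty}$ by \eqref{equ:A,f+,Lxinfty} and $\n{h_{n}(t)}_{L^{1}_{x,v}}\leq \n{f_{0}}_{L^{1}_{x,v}}$ known from the previous step ($h_{1}=0$, $h_{2}\leq S(t)f_{0}$); this inductive $L^{1}$ bound is also what makes your $(x,v)$-truncation argument close (it controls the commutator terms when you pass to the limit in the cutoff). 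With that replacement your $L^{1}$ argument goes through and yields the same inequality $\n{f(t)}_{L^{1}_{x,v}}\leq\n{f_{0}}_{L^{1}_{x,v}}$ as the paper's approximation-and-Fatou argument.
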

\begin{proof}[\textbf{Proof of Proposition $\ref{lemma:persistence of regularity}$}]
Since $f_{0}\in L_{v}^{2,s+\ga}H_{x}^{s}$, by the strong local well-posedness in Theorem \ref{thm:local well-posedness}, there exists a local strong solution $f_{loc}(t)$. By the uniqueness Lemma \ref{lemma:conditional uniqueness,boltzmann}, we have that $f(t)=f_{loc}(t)$ before the lifespan $T(f_{0})$, which implies that $T_{s,s+\ga}^{*}>0$.
On the other hand,
by the a priori bound \eqref{equ:f,Lx6} in Proposition \ref{lemma:global existence,full}, we also have $M_{s+\ga}(0,T)<\infty$ for all $T\in [0,\infty)$. Therefore, using the regularity criterion \eqref{criteria,1} in Lemma \ref{lemma:regularity criteria}, we conclude that $T_{s,s+\ga}^{*}=\infty$ and $T(f_{0})=\infty$.

Furthermore, if $f_{0}\in L_{v}^{2,s+\ga+\be}H_{x}^{s+\al}$ for some $\al\geq 0$, $\be\geq 0$, the same argument shows that $T_{s+\al,s+\ga}^{*}=\infty$. Then using the regularity criterion $\eqref{criteria,2}$ in Lemma \ref{lemma:regularity criteria}, we conclude that $T_{s+\al,s+\ga+\be}^{*}=\infty$.

Next, we follow the approximation scheme in \cite{chen2023derivationboltzmann} to prove the property of finite mass.
If $f_{0}\in L_{x,v}^{1}$, we construct the initial approximation data $f_{0}^{N}(x,v)=\chi(v/N)f_{0}(x,v)$ where $\chi(v)$ is the cutoff function. Then by the global existence in Proposition \ref{lemma:global existence,full} and the property \eqref{propertity,persistence} of persistence of regularity, the global solution $f^{N}(t)$ satisfies that
 \begin{align*}
 \n{f^{N}(t)}_{L_{t}^{\infty}(0,T;L_{v}^{2,s+\ga+\be}H_{x}^{s})}<\infty,
 \end{align*}
 for all $\beta\geq 0$ and $T>0$. Next, we prove the $L_{x,v}^{1}$ conservation law for this solution $f^{N}(t)$.
From the Duhamel formula,
\begin{align}\label{equ:duhamel,approximation solution}
f^{N}(t)=S(t)f_{0}^{N}+\int_{0}^{t}S(t-\tau)Q(f^{N},f^{N})(\tau)d\tau,
\end{align}
thanks to that $f_{0}^{N}\in L_{x,v}^{1}$, we only need to prove the $L_{x,v}^{1}$ integrability for the nonlinear term. By Minkowski and H\"{o}lder inequalities, for $t\in [0,T]$ we have
\begin{align*}
\bbn{\int_{0}^{t}S(t-\tau)Q^{\pm}(f^{N},f^{N})(\tau)d\tau}_{L_{x,v}^{1}}
\leq&
\int_{0}^{t}\bn{Q^{\pm}(f^{N},f^{N})(\tau)}_{L_{x,v}^{1}}d\tau\\
\leq& T\n{f^{N}}_{L_{t}^{\infty}(0,T;L_{x}^{2}L_{v}^{1})}\n{A\lrc{f^{N}}}_{L_{t}^{\infty}(0,T;L_{x}^{2}L_{v}^{\infty})}.
\end{align*}
The weighted estimate gives that
\begin{align*}
\n{f^{N}}_{L_{t}^{\infty}(0,T;L_{x}^{2}L_{v}^{1})}\lesssim \n{\lra{v}^{\frac{3}{2}+}f^{N}}_{L_{t}^{\infty}(0,T;L_{x}^{2}L_{v}^{2})}<\infty.
\end{align*}
For $A\lrc{f^{N}}$, using the endpoint Hardy-Littlewood-Sobolev inequality that
\begin{align}
\int |x-y|^{\ga}|u(y)|dy\lesssim \n{u}_{L^{\frac{3}{3+\ga+\delta}}}^{\frac{1}{2}}\n{u}_{L^{\frac{3}{3+\ga-\delta}}}^{\frac{1}{2}},
\end{align}
we have
\begin{align}
\n{A\lrc{f^{N}}}_{L_{x}^{2}L_{v}^{\infty}}\lesssim \n{f^{N}}_{L_{x}^{2}L_{v}^{\frac{3}{3+\ga-\delta}}}^{\frac{1}{2}}\n{f^{N}}_{L_{x}^{2}L_{v}^{\frac{3}{3+\ga+\delta}}}^{\frac{1}{2}}
\lesssim \n{\lra{v}^{\frac{3}{2}+}f^{N}}_{L_{x}^{2}L_{v}^{2}}<\infty,
\end{align}
where in the last inequality we have used the weighted estimate. Therefore, we have obtained the $L_{x,v}^{1}$ integrability of $f^{N}(t)$. Moreover, taking the $L_{x,v}^{1}$ integration on both side of the Duhamel formula \eqref{equ:duhamel,approximation solution}, we arrive at
\begin{align}\label{equ:L1 conservation,approximation}
\int f^{N}(t,x,v)dxdv=&\int S(t)f_{0}^{N}dxdv +\int \int_{0}^{t}S(t-\tau)Q(f^{N},f^{N})(\tau)d\tau dxdv\\
=&\int f_{0}^{N}(x,v)dxdv ,\notag
\end{align}
where in the last equality we have used the $L_{x,v}^{1}$ conservation law of the flow map $S(t)$ and the cancellation property between the gain and loss terms. Notice that the Lipschitz continuous of the solution map gives that
\begin{align*}
\n{f^{N}(t)-f(t)}_{C([0,T];L_{v}^{2,s+\ga}H_{x}^{s})}\lesssim \n{f_{0}^{N}-f}_{L_{v}^{2,s+\ga}H_{x}^{s}}\to 0,
\end{align*}
which implies the pointwise convergence (up to a subsequence). By the non-negativity of $f^{N}(t)$, Fatou's lemma, and the $L_{x,v}^{1}$ uniform estimate \eqref{equ:L1 conservation,approximation},
we have
\begin{align*}
\int f(t,x,v) dxdv \leq \liminf_{N\to \infty}\int f^{N}(t,x,v) dxdv \leq\int f_{0}(x,v) dxdv,
\end{align*}
which completes the proof of the finite mass density.
\end{proof}

We devote the following to proving Lemma \ref{lemma:regularity criteria}, which plays a crucial role in the proof of Proposition \ref{lemma:persistence of regularity}.

\begin{proof}[\textbf{Proof of Lemma $\ref{lemma:regularity criteria}$}]
To obtain the regularity criteria \eqref{criteria,1} and \eqref{criteria,2}, it suffices to prove the following two local estimates respectively.
\begin{enumerate}[$(1)$]
\item
If $0<T_{s,r}^{*}<\infty$ and $M_{r}(0,T_{s,r}^{*})<\infty$, there exists $T_{1}>0$ $($depending on $M_{r}(0,T_{s,r}^{*}))$ such that
\begin{align}\label{equ:regularity bound,energy inequality}
E_{s,r}(t_{0},t_{0}+t_{1})\leq 2E_{s,r}(0,t_{0})
\end{align}
for all $t_{0}\in [0,T_{s,r}^{*})$, $t_{1}\in [0,T_{1}]$ satisfying $t_{0}+t_{1}<T_{s,r}^{*}$.
\item
If $0<T_{s,r+\be}^{*}<\infty$ and $E_{s,r}(0,T_{s,r+\be}^{*})<\infty$, there exists $T_{1}>0$ $($depending on $E_{s,r}(0,T_{s,r+\be}^{*})$$)$ such that
\begin{align}\label{equ:regularity bound,energy inequality,2}
E_{s,r+\be}(t_{0},t_{0}+t_{1})\leq 2E_{s,r+\be}(0,t_{0})
\end{align}
for all $t_{0}\in [0,T_{s,r+\be}^{*})$, $t_{1}\in [0,T_{1}]$ satisfying $t_{0}+t_{1}<T_{s,r+\be}^{*}$.
\end{enumerate}
Indeed, for the regularity criterion $\eqref{criteria,1}$ in Lemma \ref{lemma:regularity criteria}, by a contradiction argument we might as well assume $0<T_{s,r}^{*}\leq T<\infty$, which implies that $M_{r}(0,T_{s,r}^{*})\leq M_{r}(0,T)<\infty$.
 Then by taking $t_{0}=T_{s,r}^{*}-T_{1}$ and $t_{1}\in[0,T_{1})$, we use the local estimate \eqref{equ:regularity bound,energy inequality} to get an upper bound that
\begin{align*}
E_{s,r}(0,t_{0}+t_{1})\leq E_{s,r}(0,t_{0})+E_{s,r}(t_{0},t_{0}+t_{1})\leq 3E_{s,r}(0,t_{0})<\infty,
\end{align*}
which implies
\begin{align}\label{equ:bounded,blowup point}
\lim_{t\nearrow T_{s,r}^{*}}E_{s,r}(0,t)\leq 3E_{s,r}(0,t_{0})<\infty.
\end{align}
Together with the local well-posedness and the blow-up criterion in Theorem \ref{thm:local well-posedness}, the estimate \eqref{equ:bounded,blowup point} gives a contradiction to the definition \eqref{equ:definition,T0} of $T_{s,r}^{*}$.
In the same argument, we also have the regularity criterion \eqref{criteria,2} by using \eqref{equ:regularity bound,energy inequality,2}.

Next, we are left to prove the local estimates \eqref{equ:regularity bound,energy inequality}--\eqref{equ:regularity bound,energy inequality,2}.
For \eqref{equ:regularity bound,energy inequality},
using the Duhamel formula that
\begin{align}\label{equ:duhamel,t0}
f(t_{0}+t)=S(t)f(t_{0})+\int_{0}^{t}S(t-\tau)\mathcal{N}\lrc{f(t_{0}+\tau)}d\tau,
\end{align}
we have
\begin{align}\label{equ:duhamel,t0,nonlinear estimate}
E_{s,r}(t_{0},t_{0}+t_{1})\leq E_{s,r}(0,t_{0})+2\n{\lra{\nabla_{x}}^{s}\lra{v}^{r}Q^{\pm}(f,f)}_{L_{t}^{1}(t_{0},t_{0}+t_{1};L_{x,v}^{2})}.
\end{align}
Therefore, we are left to estimate the term $\n{\lra{\nabla_{x}}^{s}\lra{v}^{r}Q^{\pm}(f,f)}_{L_{t}^{1}(t_{0},t_{0}+t_{1};L_{x,v}^{2})}$.

We first handle the loss term $Q^{-}$. By the fractional Leibliz rule in Lemma \ref{lemma:generalized leibniz rule} and H\"{o}lder inequality,
\begin{align}\label{equ:regularity bound,loss term,step1}
&\n{\lra{\nabla_{x}}^{s}\lra{v}^{r}Q^{-}(f,f)}_{L_{t}^{1}(t_{0},t_{0}+t_{1};L_{x,v}^{2})}\\
\lesssim& \sqrt{t_{1}}\n{A\lrc{f}}_{L_{t}^{2}(t_{0},t_{0}+t_{1};L_{x}^{\infty}L_{v}^{\infty})}
\n{\lra{\nabla_{x}}^{s}\lra{v}^{r}f}_{L_{t}^{\infty}(t_{0},t_{0}+t_{1};L_{x,v}^{2})}\notag\\
&+\sqrt{t_{1}}\n{\lra{v}^{r}f}_{L_{t}^{\infty}(t_{0},t_{0}+t_{1};L_{v}^{2}L_{x}^{6})}
\n{\lra{\nabla_{x}}^{s}A\lrc{f}}_{L_{t}^{2}(t_{0},t_{0}+t_{1};L_{x}^{3}L_{v}^{\infty})}\notag\\
\leq& \sqrt{t_{1}}M_{r}(0,T_{s,r}^{*})
\lrs{E_{s,r}(t_{0},t_{0}+t_{1})+\n{\lra{\nabla_{x}}^{s}A\lrc{f}}_{L_{t}^{2}(t_{0},t_{0}+t_{1};L_{x}^{3}L_{v}^{\infty})}}.\notag
\end{align}
With $r>1+\ga$, we use estimate \eqref{equ:A,f,Lx3,estimate} in Lemma \ref{lemma:A,f,Lx,estimate} to get
\begin{align}\label{equ:regularity bound,loss term,step2}
&\n{\lra{\nabla_{x}}^{s}A\lrc{f}}_{L_{t}^{2}(t_{0},t_{0}+t_{1};L_{x}^{3}L_{v}^{\infty})}\\
\lesssim& \n{\lra{\nabla_{x}}^{s}\lra{v}^{r}f(t_{0})}_{L_{x}^{2}L_{v}^{2}}
+\n{\lra{\nabla_{x}}^{s}\lra{v}^{r}\mathcal{N}\lrc{f}}_{L_{t}^{1}(t_{0},t_{0}+t_{1};L_{x}^{2}L_{v}^{2})}\notag\\
\lesssim& E_{s,r}(t_{0},t_{0}+t_{1}).\notag
\end{align}
Combining estimates \eqref{equ:regularity bound,loss term,step1} and \eqref{equ:regularity bound,loss term,step2}, we obtain
\begin{align}\label{equ:regularity bound,loss term}
\n{\lra{\nabla_{x}}^{s}\lra{v}^{r}Q^{-}(f,f)}_{L_{t}^{1}(t_{0},t_{0}+t_{1};L_{x,v}^{2})}\lesssim \sqrt{t_{1}}M_{r}(0,T_{s,r}^{*})E_{s,r}(t_{0},t_{0}+t_{1}).
\end{align}

We then deal with the gain term $Q^{+}$.
By the Duhamel formula \eqref{equ:duhamel,t0}, we rewrite
\begin{align}
Q^{+}(f,f)(t_{0}+t)=I_{1}+I_{2}+I_{3}+I_{4},
\end{align}
where
\begin{align}
&I_{1}=Q^{+}(S(t)f(t_{0}),S(t)f(t_{0})),\\
&I_{2}=\int_{0}^{t} Q^{+}(S(t-\tau_{1})\mathcal{N}\lrc{f(t_{0}+\tau_{1})},S(t)f(t_{0}))d\tau_{1},\\
&I_{3}=\int_{0}^{t} Q^{+}(S(t)f(t_{0}),S(t-\tau_{2})\mathcal{N}\lrc{f(t_{0}+\tau)})d\tau_{2},\\
&I_{4}=\int_{0}^{t}\int_{0}^{t} Q^{+}(S(t-\tau_{1})\mathcal{N}\lrc{f(t_{0}+\tau_{1})},S(t-\tau_{2})\mathcal{N}\lrc{f(t_{0}+\tau_{2})})d\tau_{1}d\tau_{2}.
\end{align}

To control these terms $I_{1}$--$I_{4}$, we need an $L_{t}^{1}L_{v}^{2,r}H_{x}^{s}$ bilinear estimate as follows:
\begin{align}\label{equ:bilinear estimate,gain term,regularity bound,used}
&\n{\lra{\nabla_{x}}^{s}\lra{v}^{r}Q^{+}(S(t)f_{0},S(t)g_{0})}_{L_{t}^{1}(0,t_{1};L_{x,v}^{2})}\\
\lesssim& \sqrt{t_{1}}\n{\lra{\nabla_{x}}^{s}\lra{v}^{r}f_{0}}_{L_{x,v}^{2}}
\n{\lra{v}^{1+\ga}g_{0}}_{L_{v}^{2}L_{x}^{6}}+\sqrt{t_{1}}
\n{\lra{v}^{1+\ga}f_{0}}_{L_{v}^{2}L_{x}^{6}}\n{\lra{\nabla_{x}}^{s}\lra{v}^{r}g_{0}}_{L_{x,v}^{2}},\notag
\end{align}
the proof of which relies on the frequency analysis techniques like in Section \ref{section:Scaling-invariant Bilinear Estimate for Gain Term}, and is thus postponed to the end. For convenience, we take the notations
\begin{align}
\n{f}_{X}=\n{\lra{\nabla_{x}}^{s}\lra{v}^{r}f}_{L_{x,v}^{2}},\quad \n{f}_{Y}=\n{\lra{v}^{1+\ga}f}_{L_{v}^{2}L_{x}^{6}}.
\end{align}

For $I_{1}$, by the bilinear estimate \eqref{equ:bilinear estimate,gain term,regularity bound,used}, with $r\geq 1+\ga$ we have
\begin{align}\label{equ:regularity bound,gain term,I1}
\n{I_{1}}_{L_{t}^{1}(t_{0},t_{0}+t_{1};X)}\lesssim \sqrt{t_{1}}\n{f(t_{0})}_{Y}\n{f(t_{0})}_{X}\leq \sqrt{t_{1}}M_{r}(0,T_{s,r}^{*})E_{s,r}(t_{0},t_{0}+s).
\end{align}

For $I_{2}$, by Minkowski inequality, we get
\begin{align*}
\n{I_{2}}_{L_{t}^{1}(t_{0},t_{0}+t_{1};X)}\leq& \bbn{\int_{0}^{t} \bn{Q^{+}(S(t-\tau_{1})\mathcal{N}\lrc{f(t_{0}+\tau_{1})},S(t)f(t_{0}))}_{X}d\tau_{1}}_{L_{t}^{1}(0,t_{1})}\\
\leq& \int_{0}^{t_{1}}\bn{Q^{+}(S(t-\tau_{1})\mathcal{N}\lrc{f(t_{0}+\tau_{1})},S(t)f(t_{0}))}_{L_{t}^{1}(0,t_{1};X)}d\tau_{1}.
\end{align*}
Using again the bilinear estimate \eqref{equ:bilinear estimate,gain term,regularity bound,used}, we obtain
\begin{align}\label{equ:regularity bound,gain term,I2}
&\n{I_{2}}_{L_{t}^{1}(t_{0},t_{0}+t_{1};X)}\\
\lesssim&
\int_{0}^{t_{1}}\sqrt{t_{1}}\n{S(-\tau_{1})\mathcal{N}\lrc{f(t_{0}+\tau_{1})}}_{X}\n{f(t_{0})}_{Y}d\tau_{1}\notag\\
&+\int_{0}^{t_{1}}\sqrt{t_{1}}\n{S(-\tau_{1})\mathcal{N}\lrc{f(t_{0}+\tau_{1})}}_{Y}\n{f(t_{0})}_{X}d\tau_{1}\notag\\
\lesssim&\sqrt{t_{1}}\n{Q^{\pm}(f,f)}_{L_{t}^{1}(t_{0},t_{0}+t_{1};X)}\n{f(t_{0})}_{Y}+
\sqrt{t_{1}}\n{Q^{\pm}(f,f)}_{L_{t}^{1}(t_{0},t_{0}+t_{1};Y)}\n{f(t_{0})}_{X}\notag\\
\lesssim& \sqrt{t_{1}}M_{r}(0,T_{s,r}^{*})E_{s,r}(t_{0},t_{0}+t_{1}).\notag
\end{align}

Since the term $I_{3}$ can be estimated in the same way as $I_{2}$, we also obtain
\begin{align}\label{equ:regularity bound,gain term,I3}
\n{I_{3}}_{L_{t}^{1}(t_{0},t_{0}+t_{1};X)}\lesssim \sqrt{t_{1}}M_{r}(0,T_{s,r}^{*})E_{s,r}(t_{0},t_{0}+t_{1}).
\end{align}

For $I_{4}$, by Minkowski inequality, we get
\begin{align*}
&\n{I_{4}}_{L_{t}^{1}(t_{0},t_{0}+t_{1};X)}\\
\leq& \bbn{\int_{0}^{t}\int_{0}^{t}  \bn{ Q^{+}(S(t-\tau_{1})\mathcal{N}\lrc{f(t_{0}+\tau_{1})},S(t-\tau_{2})\mathcal{N}\lrc{f(t_{0}+\tau_{2})})
}_{X}d\tau_{1}d\tau_{2}}_{L_{t}^{1}(0,t_{1})}\\
\leq& \int_{0}^{t_{1}}\int_{0}^{t_{1}}\bn{Q^{+}(S(t-\tau_{1})\mathcal{N}\lrc{f(t_{0}+\tau_{1})},S(t-\tau_{2})\mathcal{N}\lrc{f(t_{0}+\tau_{2})})}_{L_{t}^{1}(0,t_{1};X)}d\tau_{1}d\tau_{2}.
\end{align*}
By the bilinear estimate \eqref{equ:bilinear estimate,gain term,regularity bound,used} again, we have
\begin{align}\label{equ:regularity bound,gain term,I4}
&\n{I_{4}}_{L_{t}^{1}(t_{0},t_{0}+t_{1};X)}\\
\lesssim&
\int_{0}^{t_{1}}\int_{0}^{t_{1}}\sqrt{t_{1}}\n{S(-\tau_{1})\mathcal{N}\lrc{f(t_{0}+\tau_{1})}}_{X}
\n{S(-\tau_{2})\mathcal{N}\lrc{f(t_{0}+\tau_{2})}}_{Y}d\tau_{1}d\tau_{2}\notag\\
&+\int_{0}^{t_{1}}\int_{0}^{t_{1}}\sqrt{t_{1}}\n{S(-\tau_{1})\mathcal{N}\lrc{f(t_{0}+\tau_{1})}}_{Y}
\n{S(-\tau_{2})\mathcal{N}\lrc{f(t_{0}+\tau_{2})}}_{X}d\tau_{1}d\tau_{2}\notag\\
\lesssim&\sqrt{t_{1}}\n{Q^{\pm}(f,f)}_{L_{t}^{1}(t_{0},t_{0}+t_{1};X)}\n{Q^{\pm}(f,f)}_{L_{t}^{1}(t_{0},t_{0}+t_{1};Y)}\notag\\
\lesssim& \sqrt{t_{1}}M_{r}(0,T_{s,r}^{*})E_{s,r}(t_{0},t_{0}+t_{1}).\notag
\end{align}

Putting estimates \eqref{equ:regularity bound,gain term,I1}--\eqref{equ:regularity bound,gain term,I4} together, we arrive at
\begin{align}\label{equ:equ:regularity bound,gain term,final}
\n{\lra{\nabla_{x}}^{s}\lra{v}^{r}Q^{+}(f,f)}_{L_{t}^{1}(t_{0},t_{0}+t_{1};L_{x,v}^{2})}\lesssim \sqrt{t_{1}}M_{r}(0,T_{s,r}^{*})E_{s,r}(t_{0},t_{0}+t_{1}),
\end{align}
which, together with \eqref{equ:duhamel,t0,nonlinear estimate} and the loss term estimate \eqref{equ:regularity bound,loss term}, implies that
\begin{align*}
E_{s,r}(t_{0},t_{0}+t_{1})\leq& E_{s,r}(0,t_{0})+2\n{\lra{\nabla_{x}}^{s}\lra{v}^{r}Q^{\pm}(f,f)}_{L_{t}^{1}(t_{0},t_{0}+t_{1};L_{x,v}^{2})}\\
\leq& E_{s,r}(0,t_{0})+\sqrt{t_{1}}CM_{r}(0,T_{s,r}^{*})E_{s,r}(t_{0},t_{0}+t_{1}).
\end{align*}
By choosing $\sqrt{t_{1}}\leq (2CM_{r}(0,T_{s,r}^{*}))^{-1}$, we complete the proof of \eqref{equ:regularity bound,energy inequality}.

For $\eqref{equ:regularity bound,energy inequality,2}$, we first deal with the loss term. By the fractional Leibliz rule in Lemma \ref{lemma:generalized leibniz rule} and Sobolev inequality, we have
\begin{align}\label{equ:regularity bound,loss term,2}
&\n{\lra{\nabla_{x}}^{s}\lra{v}^{r+\be}Q^{-}(f,f)}_{L_{t}^{1}(t_{0},t_{0}+t_{1};L_{x,v}^{2})}\\
\lesssim& \sqrt{t_{1}}\n{A\lrc{f}}_{L_{t}^{2}(t_{0},t_{0}+t_{1};L_{x}^{\infty}L_{v}^{\infty})}
\n{\lra{\nabla_{x}}^{s}\lra{v}^{r+\be}f}_{L_{t}^{\infty}(t_{0},t_{0}+t_{1};L_{x,v}^{2})}\notag\\
&+\sqrt{t_{1}}\n{\lra{v}^{r+\be}f}_{L_{t}^{\infty}(t_{0},t_{0}+t_{1};L_{v}^{2}L_{x}^{6})}
\n{\lra{\nabla_{x}}^{s}A\lrc{f}}_{L_{t}^{2}(t_{0},t_{0}+t_{1};L_{x}^{3}L_{v}^{\infty})}\notag\\
\lesssim& \sqrt{t_{1}}\n{\lra{\nabla_{x}}^{s}A\lrc{f}}_{L_{t}^{2}(t_{0},t_{0}+t_{1};L_{x}^{3}L_{v}^{\infty})}
\n{\lra{\nabla_{x}}^{s}\lra{v}^{r+\be}f}_{L_{t}^{\infty}(t_{0},t_{0}+t_{1};L_{x,v}^{2})}\notag\\
\leq& \sqrt{t_{1}}E_{s,r}(0,T_{s,r+\be}^{*})E_{s,r+\be}(t_{0},t_{0}+t_{1}),\notag
\end{align}
where in the last inequality we have used estimate \eqref{equ:A,f,Lx3,estimate} in Lemma \ref{lemma:A,f,Lx,estimate} to get
\begin{align*}
&\n{\lra{\nabla_{x}}^{s}A\lrc{f}}_{L_{t}^{2}(t_{0},t_{0}+t_{1};L_{x}^{3}L_{v}^{\infty})}\\
\lesssim& \n{\lra{\nabla_{x}}^{s}\lra{v}^{r}f(t_{0})}_{L_{x}^{2}L_{v}^{2}}
+\n{\lra{\nabla_{x}}^{s}\lra{v}^{r}\mathcal{N}\lrc{f}}_{L_{t}^{1}(t_{0},t_{0}+t_{1};L_{x}^{2}L_{v}^{2})}\notag\\
\lesssim& E_{s,r}(0,T_{s,r+\be}^{*}).\notag
\end{align*}
For the gain term, repeating the proof of \eqref{equ:equ:regularity bound,gain term,final}, we also have
\begin{align}\label{equ:equ:regularity bound,gain term,final,2}
\n{\lra{\nabla_{x}}^{s}\lra{v}^{r+\be}Q^{+}(f,f)}_{L_{t}^{1}(t_{0},t_{0}+t_{1};L_{x,v}^{2})}\lesssim & \sqrt{t_{1}}M_{r}(0,T_{s,r+\be}^{*})E_{s,r+\be}(t_{0},t_{0}+t_{1})\\
\lesssim&\sqrt{t_{1}}E_{s,r}(0,T_{s,r+\be}^{*})E_{s,r+\be}(t_{0},t_{0}+t_{1}).\notag
\end{align}
Combining estimates \eqref{equ:regularity bound,loss term,2} and \eqref{equ:equ:regularity bound,gain term,final,2}, by choosing $\sqrt{t_{1}}\leq (2 CE_{s,r}(0,T_{s,r+\be}^{*}))^{-1}$, we complete the proof of \eqref{equ:regularity bound,energy inequality,2}.

\end{proof}

In the following, we present the proof of the bilinear estimate \eqref{equ:bilinear estimate,gain term,regularity bound,used}, which is essential for the proof of Lemma \ref{lemma:regularity criteria}.

\begin{lemma}\label{lemma:bilinear estimate,gain term,regularity bound}
Let $s>1$, $r\geq 1+\ga>0$. It holds that
\begin{align}\label{equ:bilinear estimate,gain term,regularity bound}
&\n{\lra{\nabla_{x}}^{s}\lra{v}^{r}Q^{+}(S(t)f_{0},S(t)g_{0})}_{L_{t}^{1}(0,T;L_{x,v}^{2})}\\
\lesssim& |T|^{\frac{1}{2}}\n{\lra{\nabla_{x}}^{s}\lra{v}^{r}f_{0}}_{L_{x,v}^{2}}\n{\lra{v}^{1+\ga}g_{0}}_{L_{v}^{2}L_{x}^{6}}+|T|^{\frac{1}{2}}\n{\lra{v}^{1+\ga}f_{0}}_{L_{v}^{2}L_{x}^{6}}\n{\lra{\nabla_{x}}^{s}\lra{v}^{r}g_{0}}_{L_{x,v}^{2}}.\notag
\end{align}
\end{lemma}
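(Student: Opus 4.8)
The plan is to pass to the $v$-Fourier variable and run the Littlewood--Paley argument of Lemma~\ref{lemma:Q+,bilinear estimate,uniqueness,L2}, now carrying along the extra spatial derivative $\lra{\nabla_x}^{s}$. By Plancherel it suffices to bound $\n{\lra{\nabla_x}^{s}\lra{\nabla_\xi}^{r}\wt{Q}^{+}(U(t)\wt{f}_0,U(t)\wt{g}_0)}_{L_t^1(0,T;L_{x,\xi}^2)}$, and testing against $h$ with $\n{h}_{L_t^\infty L_{x,\xi}^2}\le 1$ reduces this to controlling $\int_0^T\!\int \wt{Q}^{+}(U(t)\wt{f}_0,U(t)\wt{g}_0)\,H\,dx\,d\xi\,dt$ with $H=\lra{\nabla_x}^{s}\lra{\nabla_\xi}^{r}h$, i.e. $H$ carrying the negative weights $\lra{\nabla_x}^{-s}\lra{\nabla_\xi}^{-r}$. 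I would then decompose both $\lra{\nabla_x}^{s}$ --- using that $Q^{+}$ is a pointwise product in $x$, so that the fractional Leibniz rule of Lemma~\ref{lemma:generalized leibniz rule} distributes it onto the two inputs --- and the $\xi$-weight $\lra{\nabla_\xi}^{r}$, via Littlewood--Paley projectors $P_M^\xi$ together with the energy-conservation constraint $|v|\le|v^{*}|+|u^{*}|$ already used in \eqref{equ:property,constraint,projector,v,variable}, which gives $M\lesssim\max(M_1,M_2)$. This splits the estimate into blocks according to which input carries the output spatial frequency and which carries the output velocity frequency, exactly as in the Cases of Lemma~\ref{lemma:Q+,bilinear estimate,uniqueness,L2} and Lemma~\ref{lemma:bilinear estimate,gain term,gwp}.

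In each block the full weight $\lra{\nabla_x}^{s}\lra{v}^{r}$ is routed onto the ``heavy'' input, which is then placed, via the convolution-type estimates \eqref{equ:Q+,bilinear estimate,PM,f,g}--\eqref{equ:Q+,bilinear estimate,L3,PM,g,f} of Lemma~\ref{lemma:Q+,bilinear estimate}, into $L_\xi^{3}$ (or $L_\xi^{2}$ for the high-high-to-low blocks), hence after Hölder in $x$ with the splitting $\tfrac13+\tfrac16+\tfrac12=1$ into the Strichartz space $L_t^{2}L_{x,\xi}^{3}$ --- admissible since $\tfrac22+\tfrac63=3$ --- so that the linear Strichartz bound \eqref{equ:strichartz estimate,linear} turns it into $\n{\lra{\nabla_x}^{s}\lra{v}^{r}f_0}_{L_{x,v}^2}$, respectively $\n{\lra{\nabla_x}^{s}\lra{v}^{r}g_0}_{L_{x,v}^2}$. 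The other, ``light'' input only has to be controlled in $L_x^{6}L_\xi^{6/(1-2\ga)}$: the $L_\xi^{6/(1-2\ga)}$ norm is bought from the velocity weight by the critical Sobolev embedding $W_\xi^{1+\ga,2}\hookrightarrow L_\xi^{6/(1-2\ga)}$, which costs exactly $\lra{v}^{1+\ga}$, while in $x$ one passes to the physical side (where $L_\xi^2=L_v^2$) and uses that $S(t)$ is a translation in $x$, hence an isometry of every $L_x^{p}$, so that a Hölder in $t$ yields $|T|^{1/2}\n{\lra{v}^{1+\ga}g_0}_{L_v^2L_x^6}$, respectively with $f_0$. The Littlewood--Paley pieces are summed only after Strichartz has put everything into $L^{2}$-based norms, by Cauchy--Schwarz in the frequency parameters together with Minkowski's inequality to interchange the $\ell^{2}$-sums with the $L_x^{6}$ norm; tracking the two possible choices of heavy input produces the two terms on the right of \eqref{equ:bilinear estimate,gain term,regularity bound}.

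The main obstacle is the weight bookkeeping in the mixed frequency configurations, where one input carries the high $x$-frequency and the other the high $\xi$-frequency, so that $\lra{\nabla_x}^{s}$ and $\lra{\nabla_\xi}^{r}$ naturally want to sit on different inputs even though the target right-hand side is asymmetric --- measuring one input in $L_v^{2,r}H_x^{s}$ and the other only in $L_v^{2,1+\ga}L_x^{6}$. This is precisely where the hypotheses $s>1$ and $r\ge 1+\ga>0$ enter: $r\ge 1+\ga$ (with $\ga\ge-\tfrac12$, so that $1+\ga\ge\tfrac12$) ensures that the velocity weight demanded on the light factor by the $L_\xi^{6/(1-2\ga)}$- and $L_\xi^{3}$-Sobolev embeddings never exceeds the weight $\lra{v}^{r}$ carried by the heavy factor, and $s>1$ both supplies positive powers for the spatial frequency summation and gives the embedding $H_x^{s}\hookrightarrow L_x^{6}$, which is what lets a factor that lands in $L_x^{6}$ be absorbed into the $H_x^{s}$-slot. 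This step is the analogue of Case B in the proof of Lemma~\ref{lemma:bilinear estimate,gain term,gwp}; once it is arranged, the estimate closes and gives the claimed bound.
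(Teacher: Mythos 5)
Your overall scheme is the paper's: Plancherel, duality against $h$ carrying the weights $\lra{\nabla_x}^{-s}\lra{\nabla_\xi}^{-r}$, Littlewood--Paley in $x$ and $\xi$ with the energy-conservation constraint $M\lesssim\max(M_1,M_2)$, the convolution estimates of Lemma \ref{lemma:Q+,bilinear estimate} together with $W^{1+\ga,2}_\xi\hookrightarrow L_\xi^{6/(1-2\ga)}$ on the low-frequency factor, the Strichartz space $L_t^2L_{x,\xi}^3$ for the factor that keeps all the derivatives, and translation invariance of $S(t)$ in $x$ plus H\"older in $t$ to produce $|T|^{1/2}\n{\lra{v}^{1+\ga}\cdot}_{L_v^2L_x^6}$ for the other factor (whether you organize the spatial derivative by Leibniz or by $P_N^x$-blocks is immaterial). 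Where your proposal has a genuine gap is exactly the point you flag as "the main obstacle": the mixed blocks, where the high spatial frequency $N_1$ sits on one input and the high velocity frequency $M_2$ on the other. Your stated resolution -- that $r\ge 1+\ga$ plus $H_x^s\hookrightarrow L_x^6$ lets the $L_x^6$ factor be "absorbed into the $H_x^s$-slot" -- does not close this block.

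Concretely, say $\wt f$ is localized at $N_1$ and $\wt g$ at $M_2$, with $N\lesssim N_1$, $M\lesssim M_2$ on $h$. Since testing against $h$ produces the factor $M^{r}$, the full $\lra{\nabla_\xi}^{r}$ must be spent on the $M_2$-localized input $\wt g$, and that input must end up in an $L^2$-based norm, i.e. in the Strichartz slot $L_t^2L_x^3L_\xi^3$: there is no admissible pair with spatial exponent $6$, and if $\wt g$ instead takes the translation/$L_t^\infty L_x^6$ route it must be $L_\xi^2$-based with weight at most $r$, so after the Sobolev cost of the convolution estimate the dyadic gain falls short of the required $M_2^{-r}$ by a positive power of $M_2$. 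By the H\"older split $\tfrac13+\tfrac16+\tfrac12$ the other input $\wt f$ is then forced into $L_x^6$, yet it is the only input at frequency $N_1$ and must beat the factor $N^{s}$ coming from $h$. The light slot $\n{\lra{v}^{1+\ga}f_0}_{L_v^2L_x^6}$ carries no $x$-derivatives, hence gives no gain; and if you hang $\lra{\nabla_x}^{s}$ on $\wt f$ and aim for the heavy slot, converting the $L_x^6$-based square function into the $L_x^2$-based norm costs a full extra power of $N_1$ (Bernstein, $3(\tfrac12-\tfrac16)=1$), so the spatial sum still diverges; $s>1$ does not help because the derivative budget is exactly $s$, and $H_x^{s}\hookrightarrow L_x^{6}$ is a comparison of data norms -- it produces no decay in $N_1$. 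Note that this is not how the paper argues either: in its Case B both dyadic gains, $(M/M_2)^{r}$ and $(N/N_1)^{s}$, are extracted from the single input that receives all the weights $\lra{\nabla_x}^{s}\lra{\nabla_\xi}^{r}$ and is then sent through Strichartz, yielding the second term of \eqref{equ:bilinear estimate,gain term,regularity bound}, while the remaining input is bounded crudely in $L_x^6L_\xi^2$; your plan of splitting the two gains between the two inputs in a genuinely mixed block, with the $L_x^6$ factor recovered via $H_x^{s}\hookrightarrow L_x^{6}$, does not yield the claimed summation and would need a different treatment of that configuration.
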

\begin{proof}
By Plancherel identity, it suffices to prove that
\begin{align}\label{equ:bilinear estimate,gain term,regularity bound,xi}
&\n{\lra{\nabla_{x}}^{s}\lra{\nabla_{\xi}}^{r}\wt{Q}^{+}(U(t)\wt{f}(t_{0}),U(t)\wt{g}(t_{0}))}_{L_{t}^{1}(0,T;L_{x,\xi}^{2})}\\
\lesssim& |T|^{\frac{1}{2}}\n{\lra{\nabla_{x}}^{s}\lra{v}^{r}f_{0}}_{L_{x,v}^{2}}\n{\lra{v}^{1+\ga}g_{0}}_{L_{v}^{2}L_{x}^{6}}
+|T|^{\frac{1}{2}}\n{\lra{v}^{1+\ga}f_{0}}_{L_{v}^{2}L_{x}^{6}}\n{\lra{\nabla_{x}}^{s}\lra{v}^{r}g_{0}}_{L_{x,v}^{2}}.\notag
\end{align}
By duality, \eqref{equ:bilinear estimate,gain term,regularity bound,xi} is equivalent to
\begin{align}\label{equ:bilinear estimate,gain term,regularity bound,xi,duality}
&\int \wt{Q}^{+}(U(t)\wt{f}_{0},U(t)\wt{g}_{0}) h dx d\xi dt\\
\lesssim& |T|^{\frac{1}{2}}\n{\lra{\nabla_{x}}^{s}\lra{v}^{r}f_{0}}_{L_{x,v}^{2}}
\n{\lra{v}^{1+\ga}g_{0}}_{L_{v}^{2}L_{x}^{6}}\n{\lra{\nabla_{x}}^{-s}\lra{\nabla_{\xi}}^{-r}h}_{L_{t}^{\infty}(0,T;L_{x,\xi}^{2})}\notag\\
&+|T|^{\frac{1}{2}}\n{\lra{v}^{1+\ga}f_{0}}_{L_{v}^{2}L_{x}^{6}}\n{\lra{\nabla_{x}}^{s}\lra{v}^{r}g_{0}}_{L_{x,v}^{2}}
\n{\lra{\nabla_{x}}^{-s}\lra{\nabla_{\xi}}^{-r}h}_{L_{t}^{\infty}(0,T;L_{x,\xi}^{2})}.\notag
\end{align}

We denote by $I$ the integral in \eqref{equ:bilinear estimate,gain term,regularity bound,xi,duality} and
insert a Littlewood-Paley decomposition such that
\begin{align*}
I=\sum_{\substack{M,M_{1},M_{2}\\N,N_{1},N_{2}}} I_{M,M_{1},M_{2},N,N_{1},N_{2}}
\end{align*}
where
\begin{align*}
I_{M,M_{1},M_{2},N,N_{1},N_{2}}= \int \wt{Q}^{+}(P_{N_{1}}^{x}P_{M_{1}}^{\xi}\wt{f},P_{N_{2}}^{x}P_{M_{2}}^{\xi}\wt{g}) P_{N}^{x}P_{M}^{\xi}h dx d\xi dt,
\end{align*}
with $\wt{f}(t)=U(t)\wt{f}_{0}$ and $\wt{g}(t)=U(t)\wt{g}_{0}$. In the same way as the frequency analysis of \eqref{equ:property,constraint,projector}--\eqref{equ:property,constraint,projector,v,variable}, we have the constraints that $N\lesssim
\max\lrs{N_{1},N_{2}}$ and $M\lesssim\max\lrs{M_{1},M_{2}}$.

We divide the sum into four cases as follows

Case A. $M_{1}\geq M_{2}$, $N_{1}\geq N_{2}$.

Case B. $M_{1}\leq M_{2}$, $N_{1}\geq N_{2}$.

Case C. $M_{1}\geq M_{2}$, $N_{1}\leq N_{2}$.

Case D. $M_{1}\leq  M_{2}$, $N_{1}\leq N_{2}$.

We only handle Cases A and B, as Cases C and D can be dealt with in a similar way.

\textbf{Case A. $M_{1}\leq M_{2}$, $N_{1}\geq N_{2}$.}

Let $I_{A}$ denote the integral restricted to the Case A.

\begin{align*}
I_{A}=&\sum_{\substack{M_{1}\geq M_{2},M_{1}\gtrsim M\\N_{1}\geq N_{2},N_{1}\gtrsim N }}\int \wt{Q}^{+}(P_{N_{1}}^{x}P_{M_{1}}^{\xi}\wt{f},P_{N_{2}}^{x}P_{M_{2}}^{\xi}\wt{g}) P_{N}^{x}P_{M}^{\xi}h dx d\xi dt\\
=&\sum_{\substack{M_{1}\gtrsim M\\N_{1}\gtrsim N }}\int \wt{Q}^{+}(P_{N_{1}}^{x}P_{M_{1}}^{\xi}\wt{f},P_{\leq N_{1}}^{x}P_{\leq M_{1}}^{\xi}\wt{g}) P_{N}^{x}P_{M}^{\xi}h dx d\xi dt
\end{align*}
where in the last equality we have done the sum in $M_{2}$ and $N_{2}$. By using H\"{o}lder inequality and then estimate \eqref{equ:Q+,bilinear estimate,L3,PM,f,g} in Lemma \ref{equ:Q+,bilinear estimate}, we have
\begin{align*}
I_{A}\leq& \int \sum_{\substack{M_{1}\gtrsim M\\N_{1}\gtrsim N }} \n{\wt{Q}^{+}(P_{N_{1}}^{x}P_{M_{1}}^{\xi}\wt{f},P_{\leq N_{1}}^{x}P_{\leq M_{1}}^{\xi}\wt{g})}_{L_{\xi}^{2}} \n{P_{N}^{x}P_{M}^{\xi}h}_{L_{\xi}^{2}} dx  dt\\
\lesssim& \int \sum_{\substack{M_{1}\gtrsim M\\N_{1}\gtrsim N }} \n{P_{N_{1}}^{x}P_{M_{1}}^{\xi}\wt{f}}_{L_{\xi}^{3}}\n{P_{\leq N_{1}}^{x}P_{\leq M_{1}}^{\xi}\wt{g}}_{L_{\xi}^{\frac{6}{1-2\ga}}} \n{P_{N}^{x}P_{M}^{\xi}h}_{L_{\xi}^{2}} dx  dt.
\end{align*}
By Bernstein inequality and Sobolev inequality that $W^{1+\ga,2}\hookrightarrow L^{\frac{6}{1-2\ga}}$,
\begin{align*}
I_{A}\leq& \int \sum_{\substack{M_{1}\gtrsim M\\N_{1}\gtrsim N }} \frac{M^{r}}{M_{1}^{r}}
\n{\lra{\nabla_{\xi}}^{r}P_{N_{1}}^{x}P_{M_{1}}^{\xi}\wt{f}}_{L_{\xi}^{3}}\n{P_{\leq N_{1}}^{x}\lra{\nabla_{\xi}}^{1+\ga}\wt{g}}_{L_{\xi}^{2}} \n{\lra{\nabla_{\xi}}^{-r}P_{N}^{x}P_{M}^{\xi}h}_{L_{\xi}^{2}} dx  dt.
\end{align*}
With $r>0$, we use Cauchy-Schwarz in $M$ and $M_{1}$ to get
\begin{align*}
I_{A}\leq& \int \sum_{N_{1}\gtrsim N}\n{P_{\leq N_{1}}^{x}\lra{\nabla_{\xi}}^{1+\ga}\wt{g}}_{L_{\xi}^{2}}
\lrs{\sum_{M_{1}\gtrsim M}\frac{M^{r}}{M_{1}^{r}}
\n{\lra{\nabla_{\xi}}^{r}P_{N_{1}}^{x}P_{M_{1}}^{\xi}\wt{f}}_{L_{\xi}^{3}}^{2}}^{\frac{1}{2}}\\
&\lrs{\sum_{M_{1}\gtrsim M}\frac{M^{r}}{M_{1}^{r}}
\n{\lra{\nabla_{\xi}}^{-r}P_{N}^{x}P_{M}^{\xi}h}_{L_{\xi}^{2}}^{2}}^{\frac{1}{2}}dxdt\\
\lesssim &\int \sum_{N_{1}\gtrsim N}\n{P_{\leq N_{1}}^{x}\lra{\nabla_{\xi}}^{1+\ga}\wt{g}}_{L_{\xi}^{2}}
\n{\lra{\nabla_{\xi}}^{r}P_{N_{1}}^{x}P_{M_{1}}^{\xi}\wt{f}}_{l_{M_{1}}^{2}L_{\xi}^{3}}\\
&\n{\lra{\nabla_{\xi}}^{-r}P_{N}^{x}h}_{L_{\xi}^{2}} dxdt.
\end{align*}
By H\"{o}lder inequality in the $x$-variable,
\begin{align*}
I_{A}\lesssim &
\int \sum_{N_{1}\gtrsim N}\n{P_{\leq N_{1}}^{x}\lra{\nabla_{\xi}}^{1+\ga}\wt{g}}_{L_{x}^{6}L_{\xi}^{2}}
\n{\lra{\nabla_{\xi}}^{r}P_{N_{1}}^{x}P_{M_{1}}^{\xi}\wt{f}}_{L_{x}^{3}l_{M_{1}}^{2}L_{\xi}^{3}}\n{\lra{\nabla_{\xi}}^{-r}P_{N}^{x}h}_{L_{x}^{2}L_{\xi}^{2}} dt.
\end{align*}
By Minkowski inequality and Bernstein inequality,
\begin{align*}
I_{A}\lesssim &
\int \sum_{N_{1}\gtrsim N}\n{P_{\leq N_{1}}^{x}\lra{\nabla_{\xi}}^{1+\ga}\wt{g}}_{L_{x}^{6}L_{\xi}^{2}}
\n{\lra{\nabla_{\xi}}^{r}P_{N_{1}}^{x}P_{M_{1}}^{\xi}\wt{f}}_{l_{M_{1}}^{2}L_{x}^{3}L_{\xi}^{3}}\n{\lra{\nabla_{\xi}}^{-r}P_{N}^{x}h}_{L_{x}^{2}L_{\xi}^{2}} dt\\
\lesssim &
\int \n{\lra{\nabla_{\xi}}^{1+\ga}\wt{g}}_{L_{x}^{6}L_{\xi}^{2}}
\sum_{N_{1}\gtrsim N} \frac{N^{s}}{N_{1}^{s}}\n{\lra{\nabla_{x}}^{s}\lra{\nabla_{\xi}}^{r}P_{N_{1}}^{x}P_{M_{1}}^{\xi}\wt{f}}_{l_{M_{1}}^{2}L_{x}^{3}L_{\xi}^{3}}\\
&\n{\lra{\nabla_{x}}^{-s}\lra{\nabla_{\xi}}^{-r}P_{N}^{x}h}_{L_{x}^{2}L_{\xi}^{2}} dt
\end{align*}
where in the last inequality we have used that
\begin{align*}
\n{P_{\leq N_{1}}^{x}\lra{\nabla_{\xi}}^{1+\ga}\wt{g}}_{L_{x}^{6}L_{\xi}^{2}}=&\n{\mathcal{F}^{-1}(\vp_{\leq N_{1}}^{x})*\lra{\nabla_{\xi}}^{1+\ga}\wt{g}}_{L_{x}^{6}L_{\xi}^{2}}\\
\lesssim& \bn{\mathcal{F}^{-1}(\vp_{\leq N_{1}}^{x})*\n{\lra{\nabla_{\xi}}^{1+\ga}\wt{g}}_{L_{\xi}^{2}}}_{L_{x}^{6}}\lesssim  \n{\lra{\nabla_{\xi}}^{1+\ga}\wt{g}}_{L_{x}^{6}L_{\xi}^{2}}.
\end{align*}
By Cauchy-Schwarz in $N$ and $N_{1}$,
\begin{align*}
I_{A}\lesssim &\int \n{\lra{\nabla_{\xi}}^{1+\ga}\wt{g}}_{L_{x}^{6}L_{\xi}^{2}}\n{\lra{\nabla_{x}}^{s}\lra{\nabla_{\xi}}^{r}
P_{N_{1}}^{x}P_{M_{1}}^{\xi}\wt{f}}_{l_{N_{1}}^{2}l_{M_{1}}^{2}L_{x}^{3}L_{\xi}^{3}}\n{\lra{\nabla_{x}}^{-s}\lra{\nabla_{\xi}}^{-r}h}_{L_{x}^{2}L_{\xi}^{2}} dt.
\end{align*}
By H\"{o}lder inequality in the $t$-variable,
\begin{align*}
I_{A}\lesssim &\n{\lra{\nabla_{\xi}}^{1+\ga}\wt{g}}_{L_{t}^{2}(0,T;L_{x}^{6}L_{\xi}^{2})}\n{\lra{\nabla_{x}}^{s}\lra{\nabla_{\xi}}^{r}
P_{N_{1}}^{x}P_{M_{1}}^{\xi}\wt{f}}_{L_{t}^{2}(0,T;l_{N_{1}}^{2}l_{M_{1}}^{2}L_{x}^{3}L_{\xi}^{3})}\\
&\n{\lra{\nabla_{x}}^{-s}\lra{\nabla_{\xi}}^{-r}h}_{L_{t}^{\infty}(0,T;L_{x}^{2}L_{\xi}^{2})}.
\end{align*}
Inserting in $\wt{f}(t)=U(t)\wt{f}_{0}$ and $\wt{g}(t)=U(t)\wt{g}_{0}$, we use Strichartz estimate \eqref{equ:strichartz estimate,linear} to obtain
\begin{align*}
I_{A}\lesssim & \n{\lra{\nabla_{\xi}}^{1+\ga}U(t)\wt{g}_{0}}_{L_{t}^{2}(0,T;L_{x}^{6}L_{\xi}^{2})}
\n{P_{N_{1}}^{x}P_{M_{1}}^{\xi}\wt{f}_{0}}_{l_{N_{1}}^{2}l_{M_{1}}^{2}H_{x}^{s}H_{\xi}^{r}}\n{\lra{\nabla_{x}}^{-s}\lra{\nabla_{\xi}}^{-r}h}_{L_{t}^{\infty}(0,T;L_{x}^{2}L_{\xi}^{2})}\\
\lesssim & |T|^{\frac{1}{2}}\n{\lra{\nabla_{\xi}}^{1+\ga}U(t)\wt{g}_{0}}_{L_{t}^{\infty}(0,T;L_{x}^{6}L_{\xi}^{2})}
\n{\wt{f}_{0}}_{H_{x}^{s}H_{\xi}^{r}}\n{\lra{\nabla_{x}}^{-s}\lra{\nabla_{\xi}}^{-r}h}_{L_{t}^{\infty}(0,T;L_{x}^{2}L_{\xi}^{2})}\\
\lesssim&|T|^{\frac{1}{2}}\n{\lra{v}^{1+\ga}g_{0}}_{L_{v}^{2}L_{x}^{6}}
\n{\lra{\nabla_{x}}^{s}\lra{v}^{r}f_{0}}_{L_{x,v}^{2}}\n{\lra{\nabla_{x}}^{-s}\lra{\nabla_{\xi}}^{-r}h}_{L_{t}^{\infty}(0,T;L_{x}^{2}L_{\xi}^{2})},
\end{align*}
where in the last inequality we have used that
\begin{align*}
\n{\lra{\nabla_{\xi}}^{r}U(t)\wt{g}_{0}}_{L_{t}^{\infty}(0,T;L_{x}^{6}L_{\xi}^{2})}=
\n{\lra{v}^{r}S(t)g_{0}}_{L_{t}^{\infty}(0,T;L_{x}^{6}L_{v}^{2})}\lesssim \n{\lra{v}^{r}g_{0}}_{L_{v}^{2}L_{x}^{6}}.
\end{align*}
Therefore, we have completed the proof of \eqref{equ:bilinear estimate,gain term,regularity bound,xi,duality} for Case A.

\textbf{Case B. $M_{1}\leq M_{2}$, $N_{1}\geq N_{2}$.}

Let $I_{B}$ denote the integral restricted to the Case B.

\begin{align*}
I_{B}=&\sum_{\substack{M_{2}\geq M_{1},M_{2}\gtrsim M\\N_{1}\geq N_{2},N_{1}\gtrsim N }}\int \wt{Q}^{+}(P_{N_{1}}^{x}P_{M_{1}}^{\xi}\wt{f},P_{N_{2}}^{x}P_{M_{2}}^{\xi}\wt{g}) P_{N}^{x}P_{M}^{\xi}h dx d\xi dt\\
=&\sum_{\substack{M_{2}\gtrsim M\\N_{1}\gtrsim N }}\int \wt{Q}^{+}(P_{N_{1}}^{x}P_{\leq M_{2}}^{\xi}\wt{f},P_{\leq N_{1}}^{x}P_{M_{2}}^{\xi}\wt{g}) P_{N}^{x}P_{M}^{\xi}h dx d\xi dt
\end{align*}
where we have done the sum in $M_{1}$ and $N_{2}$. By using H\"{o}lder inequality and then estimate \eqref{equ:Q+,bilinear estimate,L3,PM,g,f} in Lemma \ref{equ:Q+,bilinear estimate}, we have
\begin{align*}
I_{B}\leq &\int \sum_{\substack{M_{2}\gtrsim M\\N_{1}\gtrsim N }}\n{\wt{Q}^{+}(P_{N_{1}}^{x}P_{\leq M_{2}}^{\xi}\wt{f},P_{\leq N_{1}}^{x}P_{M_{2}}^{\xi}\wt{g})}_{L_{\xi}^{2}} \n{P_{N}^{x}P_{M}^{\xi}h}_{L_{\xi}^{2}} dx dt\\
\lesssim&\int \sum_{\substack{M_{2}\gtrsim M\\N_{1}\gtrsim N }}\n{P_{N_{1}}^{x}P_{\leq M_{2}}^{\xi}\wt{f}}_{L_{\xi}^{\frac{6}{1-2\ga}}}\n{P_{\leq N_{1}}^{x}P_{M_{2}}^{\xi}\wt{g}}_{L_{\xi}^{3}} \n{P_{N}^{x}P_{M}^{\xi}h}_{L_{\xi}^{2}} dx dt.
\end{align*}
By Bernstein inequality and Sobolev inequality that $W^{1+\ga,2}\hookrightarrow L^{\frac{6}{1-2\ga}}$,
\begin{align*}
I_{B}\leq& \int \sum_{N_{1}\gtrsim N}\n{P_{N_{1}}^{x}\lra{\nabla_{\xi}}^{1+\ga}\wt{f}}_{L_{\xi}^{2}}\sum_{M_{2}\gtrsim M} \frac{M^{r}}{M_{2}^{r}}
\n{\lra{\nabla_{\xi}}^{r}P_{\leq N_{1}}^{x}P_{M_{2}}^{\xi}\wt{g}}_{L_{\xi}^{3}} \\
& \n{\lra{\nabla_{\xi}}^{-r}P_{N}^{x}P_{M}^{\xi}h}_{L_{\xi}^{2}} dx  dt.
\end{align*}
With $r>0$, we use Cauchy-Schwarz in $M$ and $M_{2}$ to get
\begin{align*}
I_{B}\leq& \int \sum_{N_{1}\gtrsim N}
 \n{P_{N_{1}}^{x}\lra{\nabla_{\xi}}^{1+\ga}\wt{f}}_{L_{\xi}^{2}}
\lrs{\sum_{M_{2}\gtrsim M}\frac{M^{r}}{M_{2}^{r}}
\n{\lra{\nabla_{\xi}}^{r}P_{\leq N_{1}}^{x}P_{M_{2}}^{\xi}\wt{g}}_{L_{\xi}^{3}}^{2}}^{\frac{1}{2}}\\
&\lrs{\sum_{M_{2}\gtrsim M}\frac{M^{r}}{M_{2}^{r}}
\n{\lra{\nabla_{\xi}}^{-r}P_{N}^{x}P_{M}^{\xi}h}_{L_{\xi}^{2}}^{2}}^{\frac{1}{2}}dxdt\\
\lesssim &\int \sum_{N_{1}\gtrsim N}
\n{\lra{\nabla_{\xi}}^{1+\ga}P_{N_{1}}^{x}\wt{f}}_{L_{\xi}^{2}}
\n{\lra{\nabla_{\xi}}^{r}P_{\leq N_{1}}^{x}P_{M_{2}}^{\xi}\wt{g}}_{l_{M_{2}}^{2}L_{\xi}^{3}}
\n{\lra{\nabla_{\xi}}^{-r}P_{N}^{x}h}_{L_{\xi}^{2}} dxdt.
\end{align*}
By H\"{o}lder inequality in the $x$-variable,
\begin{align*}
I_{B}\lesssim &
\int \sum_{N_{1}\gtrsim N}
\n{P_{N_{1}}^{x}\lra{\nabla_{\xi}}^{1+\ga}\wt{f}}_{L_{x}^{6}L_{\xi}^{2}}
\n{\lra{\nabla_{\xi}}^{r}P_{\leq N_{1}}^{x}P_{M_{2}}^{\xi}\wt{g}}_{L_{x}^{3}l_{M_{2}}^{2}L_{\xi}^{3}}\n{\lra{\nabla_{\xi}}^{-r}P_{N}^{x}h}_{L_{x}^{2}L_{\xi}^{2}} dt.
\end{align*}
By using that $\n{P_{N_{1}}^{x}\lra{\nabla_{\xi}}^{1+\ga}\wt{f}}_{L_{x}^{6}L_{\xi}^{2}}\lesssim \n{\lra{\nabla_{\xi}}^{1+\ga}\wt{f}}_{L_{x}^{6}L_{\xi}^{2}}$, Minkowski inequality, and Bernstein inequality,
\begin{align*}
I_{B}
\lesssim &
\int \n{\lra{\nabla_{\xi}}^{1+\ga}\wt{f}}_{L_{x}^{6}L_{\xi}^{2}}
\sum_{N_{1}\gtrsim N} \frac{N^{s}}{N_{1}^{s}}
\n{\lra{\nabla_{x}}^{s}\lra{\nabla_{\xi}}^{r}P_{N_{1}}^{x}
P_{M_{2}}^{\xi}\wt{g}}_{l_{M_{2}}^{2}L_{x}^{3}L_{\xi}^{3}}\\
&\n{\lra{\nabla_{x}}^{-s}\lra{\nabla_{\xi}}^{-r}P_{N}^{x}h}_{L_{x}^{2}L_{\xi}^{2}} dt.
\end{align*}
By Cauchy-Schwarz in $N$ and $N_{1}$,
\begin{align*}
I_{B}\lesssim &
\int \n{\lra{\nabla_{\xi}}^{1+\ga}\wt{f}}_{L_{x}^{6}L_{\xi}^{2}}
\n{\lra{\nabla_{x}}^{s}\lra{\nabla_{\xi}}^{r}
P_{N_{1}}^{x}P_{M_{2}}^{\xi}\wt{g}}_{l_{N_{1}}^{2}l_{M_{2}}^{2}L_{x}^{3}L_{\xi}^{3}}\n{\lra{\nabla_{x}}^{-s}\lra{\nabla_{\xi}}^{-r}h}_{L_{x}^{2}L_{\xi}^{2}} dt.
\end{align*}
By H\"{o}lder inequality in the $t$-variable,
\begin{align*}
I_{B}\lesssim &\n{\lra{\nabla_{\xi}}^{1+\ga}\wt{f}}_{L_{t}^{2}(0,T;L_{x}^{6}L_{\xi}^{2})}
\n{\lra{\nabla_{x}}^{s}\lra{\nabla_{\xi}}^{r}
P_{N_{1}}^{x}P_{M_{2}}^{\xi}\wt{g}}_{L_{t}^{2}(0,T;l_{N_{1}}^{2}l_{M_{2}}^{2}L_{x}^{3}L_{\xi}^{3})}\\
&\n{\lra{\nabla_{x}}^{-s}\lra{\nabla_{\xi}}^{-r}h}_{L_{t}^{\infty}(0,T;L_{x}^{2}L_{\xi}^{2})}.
\end{align*}
Inserting in $\wt{f}(t)=U(t)\wt{f}_{0}$ and $\wt{g}(t)=U(t)\wt{g}_{0}$, we use Strichartz estimate \eqref{equ:strichartz estimate,linear} to obtain
\begin{align*}
I_{B}\lesssim &\n{\lra{\nabla_{\xi}}^{1+\ga}U(t)\wt{f}_{0}}_{L_{t}^{2}(0,T;L_{x}^{6}L_{\xi}^{2})}
\n{\lra{\nabla_{x}}^{s}\lra{\nabla_{\xi}}^{r}P_{N_{1}}^{x}P_{M_{2}}^{\xi}\wt{g}_{0}}_{l_{N_{1}}^{2}l_{M_{2}}^{2}
L_{x}^{2}L_{\xi}^{2}}\\
&\n{\lra{\nabla_{x}}^{-s}\lra{\nabla_{\xi}}^{-r}h}_{L_{t}^{\infty}(0,T;L_{x}^{2}L_{\xi}^{2})}\\
\lesssim & |T|^{\frac{1}{2}}\n{\lra{\nabla_{\xi}}^{1+\ga}U(t)\wt{f}_{0}}_{L_{t}^{\infty}(0,T;L_{x}^{6}L_{\xi}^{2})}
\n{\wt{g}_{0}}_{H_{x}^{s}H_{\xi}^{r}}
\n{\lra{\nabla_{x}}^{-s}\lra{\nabla_{\xi}}^{-r}h}_{L_{t}^{\infty}(0,T;L_{x}^{2}L_{\xi}^{2})}\\
\lesssim&|T|^{\frac{1}{2}}\n{\lra{v}^{1+\ga}f_{0}}_{L_{v}^{2}L_{x}^{6}}
\n{\lra{\nabla_{x}}^{s}\lra{v}^{r}g_{0}}_{L_{x,v}^{2}}\n{\lra{\nabla_{x}}^{-s}\lra{\nabla_{\xi}}^{-r}h}_{L_{t}^{\infty}(0,T;L_{x}^{2}L_{\xi}^{2})}.
\end{align*}
Thus, we complete the proof of \eqref{equ:bilinear estimate,gain term,regularity bound,xi,duality} for Case B.

\end{proof}

\noindent \textbf{Acknowledgements}
The authors would like to thank Yan Guo for many encouragements and delightful discussions regarding to this work.
X. Chen was supported in part by NSF grant DMS-2005469 and a Simons fellowship numbered 916862, S. Shen was supported in part by the Postdoctoral Science Foundation of China under Grant 2022M720263, and Z. Zhang was supported in part by NSF of China under Grant 12171010 and 12288101.

\appendix

\section{Sobolev-type and Strichartz Estimates}\label{section:Strichartz Estimates}

\begin{lemma}[Fractional Leibniz rule, {\cite{gulisashvili1996exact}}]\label{lemma:generalized leibniz rule}
Suppose $1<r<\infty$, $s\geq 0$ and $\frac{1}{r}=\frac{1}{p_{i}}+\frac{1}{q_{i}}$ with $i=1,2$, $1<q_{1}\leq \infty$, $1<p_{2}\leq \infty$.
Then
\begin{align}
\n{\lra{\nabla_{x}}^{s}(fg)}_{L^{r}}\leq C\n{\lra{\nabla_{x}}^{s}f}_{L^{p_{1}}}\n{g}_{L^{q_{1}}}+\n{f}_{L^{p_{2}}}
\n{\lra{\nabla_{x}}^{s}g}_{L^{q_{2}}}
\end{align}
where the constant $C$ depends on all of the parameters but not on $f$ and $g$.
\end{lemma}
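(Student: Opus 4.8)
The statement is a Kato--Ponce type fractional Leibniz inequality, and the plan is to prove it by a Littlewood--Paley paraproduct analysis. First I would reduce the inhomogeneous estimate to the homogeneous one: writing $\lra{\xi}^{s}=(1+|\xi|^{s})\,m(\xi)$ with $m(\xi):=\lra{\xi}^{s}(1+|\xi|^{s})^{-1}$, one checks (using $s\ge 0$) that $m$ is a Mikhlin--H\"ormander multiplier, so $m(\nabla_{x})$ is bounded on $L^{r}$ for $1<r<\infty$ and hence
\begin{align*}
\n{\lra{\nabla_{x}}^{s}(fg)}_{L^{r}}\lesssim \n{fg}_{L^{r}}+\n{|\nabla_{x}|^{s}(fg)}_{L^{r}}.
\end{align*}
The first term is handled by H\"older, $\n{fg}_{L^{r}}\le\n{f}_{L^{p_{i}}}\n{g}_{L^{q_{i}}}$ for $i=1,2$, so everything reduces to the homogeneous estimate for $|\nabla_{x}|^{s}(fg)$, and one may assume $s>0$ (the case $s=0$ being merely H\"older).

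Next I would apply Bony's decomposition $fg=\Pi_{1}(f,g)+\Pi_{2}(f,g)+\Pi_{3}(f,g)$, where $\Pi_{1}(f,g)=\sum_{j}S_{j-1}f\,\Delta_{j}g$ (low--high), $\Pi_{2}(f,g)=\sum_{j}\Delta_{j}f\,S_{j-1}g$ (high--low), and $\Pi_{3}(f,g)=\sum_{j}\Delta_{j}f\,\widetilde{\Delta}_{j}g$ (resonant), with $\Delta_{j}$ the Littlewood--Paley blocks and $S_{j-1}$ the corresponding low-frequency cutoff. For $\Pi_{1}$, the $j$-th summand has Fourier support in an annulus of size $2^{j}$, so $\babs{|\nabla_{x}|^{s}(S_{j-1}f\,\Delta_{j}g)}\lesssim 2^{js}\mathcal{M}(S_{j-1}f\,\Delta_{j}g)$ with $\mathcal{M}$ the Hardy--Littlewood maximal operator; then the Littlewood--Paley square-function inequality, the Fefferman--Stein vector-valued maximal inequality, the bound $|S_{j-1}f|\lesssim\mathcal{M}f$, and H\"older give
\begin{align*}
\n{|\nabla_{x}|^{s}\Pi_{1}(f,g)}_{L^{r}}\lesssim\bbn{\lrs{\sum_{j}4^{js}\babs{S_{j-1}f\,\Delta_{j}g}^{2}}^{1/2}}_{L^{r}}\lesssim \n{f}_{L^{p_{2}}}\bbn{\lrs{\sum_{j}4^{js}\babs{\Delta_{j}g}^{2}}^{1/2}}_{L^{q_{2}}}\lesssim\n{f}_{L^{p_{2}}}\n{|\nabla_{x}|^{s}g}_{L^{q_{2}}},
\end{align*}
where the step $|S_{j-1}f|\lesssim\mathcal{M}f$ uses $1<p_{2}\le\infty$ (for $p_{2}=\infty$ one simply uses $|S_{j-1}f|\le\n{f}_{L^{\infty}}$). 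The term $\Pi_{2}$ is symmetric and produces $\n{|\nabla_{x}|^{s}f}_{L^{p_{1}}}\n{g}_{L^{q_{1}}}$, now invoking $1<q_{1}\le\infty$.

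The resonant term $\Pi_{3}$ is the main obstacle, because its $j$-th summand $\Delta_{j}f\,\widetilde{\Delta}_{j}g$ is Fourier-supported only in a ball of radius $\sim2^{j}$, not an annulus, so the square-function characterization does not apply directly. I would decompose by output frequency, $|\nabla_{x}|^{s}\Pi_{3}(f,g)=\sum_{l}|\nabla_{x}|^{s}\Delta_{l}\sum_{j\gtrsim l}\Delta_{j}f\,\widetilde{\Delta}_{j}g$; a Cauchy--Schwarz in the summation index (legitimate precisely because $s>0$ makes $\sum_{l\le j}2^{\theta l s}$ converge for a suitable $\theta\in(0,1)$) lets one place the weight $2^{js}$ on the $j$-scale, and estimating $\Delta_{l}$ of a product pointwise by the maximal function reduces matters to $\n{(\sum_{j}4^{js}|\mathcal{M}(\Delta_{j}f\,\widetilde{\Delta}_{j}g)|^{2})^{1/2}}_{L^{r}}$; the Fefferman--Stein inequality, the bound $|\widetilde{\Delta}_{j}g|\lesssim\mathcal{M}g$, and H\"older then control this by $\n{|\nabla_{x}|^{s}f}_{L^{p_{1}}}\n{g}_{L^{q_{1}}}$ (or, by symmetry, by $\n{f}_{L^{p_{2}}}\n{|\nabla_{x}|^{s}g}_{L^{q_{2}}}$). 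Summing the three pieces yields the claimed inequality. The only nuisances are the bookkeeping at the endpoint exponents $q_{1}=\infty$ or $p_{2}=\infty$, where the corresponding maximal function is replaced by the trivial $L^{\infty}$ bound, and the choice of the auxiliary exponent $\theta$ in the resonant estimate; since $1<r<\infty$, the vector-valued maximal and square-function tools all apply without restriction. Alternatively, this is a classical inequality and one may simply cite \cite{gulisashvili1996exact}.
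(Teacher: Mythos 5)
The paper does not actually prove this lemma: it is recalled in the appendix as a known Kato--Ponce-type inequality and attributed to \cite{gulisashvili1996exact}, so there is no in-paper argument to compare with. Your proposal supplies the standard self-contained proof (reduction of $\lra{\nabla_{x}}^{s}$ to $|\nabla_{x}|^{s}$ via a Mikhlin multiplier, Bony paraproduct decomposition, the pointwise bound $||\nabla_{x}|^{s}F|\lesssim 2^{js}\mathcal{M}F$ for annulus-localized $F$, square functions and Fefferman--Stein), and for the exponent ranges the paper actually uses (all four exponents finite, or only the factor \emph{without} the derivative placed in $L^{\infty}$, as in the $Q^{-}=fA[f]$ estimates) your argument is correct and complete modulo routine details. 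What the citation buys the authors is brevity; what your proof buys is a self-contained appendix, at the cost of the bookkeeping you describe.

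One caveat deserves to be named, because the lemma as literally stated allows it: the hypotheses only constrain $q_{1}$ and $p_{2}$, so nothing forbids $q_{1}=r$ (hence $p_{1}=\infty$) or $p_{2}=r$ (hence $q_{2}=\infty$), i.e.\ the factor \emph{carrying} the derivative measured in $L^{\infty}$. At that endpoint your scheme breaks at the step $\n{(\sum_{j}4^{js}|\Delta_{j}g|^{2})^{1/2}}_{L^{q_{2}}}\lesssim \n{|\nabla_{x}|^{s}g}_{L^{q_{2}}}$ (and its mirror for $\Pi_{2}$ with $p_{1}$): the Littlewood--Paley square-function characterization fails at $L^{\infty}$, since $\lra{\nabla_{x}}^{s}g\in L^{\infty}$ only gives the $B^{s}_{\infty,\infty}$-type bound $\sup_{j}2^{js}\n{\Delta_{j}g}_{L^{\infty}}<\infty$, not the $\ell^{2}$ sum. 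You flag the endpoints $q_{1}=\infty$, $p_{2}=\infty$, which are the easy ones (replace $\mathcal{M}$ by the trivial sup bound), but not these harder ones; handling them is precisely the delicate part of the $L^{\infty}$-endpoint Kato--Ponce inequality in the literature (Grafakos--Oh, and the cited Gulisashvili--Kon). Since the paper never invokes the lemma with the derivative factor in $L^{\infty}$, this does not affect anything downstream, but if you intend your proof to cover the statement as written you should either add the extra argument for those endpoints or restrict $p_{1},q_{2}<\infty$ in the statement.
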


Recall the abstract Strichartz estimates.
\begin{theorem}[{\cite[Theorem 1.2]{keel1998endpoint}}]\label{lemma:strichartz estimate,keel-tao}
Suppose that for each time
$t$ we have an operator $U(t)$ such that
\begin{align*}
\n{U(t)f}_{L_{x}^{2}}\lesssim& \n{f}_{L_{x}^{2}},\\
\n{U(t)(U(s))^{*}f}_{L_{x}^{\infty}}\lesssim& |t-s|^{-\sigma}\n{f}_{L_{x}^{1}}.
\end{align*}
Then it holds that
\begin{align}\label{equ:strichartz,usual one}
\n{U(t)f}_{L_{t}^{q}L_{x}^{p}}\lesssim \n{f}_{L_{x}^{2}},
\end{align}
for all sharp $\sigma$-admissible exponent pair that
\begin{align}
\frac{2}{q}+\frac{2\sigma}{p}=\sigma,\quad q\geq 2,\ \sigma>1.
\end{align}

\end{theorem}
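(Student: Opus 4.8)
The plan is to follow the standard $TT^{*}$ method. Writing $T\colon f\mapsto U(t)f$, the estimate \eqref{equ:strichartz,usual one} is, by duality, equivalent to $\n{T^{*}}_{L_{t}^{q'}L_{x}^{p'}\to L_{x}^{2}}\lesssim 1$, hence to the boundedness of $TT^{*}$, and after polarization to the bilinear bound
\begin{align*}
\bbabs{\int_{\R}\int_{\R}\blra{U(s)^{*}G(s),\,U(t)^{*}F(t)}\,ds\,dt}\lesssim \n{F}_{L_{t}^{q'}L_{x}^{p'}}\n{G}_{L_{t}^{q'}L_{x}^{p'}}.
\end{align*}
So the first step is this reduction, which is purely formal; the genuine content lies in proving the bilinear estimate.

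Next I would interpolate the two hypotheses: Riesz--Thorin applied to $\n{U(t)U(s)^{*}}_{L_{x}^{2}\to L_{x}^{2}}\lesssim 1$ and $\n{U(t)U(s)^{*}}_{L_{x}^{1}\to L_{x}^{\infty}}\lesssim |t-s|^{-\sigma}$ gives $\n{U(t)U(s)^{*}}_{L_{x}^{p'}\to L_{x}^{p}}\lesssim |t-s|^{-\sigma(1-2/p)}$ for $2\le p\le\infty$. The admissibility relation $\frac{2}{q}+\frac{2\sigma}{p}=\sigma$ is precisely $\sigma(1-\tfrac{2}{p})=\tfrac{2}{q}$, so the time-decay of the kernel is matched to the Lebesgue exponent $q$. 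When $q>2$ the exponent $\tfrac{2}{q}<1$, so $h\mapsto\int|t-s|^{-2/q}h(s)\,ds$ is a fractional integration operator on $\R$, and the Hardy--Littlewood--Sobolev inequality in the time variable turns the pointwise bound $\n{U(t)U(s)^{*}G(s)}_{L_{x}^{p}}\lesssim|t-s|^{-2/q}\n{G(s)}_{L_{x}^{p'}}$ into the desired $L_{t}^{q}L_{x}^{p}$ estimate. This disposes of the non-endpoint case routinely.

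The hard part will be the endpoint pair $q=2$, $p=\frac{2\sigma}{\sigma-1}$, where $\frac{2}{q}=1$ and the kernel $|t-s|^{-1}$ fails to be locally integrable, so the Hardy--Littlewood--Sobolev inequality is unavailable. Here I would run the Keel--Tao argument: decompose the bilinear form dyadically in the time separation, $\sum_{j\in\Z}B_{j}(F,G)$ with $B_{j}$ restricted to $|t-s|\sim 2^{j}$; interpolating on each piece between the crude $L^{2}$ bound (using only $\n{U(t)U(s)^{*}}_{L_{x}^{2}\to L_{x}^{2}}\lesssim 1$) and the dispersive bound yields $|B_{j}(F,G)|\lesssim 2^{-j\beta(a,b)}\n{F}_{L_{t}^{2}L_{x}^{a'}}\n{G}_{L_{t}^{2}L_{x}^{b'}}$ for $(a,b)$ in a small neighbourhood of the diagonal point $(p,p)$, with $\beta(a,b)>0$ off that point and $\beta(p,p)=0$. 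The $j$-sum converges geometrically away from the endpoint but diverges there; the resolution is the bilinear real-interpolation lemma of Keel--Tao, which sums the dyadic pieces at the endpoint itself at the cost of the Lorentz refinement $L_{x}^{p,2}$, and the embedding $L_{x}^{p,2}\hookrightarrow L_{x}^{p}$, valid because $p\ge 2$, then gives \eqref{equ:strichartz,usual one}. This endpoint step is where the hypothesis $\sigma>1$ is genuinely used --- for $\sigma=1$ the endpoint is known to fail for some abstract propagators. For the application in this paper, $U(t)=e^{it\nabla_{\xi}\cdot\nabla_{x}}$ on $\R_{x}^{3}\times\R_{\xi}^{3}$ satisfies the dispersive estimate with $\sigma=3$, so the endpoint $(q,p)=(2,6)$ --- the relevant one in Proposition \ref{lemma:strichartz estimate,U-V} --- is included.
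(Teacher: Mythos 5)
The paper offers no proof of this statement at all -- it is quoted verbatim as \cite[Theorem 1.2]{keel1998endpoint} -- and your outline is precisely the Keel--Tao argument that the citation invokes: the $TT^{*}$ reduction, Riesz--Thorin interpolation of the energy and dispersive bounds to get $\n{U(t)U(s)^{*}}_{L^{p'}\to L^{p}}\lesssim|t-s|^{-2/q}$, Hardy--Littlewood--Sobolev in time for $q>2$, and the dyadic-in-$|t-s|$ decomposition with the bilinear real-interpolation summation (and the Lorentz refinement $L^{p,2}\hookrightarrow L^{p}$) at the endpoint $q=2$, with $\sigma>1$ entering exactly where you say. So the proposal is correct and takes the same route the paper relies on.

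One slip in your closing remark about the application: with $U(t)=e^{it\nabla_{\xi}\cdot\nabla_{x}}$ on $\R_{x}^{3}\times\R_{\xi}^{3}$ one has $\sigma=3$, so the endpoint pair is $(q,p)=\bigl(2,\tfrac{2\sigma}{\sigma-1}\bigr)=(2,3)$, not $(2,6)$; indeed $(2,6)$ does not satisfy $\tfrac{2}{q}+\tfrac{6}{p}=3$ at all. The pair actually used throughout the paper's bilinear estimates is $L_{t}^{2}L_{x,\xi}^{3}$, so the endpoint case is genuinely needed, but it is $(2,3)$.
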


The symmetric hyperbolic Schr\"{o}dinger equation is
\begin{equation}
\left\{
\begin{aligned}
i\pa_{t}\phi+\nabla_{\xi}\cdot \nabla_{x}\phi=&0,\\
\phi(0)=&\phi_{0}.
\end{aligned}
\right.
\end{equation}
Note that the linear propagator $U(t)=e^{it\nabla_{\xi}\cdot \nabla_{x}}$ satisfies the energy and dispersive estimates
\begin{equation}
\begin{aligned}
&\n{e^{it\nabla_{\xi}\cdot\nabla_{x}}\phi_{0}}_{L_{x\xi}^{2}}\lesssim \n{\phi_{0}}_{L_{x\xi}^{2}},\\
&\n{e^{it\nabla_{\xi}\cdot\nabla_{x}}\phi_{0}}_{L_{x\xi}^{\infty}}\lesssim t^{-3}\n{\phi_{0}}_{L_{x\xi}^{1}}.
\end{aligned}
\end{equation}
Then by Theorem \ref{lemma:strichartz estimate,keel-tao}, this gives the Strichartz estimate that
\begin{align}\label{equ:strichartz estimate,linear}
\n{e^{it\nabla_{\xi}\cdot \nabla_{x}}\phi_{0}}_{L_{t}^{q}L_{x\xi}^{p}}\lesssim \n{\phi_{0}}_{L_{x\xi}^{2}},\quad \frac{2}{q}+\frac{6}{p}=3,\quad q\geq 2.
\end{align}

\bibliographystyle{abbrv}
\bibliography{references}

\end{document}